\pgfplotsset{compat=1.18} 
\newtheorem{theorem}{Theorem}
\newtheorem{proposition}[theorem]{Proposition}
\newtheorem{corollary}[theorem]{Corollary}
\theoremstyle{definition}
\newtheorem{definition}[theorem]{Definition}
\newtheorem{lemma}[theorem]{Lemma}
\theoremstyle{remark}
\newtheorem{remark}[theorem]{Remark}
\numberwithin{theorem}{section}
\numberwithin{equation}{section}
\numberwithin{table}{section}
\numberwithin{figure}{section}
\definecolor{lgray}{RGB}{200,200,200} 
\newcommand\C{\mathbb C}
\newcommand\D{\mathbb D}
\newcommand\N{\mathbb N}
\newcommand\R{\mathbb R}
\newcommand{\SSS}{\mathbb{S}}
\newcommand\Sphere{\mathcal{S}}
\DeclareSymbolFont{stmry}{U}{stmry}{m}{n}
\DeclareMathAlphabet{\mathpzc}{OT1}{pzc}{m}{it}
\newcommand\OB{\mathcal{O}\mathcal{B}_N^{\,\mathbb{C}}(p,\Hsp)}
\newcommand\DpC{\mathbb{D}(p,\C)}
\newcommand\DpR{\mathbb{D}(p,\R)}
\newcommand\DpS{\mathbb{D}(p,\SSS)}
\newcommand\Seg{\mathcal{M}}
\newcommand\Horg[2]{\mathcal{H}_{#1}^{#2}}
\newcommand{\DC}{\mathcal{D},\mathbb{C}}
\newcommand{\re}{\mbox{\rm Re}}
\newcommand{\sR}{\mbox{\rm \tiny R}}
\newcommand{\nablaR}{\nabla^{\hspace{0pt}\sR}}
\newcommand{\VR}{V^{\hspace{0pt}\sR}}
\DeclareMathOperator*{\argmin}{arg\,min}
\DeclareMathOperator*\myinf{\vphantom{p}inf}
\DeclareMathOperator{\diag}{diag}
\DeclareMathOperator{\Diag}{Diag}
\DeclareMathOperator{\dist}{dist}
\DeclareMathOperator{\grad}{grad}
\DeclareMathOperator{\Drm}{D}
\newcommand{\spot}{\mathcal{L}_3}
\DeclareMathOperator{\trace}{trace}
\newcommand{\hrm}{{\rm h}}
\newcommand\calA{\mathcal A}
\newcommand\calB{\mathcal B}
\newcommand\calD{\mathcal D}
\newcommand\calE{\mathcal E}
\newcommand\calF{\mathcal F}
\newcommand\calG{\mathcal G}
\newcommand\calH{\mathcal H}
\newcommand\calI{\mathcal I}
\newcommand\calL{\mathcal L}
\newcommand\calM{\mathcal M}
\newcommand\calN{\mathcal N}
\newcommand\calO{\mathcal O}
\newcommand\calP{\mathcal P}
\newcommand\calR{\mathcal R}
\newcommand\calS{\mathcal S}
\newcommand\calV{\mathcal V}
\newcommand\DDEjj{\Drm^2_{\varphi_j\varphi_j}\!\calE}
\newcommand\DDLjj{\Drm^2_{\varphi_j\varphi_j}\!\calL}
\newcommand{\ci}{\mathrm{i}} 
\newcommand{\gbf}{\bm{g}}
\newcommand{\ybf}{\bm{y}}
\newcommand{\ubf}{\bm{u}}
\newcommand{\vbf}{\bm{v}}
\newcommand{\wbf}{\bm{w}}
\newcommand{\zbf}{\bm{z}}
\newcommand{\nubf}{{\bm \nu}}
\newcommand{\xibf}{{\bm \xi}}
\newcommand{\zetabf}{{\bm \zeta}}
\newcommand{\varthetabf}{{\bm \vartheta}}
\newcommand{\varphibf}{{\bm \varphi}}
\newcommand{\psibf}{{\bm \psi}}
\newcommand{\zerobf}{\bm{0}}
\newcommand{\V}{V}
\newcommand{\Hsp}{H}
\newcommand{\Lsp}{L}
\def\drm{\rm d}
\def\dx{\,\text{d}x}
\newsavebox{\@brx}
\newcommand{\llangle}[1][]{\savebox{\@brx}{\(\m@th{#1\langle}\)}%
  \mathopen{\copy\@brx\mkern2mu\kern-0.9\wd\@brx\usebox{\@brx}}}
\newcommand{\rrangle}[1][]{\savebox{\@brx}{\(\m@th{#1\rangle}\)}%
  \mathclose{\copy\@brx\mkern2mu\kern-0.9\wd\@brx\usebox{\@brx}}}
\newcommand{\coout}[2]{\llangle{#1},{#2}\rrangle}
\newcommand{\out}[2]{\llbracket{#1},{#2}\rrbracket}
\newcommand{\proofbox}{\vbox{\hrule height0.6pt\hbox{\vrule height1.3ex width0.6pt\hskip0.8ex\vrule width0.6pt}\hrule height0.6pt}}
\title[Riemannian Optimization for Rotating Multicomponent BEC]{Qualitative and Quantitative Analysis of Riemannian Optimization Methods for Ground States of Rotating Multicomponent Bose-Einstein Condensates}
\author[M. Hermann]{Martin Hermann}
\address[M. Hermann]{Institut f\"ur Mathematik, Universit\"at Augsburg, Universit\"atsstra{\ss}e~12a, 86159 Augsburg, Germany}
\email{martin.hermann@uni-a.de}
\author[T. Stykel]{Tatjana Stykel}
\address[T. Stykel]{Institut f\"ur Mathematik \& Centre for Advanced Analytics and Predictive Sciences (CAAPS), Universit\"at Augsburg, Universit\"atsstra{\ss}e~12a, 86159 Augsburg, Germany}
\email{tatjana.stykel@uni-a.de}
\author[M. Yadav]{Mahima Yadav}
\address[M. Yadav]{Fakultät für Mathematik, Ruhr-Universität Bochum, Universitätsstraße 150, 44780 Bochum, Germany}
\email{mahima.yadav@rub.de}
\thanks{The work of M.~Hermann and T.~Stykel is part of a project that has received funding from the German Research Foundation – Project number 564828373.}
\begin{document}
\begin{abstract}
We develop and analyze Riemannian optimization methods for computing ground states of rotating multicomponent Bose-Einstein condensates, defined as minimizers of the Gross– Pitaevskii energy functional. To resolve the non-uniqueness of ground states induced by phase invariance, we work on a quotient manifold endowed with a general Riemannian metric. By introducing an~auxiliary phase-aligned iteration and employing fixed-point convergence theory, we establish a unified local convergence framework for Riemannian gradient descent methods and derive explicit convergence rates. Specializing this framework to two metrics tailored to the energy landscape, we study the energy-adaptive and Lagrangian-based Riemannian gradient descent methods. While monotone energy decay and global convergence are established only for the former, a~quantified local convergence analysis is provided for both methods. Numerical experiments confirm the theoretical results and demonstrate that the Lagrangian-based method, which incorporates second-order information on the energy functional and mass constraints, achieves faster local convergence than the energy-adaptive scheme. 
\end{abstract}
\maketitle
{\small {\bf Key words.} Rotating multicomponent Bose-Einstein condensates, coupled Gross-Pitaevskii eigenvalue problem, Riemannian optimization, energy-adaptive methods}\\[3mm]
\indent
{\small {\bf AMS subject classifications.}  65K10, 65N25, 81Q10, 35Q55, 65N12}

%
%
%
\section{Introduction}\label{sec:introduction}
In Bose–Einstein condensates (BECs), quantized vortices are a distinctive signature of superfluidity that appear when the condensate is subjected to rotation. A particularly interesting scenario arises when different species of particles or different hyperfine states of the same type of particles coherently fuse into a~single quantum state, called a~\emph{multicomponent} BEC. The first experiments demonstrating the formation of two-component BECs have been reported in~\cite{MyaBGCW97,SteISMCK98}.  In this paper, we are concerned with the numerical approximation of ground states of {\it rotating} condensates comprising multiple interacting components, each representing a~different atomic or molecular species.

For a rotating $p$-component BEC confined to a~domain $\calD\subset \R^d$ with $d=2,3$, every quantum state $\varphibf = (\varphi_1, \ldots, \varphi_p) \in [H^1_0(\mathcal{D}, \mathbb{C})]^p$ can be associated with an~energy which depends on trapping potentials $V_1, \ldots,V_p$, 
rotational frequencies $\Omega_1, \ldots, \Omega_p \in \mathbb{R}$, and interaction parameters $\kappa_{ij} \in \R$. Accordingly, the Gross-Pitaevskii energy functional $\calE: [H^1_0(\mathcal{D}, \mathbb{C}]^p \rightarrow \mathbb{R}$ has the form
\begin{equation}\label{eq:energy}
\calE(\varphibf) 
= \sum_{j=1}^p \int_\calD \frac{1}{2}\, \|\nabla\varphi_j\|^2 
+ \frac{1}{2}\, \V_j(x)\, |\varphi_j|^2  - \frac{1}{2}\Omega_j\overline{\varphi}_j\spot \varphi_j
+ \frac{1}{4}\, \rho_j(\varphibf) \,|\varphi_j|^2 \, \dx,
\end{equation}
 where $\spot=-\ci (x_1\partial_{x_2}-x_2\partial_{x_1})$ denotes the $x_3$-component of the angular momentum, $\ci=\sqrt{-1}$, and
\[
\rho_j(\varphibf)=\sum_{i=1}^p \kappa_{ij} |\varphi_i|^2, \qquad j=1,\ldots,p,
\]
are linear combinations of the density functions $|\varphi_i|^2$. The parameters $\kappa_{ij}$ reflect the nature and strength of particle interactions between the $i$-th and $j$-th condensate components. These interactions can be either attractive ($\kappa_{ij} <0$) or repulsive ($\kappa_{ij} >0$). The \emph{ground states} of the system are defined as global minimizers of the energy functional $\,\calE$ subject to mass constraints
\begin{equation}\label{eq:mass}
\|\varphi_j\|_{L^2(\mathcal{D},\C)}^2
    := \int_\calD |\varphi_j|^2\,\dx
    = N_j, \qquad j=1,\ldots,p,
\end{equation}
which ensure that each condensate component contains a~prescribed number of particles $N_j$, with the total number of particles given by $N_1+\ldots+N_p$. The ground states represent the most stable stationary configurations of the multicomponent BEC system. 
  
The numerical approximation of ground states relies on two fundamental aspects: the construction of a discrete space and the choice of an optimization method. Although spatial discretization plays a~critical role, the present work focuses on the approximation of ground states from the perspective of optimization, rather than discussing the discretization techniques. In the existing literature, a wide range of numerical methods have been developed to compute ground states in the single-component setting. These approaches are generally built on either minimizing the associated energy functional or exploiting the eigenvalue formulation of the problem. Among the prominent techniques are gradient flows~\cite{BaoD04,BWM05,CDLX23}, 
Riemannian optimization methods in discrete and continuous settings~\cite{AHYY26,AltPS24,AntLT17,DanP17}, and
variants of Sobolev gradient methods, including standard and projected versions~\cite{AntLT17,ChenLLZ24,DanK10,DanP17,HenP20,HenY25,KaE10}.

Non-rotating multicomponent BECs have been extensively studied in \cite{AHPS25,Bao04,BaoC13,CalORT09,HuaY24}. However, including the rotation 
terms into these systems introduces significant analytical challenges, most notably the transition from a~purely real-valued framework to a~complex-valued one, and the resulting non-uniqueness of ground states arising from phase and rotational symmetries. Although rare, numerical studies of the single-component case (cf. \cite[Sect.~6.2]{BWM05}) suggest that multiple ground states with different numbers of vortices may exist at certain critical frequencies. While rotating condensates composed of a~single species have been thoroughly investigated in \cite{FenT25,HHSW24,HenY25}, multicomponent BECs in the rotating regime have received comparatively less attention \cite{AntD14, ZhaDTDCZ14}. The analysis of such complex systems becomes considerably more involved, both analytically and numerically, primarily due to the intricate interplay between rotational effects and inter-component interactions. 

To address the non-uniqueness of ground states stemming from the phase invariance of the Gross-Pitaevskii energy functional~$\,\calE$, we develop a~theoretical framework which is based on the construction of quotient spaces, obtained by identifying elements that are equivalent under the action of the rotational symmetry group. By formulating the minimization problem on the corresponding quotient manifold, we effectively eliminate the degeneracy induced by rotational symmetry. Endowing this manifold with a~general Riemannian metric, we develop a Riemannian gradient descent method (RGD) in a~general setting. For metrics satisfying certain natural structural conditions, we establish a local convergence of this method and derive explicit linear contraction rates for the iterations. The proof carefully accounts for the phase invariance of the energy functional and utilizes the non-singularity of the Lagrangian Hessian on the horizontal space at a ground state, ensured by its local quasi-uniqueness, known also as the Morse–Bott condition~\cite{FenT25}. For quantitative analysis, we introduce an~auxiliary iteration that artificially adjusts the phase at each step to ensure that the iterates remain aligned with the phase of the limiting ground state. The local convergence of these auxiliary iterates can then be analyzed using an~abstract fixed point convergence theory, in particular Ostrowski’s theorem \cite{Ost66, Shi81}, and transferred back to the original RGD scheme. 

Building on this general result, we then specialize to two particular choices of metrics: the energy-adaptive metric and the Lagrangian-based metric. The energy-adaptive Riemannian metric, first introduced in \cite{HenP20}, is designed to reflect the geometry induced by the underlying energy functional, thereby enhancing the stability and efficiency of the energy-adaptive Riemannian gradient descent method (eaRGD). This scheme ensures monotone energy decay and global convergence of the iterates, while our general contraction framework yields a quantitative result on local linear convergence near a~ground state under the assumption of local quasi-uniqueness. We further extend the Lagrangian-based Riemannian gradient descent method (LagrRGD), developed in~\cite{AHPS25} for non-rotating multicomponent condensates, to the case of rotating systems. Our analysis provides a~rigorous local linear convergence result for LagrRGD in both rotating and non-rotating multicomponent settings. Thanks to the incorporation of second-order information on the energy functional and mass constraints, LagrRGD exhibits superior local convergence properties compared to eaRGD, as confirmed by numerical experiments. 

The paper is organized as follows.
In Section~\ref{sec:setting}, we introduce the functional analytical framework for the constrained energy minimization problem and establish the existence of a ground state. 
In Section~\ref{sec:derivatives}, we analyze the properties of the first- and second-order derivatives of the energy functional and relate the energy minimization problem to a~nonlinear eigenvector problem. 
Section~\ref{sec:geometry} focuses on the geometric framework, exploring the decomposition of the tangent space of the gene\-ralized oblique manifold.
Section~\ref{sec:uniqueness} reformulates the minimization problem on the quotient manifold and addresses the local quasi-uniqueness of ground states. 
In Section~\ref{sec:RGD_loc}, we present the RGD method induced by a~general Riemannian metric and establish its local convergence properties. By introducing the energy-adaptive and Lagrangian-based  metrics, we also analyze the associated RGD schemes. 
Section~\ref{sec:numerics} contains results from numerical experiments that validate the theoretical findings. Concluding remarks are presented in Section~\ref{sec:conclusion}.

\textbf{Notation.}
We denote by $\R$ the set of real numbers, by $\C$ the set of complex numbers, and by $\SSS=\{e^{\ci\alpha}, \,\alpha\in \R\}$ the unit circle in $\C$. 
For $p\in\mathbb{N}$ and $S\in\{\R,\C,\SSS\}$, a~set of $p\times p$ diagonal matrices with diagonal entries in~$S$ is denoted by $\D(p,S)$. 
The trace of a matrix $M\in\C^{p\times p}$ is denoted by $\trace M$, and $\diag(v)$ stands for the diagonal matrix with components of $v\in\C^p$ on the diagonal. In addition, we denote by $I_p$ and $0_p$ the $p\times p$ identity and zero matrices, respectively. The Euclidean vector norm is denoted by $\|\cdot\|$, and the spectral matrix norm is denoted by $\|\cdot\|_2$. 
%
%
\section{Constrained energy minimization and existence of a ground state} \label{sec:setting}
In this section, we formulate the constrained energy minimization problem characterizing a~ground state of rotating multicomponent BECs and establish its existence under appropriate assumptions on the model parameters.

%
%
\subsection{Mathematical model}\label{sec:setting:energy}
Let $\mathcal{D} \subset\R^d$ with $d=2,3$ be a~bounded convex Lipschitz domain. We consider the Lebesgue space $L^2(\mathcal{D},\C)$ and the Sobolev space $H_0^1(\mathcal{D}, \C)$ over the field of real numbers $\R$  with the following real inner products 
\[
(v, w)_{L^2} 
    = \re \big( \int_{\mathcal{D}} v \, \overline{w} \,\dx \big) 
\qquad \text{and} \qquad
(v, w)_{H_0^1} 
    = \re \big( \int_{\mathcal{D}} \nabla v \cdot  \overline{\nabla w} \,\dx \big),
\]  
 respectively. The consideration of the real spaces is motivated by the fact that the Gross-Pitaevskii energy functional $\,\calE$ in \eqref{eq:energy} evaluated at complex-valued wave functions takes real values. Occasionally, we also need the complex $L^2$-inner product $(v, w)_{L^2_{\C}} = \int_{\mathcal{D}} v \, \overline{w} \,\dx$. For $p\geq 1$, we define the Hilbert spaces $\Lsp=[L^2(\mathcal{D},\C)]^p$ and $\Hsp=[H_0^1(\mathcal{D},\C)]^p$ of $p$-frames. These spaces form a~Gelfand triple $\Hsp \subset \Lsp \subset \Hsp^\star$, where~$\Hsp^\star=[H^{-1}(\mathcal{D},\C)]^p$ is the dual space of $\Hsp$. 
On the pivot space $\Lsp$ and on $\Hsp$, we define the real inner products
\begin{equation*}
(\vbf, \wbf)_{\Lsp} 
    = \sum_{j=1}^p (v_j, w_j)_{L^2} 
\qquad \text{and} \qquad 
(\vbf, \wbf)_{\Hsp} 
    = \sum_{j=1}^p (v_j, w_j)_{H_0^1},
\end{equation*}
which induce the norms $\|\vbf\|_{\Lsp} = \sqrt{(\vbf, \vbf)_{\Lsp}}$ and $\|\vbf\|_{\Hsp} = \sqrt{(\vbf, \vbf)_{\Hsp}}$, respectively. 

For $\vbf,\wbf\in\Lsp$,  we further introduce the diagonal matrices 
\[
    \out{\vbf}{\wbf}_\C
    =\diag\big((v_1, w_1)_{L^2_{\C}},\ldots,(v_p, w_p)_{L^2_{\C}}\big) \in \DpC 
\]
 and $\out{\vbf}{\wbf}\! =\! \re \out{\vbf}{\wbf}_\C \in \DpR$.
Then the (real) inner product on $\Lsp$ reads $(\vbf, \wbf)_{\Lsp} = \trace\, \out{\vbf}{\wbf}$. For $\vbf^\star\in\Hsp^\star$ and $\wbf\in\Hsp$, we also define the diagonal matrix 
\[
\coout{\vbf^\star}{\wbf}
    =\diag\big(\langle v_1^\star, w_1\rangle_{H^{-1}\times H_0^1},\ldots,\langle v_p^\star, w_p\rangle_{H^{-1}\times H_0^1}\big) \in \DpR, 
\]
where $\langle\cdot,\cdot\rangle_{H^{-1}\times H_0^1}$ denotes the real duality pairing on the space $H^{-1}(\calD,\C)\times H^1_0(\calD,\C)$. Moreover, the real duality pairing on $\Hsp^\star\times\Hsp$ is defined as $\langle \vbf^\star,\wbf\rangle_{\Hsp^\star\times \Hsp}=\trace\coout{\vbf^\star}{\wbf}$. For brevity, the duality pairing for $p$-frames and their components will be denoted by $\langle\cdot,\cdot\rangle$.

Consider the energy functional $\calE\colon \Hsp \rightarrow \R$ defined in~\eqref{eq:energy}. 
Our goal is to compute a~{\em ground state}, which is a~global minimizer of $\,\calE$ subject to the mass constraints~\eqref{eq:mass}. By introducing the diagonal matrix $N = \diag(N_1, \ldots, N_p)$, we define the~admissible set 
\[
	\OB = \big\{ \varphibf \in \Hsp \,:\, \out{\varphibf}{\varphibf} = N \big\},
\]
which forms the \emph{infinite-dimensional complex-valued generalized oblique manifold}. Then the constrained energy minimization problem can be formulated as
\begin{equation}\label{eq:min}
	\min_{\varphibf\in\OB} \calE(\varphibf).
\end{equation} 

\begin{remark}[Phase invariance of $\calE$ and non-uniqueness of ground state]\label{rm:invariance}
The ener\-gy functional $\,\calE$ in \eqref{eq:energy} is phase invariant in the sense that $\calE(\varphibf \, \Theta) = \calE(\varphibf)$ for all $\varphibf \in \OB$ and $\Theta\in\DpS$. This immediately implies the non-uniqueness of the ground state, which we will discuss further in Section~\ref{sec:geometry}. 
\end{remark}

%
%
\subsection{Existence of a ground state}\label{sec:setting:groundstate}
We make the following assumptions on the model parameters $\V_j$, $\Omega_j$, and $\kappa_{ij}$: 
\vspace{0.4em}
\begin{itemize}[itemsep=0.2em]
\item[\bf{A1:}] For $j=1,\ldots,p$, the potentials satisfy $\V_j \in L^\infty(\calD)$ with $\V_j(x) \geq 0$ for almost all $x \in \mathcal{D}$.
\item[\bf{A2:}] For $j=1, \ldots, p$, there exists $\varepsilon_j >0$ such that 
\[
V_j(x)-\frac{1+\varepsilon_j}{4} \Omega^2_j (x_1^2 + x_2^2) \geq 0 \qquad \text{for almost all } \, x \in \mathcal{D}.
\]
\item[\bf{A3:}] The interaction matrix $K= [\kappa_{ij}]_{i,j=1}^{p,p}$ is symmetric and component-wise non-negative.
\end{itemize}
\vspace{0.4em}
These assumptions will enable us to prove the existence of a~ground state of the rotating multicomponent BEC, see Theorem~\ref{thm:existence} below. For this purpose, we reformulate the energy functional~$\,\calE$ to conveniently account for its boundedness from below. Let us first introduce a~modified potential $\VR=[\VR_1,\ldots,\VR_p]^T$ with $\VR_j= V_j - \tfrac{1}{4}\, \Omega^2_j\, \|\text{R}^T\|^2$, which is non-negative by Assumption~{\bf A2}, and the covariant gradient $\nablaR \vbf=[\nablaR v_1, \ldots, \nablaR v_p]$ with
$\nablaR v_j = \nabla v_j + \ci \tfrac{\Omega_j}{2} \text{R}^T v_j$, where
\begin{align*}
\text{R}(x) = \begin{cases}
(x_2,-x_1) & \mbox{for } d=2, \\
(x_2,-x_1,0) & \mbox{for } d=3.
\end{cases}
\end{align*}
Then Assumptions~{\bf A1} and~{\bf A2} allow us to equip $\Hsp$ with the $(\V,\Omega)$-dependent inner product 
\begin{equation}\label{R-inner-product}
(\vbf,\wbf)_{\sR} 
  = \sum_{j=1}^p \re \Big(  \int_{\calD} (\nablaR v_j)^T  \overline{\nablaR w_j}  
    +  \VR_j(x) v_j \overline{w_j} \,\dx  \Big)
\end{equation}
and the induced norm $\|\vbf\|_{\sR} = \sqrt{(\vbf, \vbf)_{\sR}}$, which is equivalent to the canonical norm $\|\cdot\|_{\Hsp}$ in the sense that there exist constants $c_1,c_2>0$ such that $c_1\|\vbf\|_{\Hsp}\leq\|\vbf\|_{\sR}\leq c_2 \|\vbf\|_{\Hsp}$. The norm equivalence immediately follows from \cite[Lem.~2.2]{DoeH23}.

With the inner product \eqref{R-inner-product}, the energy functional in \eqref{eq:energy} can be rewritten as
 \begin{equation}\label{eq:energy2}
 \calE(\varphibf) 
    = \frac {1}{2}\|\varphibf\|^2_{\sR} 
        + \frac{1}{4}\, \int_{\calD} (\varphibf\circ\overline{\varphibf})K(\varphibf\circ\overline{\varphibf})^T \, \dx,
 \end{equation}
where $\varphibf \circ \vbf = (\varphi_1 v_1, \ldots, \varphi_p v_p)$ is the Hadamard (component-\-wise) product of two $p$-frames. 
This shows that under Assumption~{\bf A3}, $\calE(\varphibf)\geq 0$ for all $\varphibf\in\Hsp$. Combined with the weak lower semi-continuity of $\,\calE$, this property is essential for establishing the existence of ground states, which can be proved analogously to \cite[Th.~2.4]{AHPS25} by adapting its proof to the inner product \eqref{R-inner-product}. 

\begin{theorem}[Existence of a~ground state]
\label{thm:existence}
Let Assumptions~{\bf A1}--{\bf A3} be fulfilled. Then there exists a~ground state $\varphibf_* \in \OB$ which is a~global minimizer of the constrained minimization problem \eqref{eq:min}. 
\end{theorem}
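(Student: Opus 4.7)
The plan is to use the direct method of the calculus of variations, built on the reformulation \eqref{eq:energy2} of the energy. Under Assumption~\textbf{A3} the matrix $K$ is component-wise non-negative, so $(\varphibf\circ\overline{\varphibf})\,K\,(\varphibf\circ\overline{\varphibf})^T \geq 0$ pointwise, and therefore $\calE(\varphibf)\geq \tfrac{1}{2}\|\varphibf\|_{\sR}^2 \geq 0$ for every $\varphibf\in\Hsp$. In particular, $\calE_\star := \inf_{\varphibf\in\OB}\calE(\varphibf)$ is finite (the set $\OB$ is non-empty, e.g.\ by component-wise rescaling any smooth non-zero $p$-frame so as to meet the mass constraint). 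I would then pick a minimizing sequence $(\varphibf^{(k)})_k\subset\OB$ with $\calE(\varphibf^{(k)})\to\calE_\star$. The bound $\tfrac{1}{2}\|\varphibf^{(k)}\|_{\sR}^2\leq\calE(\varphibf^{(k)})$ together with the equivalence between $\|\cdot\|_{\sR}$ and $\|\cdot\|_{\Hsp}$ yields boundedness of $(\varphibf^{(k)})$ in the reflexive space $\Hsp$, so a subsequence (not relabelled) converges weakly to some $\varphibf_\star\in\Hsp$.

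Next I would pass to the limit. By the Rellich--Kondrachov theorem applied component-wise, the embedding $\Hsp\hookrightarrow\Lsp$ is compact, and so is $\Hsp\hookrightarrow[L^4(\calD,\C)]^p$ for $d\in\{2,3\}$. Hence along the extracted subsequence $\varphibf^{(k)}\to\varphibf_\star$ strongly in $\Lsp$ and in $[L^4(\calD,\C)]^p$. Strong $\Lsp$-convergence yields $\out{\varphibf_\star}{\varphibf_\star}=\lim_k\out{\varphibf^{(k)}}{\varphibf^{(k)}}=N$, so $\varphibf_\star\in\OB$. The quadratic form $\tfrac{1}{2}\|\cdot\|_{\sR}^2$ is convex and continuous on $\Hsp$, hence weakly lower semicontinuous, while the quartic interaction term in \eqref{eq:energy2} is continuous on $[L^4(\calD,\C)]^p$ by H\"older's inequality and therefore even converges along the subsequence. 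Combining these two facts gives $\calE(\varphibf_\star)\leq\liminf_k\calE(\varphibf^{(k)})=\calE_\star$, and the reverse inequality holds because $\varphibf_\star\in\OB$. Thus $\varphibf_\star$ is a global minimizer.

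The main delicacy is the rotation contribution $-\tfrac{1}{2}\Omega_j\,\overline{\varphi}_j\spot\varphi_j$, which is sign-indefinite and would, if handled naively, obstruct both the lower bound on $\calE$ and the coercivity estimate needed to extract a weakly convergent subsequence. This is exactly why one recasts the sum of the kinetic, rotation, and trapping contributions as $\tfrac{1}{2}\|\varphibf\|_{\sR}^2$ with the covariant gradient $\nablaR$ and the shifted potential $\VR$: the indefinite cross term is absorbed into a coercive, non-negative quadratic form, Assumption~\textbf{A2} supplies the $\varepsilon_j$-margin that keeps $\|\cdot\|_{\sR}$ equivalent to $\|\cdot\|_{\Hsp}$, and weak lower semicontinuity then comes for free from convexity. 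This is the decisive adaptation over the non-rotating setting treated in~\cite[Th.~2.4]{AHPS25}; the remaining steps of the proof are structurally identical.
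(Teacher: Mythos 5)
Your proposal is correct and follows essentially the same route as the paper: the paper does not spell out the argument but establishes non-negativity of $\calE$ via the reformulation \eqref{eq:energy2} and then invokes the direct method from \cite[Th.~2.4]{AHPS25}, ``adapting its proof to the inner product \eqref{R-inner-product}'' --- which is precisely the minimizing-sequence/compactness/weak-lower-semicontinuity argument you carry out, including the key observation that Assumptions~\textbf{A1}--\textbf{A2} absorb the indefinite rotation term into the coercive norm $\|\cdot\|_{\sR}$.
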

%

\section{Properties of the derivatives of the energy functional and nonlinear eigenvector problem}
\label{sec:derivatives}
A detailed understanding of the derivatives of the energy functional $\,\calE$ is essential for both theoretical analysis and algorithmic development. In this section, we examine the structure and properties of the first- and second-order derivatives of $\,\calE$ and characterize its constrained critical points as solutions to a~nonlinear eigenvector problem, known as the coupled Gross-Pitaevskii eigenvalue problem. 
%
%
\subsection{The first-order derivative}

To proceed, we first observe that the energy functional $\,\calE$ in~\eqref{eq:energy} is twice $\mathbb{R}$-Fr\'echet differentiable on~$\Hsp$. Using the symmetry of the interaction matrix~$K$, the first-order directional derivative of~$\,\calE$ at~$\varphibf \in \Hsp$ along $\wbf \in \Hsp$ takes the form 
\[
\Drm \calE(\varphibf)\wbf = a_{\varphibf}(\varphibf,\wbf),
\]
where the bilinear form $a_{\varphibf}:H \times H \rightarrow \mathbb{R}$ is defined by 
\begin{align}
a_{\varphibf}(\vbf,\wbf)
    &= \sum_{j=1}^p \re \int_\mathcal{D} (\nabla v_j)^T \, \overline{\nabla w_j}  
        + \V_j(x)\, v_j \overline{w_j} 
        - \Omega_j \overline{w_j}\, \spot v_j 
        + \rho_j(\varphibf)\, v_j \overline{w_j} \, \dx \label{eq:aphi} \\
   & = (\vbf,\wbf)_{\sR}+  \re \int_{\calD}\big(\varphibf \circ \overline{\varphibf}\big)K\big(\vbf \circ \overline{\wbf}\big)^T \, \dx. \label{eq:aphi_short}
\end{align}
The next proposition presents the fundamental properties of this bilinear form and can be verified by direct calculation. 
\begin{proposition}\label{prop:aphi}
Let Assumptions~{\bf A1}--{\bf A3} be fulfilled. Then for any $\varphibf\in \Hsp$,  the bilinear form~$a_\varphibf$ defined in \eqref{eq:aphi} is symmetric, bounded, and coercive with the coercivity constant of~$\,1$ with respect to the norm $\|\cdot\|_{\sR}$.  
\end{proposition}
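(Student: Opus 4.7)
The plan is to work from the compact representation~\eqref{eq:aphi_short}, which writes $a_\varphibf(\vbf,\wbf)$ as the sum of $(\vbf,\wbf)_{\sR}$ and the interaction integral $\re\int_\calD (\varphibf\circ\overline{\varphibf})\,K\,(\vbf\circ\overline{\wbf})^T\dx$. This decomposition isolates the quadratic kinetic/trapping contribution from the nonlinear coupling term and reduces each of the three claims to a short verification. Working from~\eqref{eq:aphi} directly would require a separate treatment of the angular momentum term $-\Omega_j\overline{w_j}\spot v_j$, whereas in~\eqref{eq:aphi_short} this complication is already absorbed into $(\cdot,\cdot)_{\sR}$ by the definition~\eqref{R-inner-product}.

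For symmetry, the form $(\cdot,\cdot)_{\sR}$ is symmetric by construction. For the interaction term, exchanging $\vbf$ and $\wbf$ replaces each factor $v_j\overline{w_j}$ by its complex conjugate $w_j\overline{v_j}$, so the integrand is conjugated and its real part is invariant; the symmetry of $K$ from Assumption~\textbf{A3} is what guarantees that $(\varphibf\circ\overline{\varphibf})K(\vbf\circ\overline{\wbf})^T$ is genuinely symmetric in $\vbf,\wbf$ after bilinear expansion.

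For boundedness, the leading term is controlled via the norm equivalence $c_1\|\cdot\|_\Hsp\leq \|\cdot\|_{\sR}\leq c_2\|\cdot\|_\Hsp$ quoted above the proposition, which gives $|(\vbf,\wbf)_{\sR}|\leq c_2^2\|\vbf\|_\Hsp\|\wbf\|_\Hsp$. For the interaction term I would apply Hölder's inequality componentwise to obtain
\[
\Big|\int_\calD |\varphi_i|^2\, v_j\overline{w_j}\dx\Big|\leq \|\varphi_i\|_{L^4}^2\,\|v_j\|_{L^4}\,\|w_j\|_{L^4},
\]
and then invoke the Sobolev embedding $H_0^1(\calD,\C)\hookrightarrow L^4(\calD,\C)$, valid for $d\in\{2,3\}$ on a bounded Lipschitz domain, to pass from $L^4$- to $H_0^1$-norms. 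Summing against the entries of $K$ yields $|a_\varphibf(\vbf,\wbf)|\leq C(\varphibf)\|\vbf\|_\Hsp\|\wbf\|_\Hsp$ with $C(\varphibf)$ depending polynomially on $\|\varphibf\|_\Hsp$.

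For coercivity, setting $\wbf=\vbf$ makes each component of $\vbf\circ\overline{\vbf}$ equal to $|v_j|^2\geq 0$, and analogously for $\varphibf\circ\overline{\varphibf}$. Since every $\kappa_{ij}\geq 0$ by Assumption~\textbf{A3}, the interaction term reduces to $\sum_{i,j=1}^p\kappa_{ij}\int_\calD |\varphi_i|^2|v_j|^2\dx\geq 0$, so that $a_\varphibf(\vbf,\vbf)\geq \|\vbf\|_{\sR}^2$, i.e.\ coercivity with constant exactly~$1$. No step here is serious; the only point worth flagging is that the unit coercivity constant is structural — it relies on the non-negativity of all couplings together with the fact that the reformulated energy~\eqref{eq:energy2} carries leading term $\tfrac12\|\varphibf\|_{\sR}^2$ with coefficient matched to the $\sR$-norm, so no absorption into the leading term is needed. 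Any attractive coupling $\kappa_{ij}<0$ would force such absorption, and the constant would degrade.
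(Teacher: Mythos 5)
Your proof is correct and is essentially the paper's own argument: the paper offers no details, stating only that the proposition ``can be verified by direct calculation,'' and your verification via the decomposition~\eqref{eq:aphi_short} (Cauchy--Schwarz and norm equivalence for the $\sR$-part, H\"older plus the embedding $H_0^1(\calD,\C)\hookrightarrow L^4(\calD,\C)$ for the interaction part, and $\kappa_{ij}\geq 0$ for coercivity with constant~$1$) is exactly that calculation. One small correction: symmetry of the interaction term follows from your conjugation argument alone, which needs only that the entries of $K$ and the densities $|\varphi_i|^2$ are \emph{real}; the symmetry of $K$ from Assumption~\textbf{A3} is not what drives it (it is instead used elsewhere, e.g.\ to identify $\Drm\calE(\varphibf)\wbf$ with $a_\varphibf(\varphibf,\wbf)$).
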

For any fixed $\varphibf \in \Hsp$, the coercive bilinear form $a_\varphibf$ defines an~alternative inner product on~$\Hsp$ and induces the norm $\|\vbf\|_{a_\varphibf} = \sqrt{a_\varphibf(\vbf, \vbf)}$, which is equivalent to $\|\cdot\|_{\sR}$. 
Moreover, the bilinear form $a_\varphibf$ defines the \emph{Gross-Pitaevskii Hamiltonian} $\calA_\varphibf\colon \Hsp\to \Hsp^\star$ given by
\[
\langle \calA_\varphibf \vbf,\wbf\rangle = a_\varphibf(\vbf,\wbf)  
\qquad \text{for all }\vbf,\wbf\in \Hsp.
\]
Due to the additive structure of $a_\varphibf$ in \eqref{eq:aphi}, the operator $\calA_\varphibf$ can be represented as 
\begin{equation}\label{eq:calA}
\langle \calA_\varphibf \vbf,\wbf\rangle 
= \sum_{j=1}^p \langle \calA_{\varphibf,j} v_j,w_j\rangle
\qquad \text{for all }\vbf,\wbf\in \Hsp,
\end{equation}
where the component operators $\calA_{\varphibf,j}\colon H_0^1(\DC)\to H^{-1}(\DC)$ are given by
\begin{equation}\label{eq:calAj}
    \langle\calA_{\varphibf,j}v_j,w_j\rangle 
    = \re \int_\mathcal{D} (\nabla v_j)^T \, \overline{\nabla w_j}  
	+ \V_j(x)\, v_j \, \overline{w_j} - \Omega_j \overline{w_j} \spot v_j 
    + \rho_j(\varphibf)\, v_j \, \overline{w_j}\, \dx, \quad j=1,\ldots,p.
\end{equation}
It follows from \eqref{eq:calA} that the operator $\calA_\varphibf$ acts component-wise on a $p$-frame, meaning that 
\[
\calA_\varphibf\,\vbf = (\calA_{\varphibf,1}v_1, \ldots,\calA_{\varphibf,p}v_p).
\]
This property immediately implies that $\calA_\varphibf$ is right-equivariant on $\DpS$ in the sense that for all $\vbf \in \Hsp$ and $\Theta \in \DpS$, we have $\calA_\varphibf(\vbf\Theta) = (\calA_\varphibf\vbf)\Theta$. Furthermore, for all $\Theta\in\DpS$, it holds that $\calA_{\varphibf\Theta}=\calA_{\varphibf}$, which means that $\calA_{\varphibf}$ is phase invariant.

\begin{proposition}
The mapping $\varphibf\mapsto a_\varphibf$ with the bilinear form $a_\varphibf$ defined in \eqref{eq:aphi_short} is conti\-nuous as a~function from $[L^4(\DC)]^p$ to the space of bilinear forms on $\Hsp$ in the sense that for all $\varphibf, \psibf \in [L^4(\DC)]^p$ and $\vbf$, $\wbf \in \Hsp$, there exists a constant $C_{\varphibf,\psibf} > 0$ such that
\begin{equation}\label{eq:boundedness}
|a_\varphibf(\vbf,\wbf) - a_{\psibf}(\vbf,\wbf)|\leq C_{\varphibf,\psibf}\|\vbf\|_H\|\wbf\|_H\|\varphibf-\psibf\|_{[L^4(\calD,\C)]^p}.
\end{equation}
\end{proposition}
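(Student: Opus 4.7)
The plan is to exploit the fact that the $\varphibf$-dependence in \eqref{eq:aphi_short} resides entirely in the quartic interaction term, since $(\vbf,\wbf)_{\sR}$ is independent of $\varphibf$. Thus
\[
a_\varphibf(\vbf,\wbf) - a_\psibf(\vbf,\wbf)
= \re \int_\calD \bigl[(\varphibf\circ\overline{\varphibf}) - (\psibf\circ\overline{\psibf})\bigr]\,K\,(\vbf\circ\overline{\wbf})^T\,\dx
= \sum_{i,j=1}^p \kappa_{ij} \re \!\int_\calD \!\bigl(|\varphi_i|^2 - |\psi_i|^2\bigr) v_j\overline{w_j}\,\dx.
\]
This reduction is the first step; the rest is a controlled estimate on each summand.

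Next, I would factorize the density difference through the elementary identity $|\varphi_i|^2 - |\psi_i|^2 = (|\varphi_i| - |\psi_i|)(|\varphi_i| + |\psi_i|)$ and the reverse triangle inequality $\bigl||\varphi_i| - |\psi_i|\bigr| \leq |\varphi_i - \psi_i|$. This yields pointwise
\[
\bigl|\bigl(|\varphi_i|^2 - |\psi_i|^2\bigr) v_j \overline{w_j}\bigr| \leq |\varphi_i - \psi_i|\,\bigl(|\varphi_i| + |\psi_i|\bigr)\,|v_j|\,|w_j|.
\]

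I would then invoke the generalized Hölder inequality with four factors in $L^4(\calD,\C)$, each of exponent $4$, to bound the integral by
\[
\|\varphi_i - \psi_i\|_{L^4}\,\bigl(\|\varphi_i\|_{L^4} + \|\psi_i\|_{L^4}\bigr)\,\|v_j\|_{L^4}\,\|w_j\|_{L^4}.
\]
Since $d\in\{2,3\}$ and $\calD$ is a bounded Lipschitz domain, the Sobolev embedding $H_0^1(\calD,\C) \hookrightarrow L^4(\calD,\C)$ holds with an embedding constant $C_S$, giving $\|v_j\|_{L^4} \leq C_S \|v_j\|_{H_0^1} \leq C_S \|\vbf\|_H$ and analogously for $w_j$.

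Summing over $i,j$ and collecting constants, we obtain \eqref{eq:boundedness} with
\[
C_{\varphibf,\psibf} = C_S^{\,2} \sum_{i,j=1}^p |\kappa_{ij}|\,\bigl(\|\varphi_i\|_{L^4} + \|\psi_i\|_{L^4}\bigr),
\]
which is finite for $\varphibf,\psibf \in [L^4(\calD,\C)]^p$ and depends continuously on these norms. There is no real obstacle here beyond a clean application of Hölder; the only item worth double-checking is the dimension restriction which ensures the required $H^1 \hookrightarrow L^4$ embedding and thus the well-posedness of the quartic term itself.
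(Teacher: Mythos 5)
Your proof is correct and takes essentially the same route as the paper's: both isolate the interaction term (the $(\cdot,\cdot)_{\sR}$ part cancels), bound the density difference via $\bigl||\varphi_i|^2-|\psi_i|^2\bigr|\leq(|\varphi_i|+|\psi_i|)\,|\varphi_i-\psi_i|$, apply the four-factor H\"older inequality in $L^4$, and invoke the Sobolev embedding $H_0^1(\calD,\C)\hookrightarrow L^4(\calD,\C)$ valid for $d=2,3$. The only difference is cosmetic: you track the constant componentwise through $\sum_{i,j}|\kappa_{ij}|$, whereas the paper keeps the vectorized form and uses the spectral norm $\|K\|_2$.
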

\begin{proof}
For all  $\varphibf, \psibf \in [L^4(\DC)]^p$ and $\vbf,\wbf\in\Hsp$, we obtain from \eqref{eq:aphi_short} that
\begin{align*}
|a_\varphibf(\vbf,\wbf) - a_{\psibf}(\vbf,\wbf)| 
& = \bigg|\int_\calD(\varphibf \circ \overline{\varphibf} - \psibf \circ \overline{\psibf})K \big(\re(\vbf \circ \overline{\wbf}) \big)^T \dx\bigg| \\
 & \leq C_4^2\|K\|_2 \big\||\varphibf| + |\psibf|\big\|_{[L^4(\DC)]^p}\big\|\varphibf - \psibf\big\|_{[L^4(\DC)]^p}\big\|\vbf\big\|_\Hsp\big\|\wbf\big\|_\Hsp,
\end{align*}
where the constant $C_4$ arises from the Sobolev embedding $\Hsp\hookrightarrow [L^4(\DC)]^p$.
This implies \eqref{eq:boundedness} with $C_{\varphibf,\psibf}=C_4^2\|K\|_2 \big\||\varphibf| + |\psibf|\big\|_{[L^4(\DC)]^p}$.
\end{proof}

Note that the coercivity of the bilinear form $a_{\varphibf}$ ensures the existence of the inverse operator $\calA_\varphibf^{-1}\colon \Hsp^\star\to \Hsp$ defined as $a_\varphibf(\calA_\varphibf^{-1}\vbf^{\star},\wbf)=\langle\vbf^{\star},\wbf\rangle$ for all $\vbf^{\star}\in\Hsp^\star$ and $\wbf\in\Hsp$. Due to the canonical inclusion~$\Hsp \subset \Hsp^\star$, the application $\calA_\varphibf^{-1}\vbf$ to functions $\vbf \in \Hsp$ is well-defined. Obviously,~$\calA_\varphibf^{-1}$ acts component-wise. The following proposition shows that it also inherits further properties of~$\calA_\varphibf$ such as phase invariance and right-equivariance on $\DpS$. 
\begin{proposition}\label{prop:inv_A_property}
    For all $\varphibf,\vbf \in \Hsp$ and $\Theta \in \D(p,\SSS)$, we have 
\begin{equation}\label{eq:invAprop}
    \calA^{-1}_{\varphibf \Theta} \vbf = \calA^{-1}_{\varphibf} \vbf, \qquad 
    \calA^{-1}_{\varphibf} (\vbf \Theta) = (\calA^{-1}_{\varphibf} \vbf) \Theta.
\end{equation}
\end{proposition}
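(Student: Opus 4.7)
My plan is to deduce both identities in \eqref{eq:invAprop} directly from the two properties of $\calA_\varphibf$ already established just above the proposition: phase invariance ($\calA_{\varphibf\Theta} = \calA_\varphibf$ for all $\Theta\in\DpS$) and right-equivariance ($\calA_\varphibf(\ubf\Theta) = (\calA_\varphibf\ubf)\Theta$), combined with the fact that coercivity of $a_\varphibf$ (Proposition~\ref{prop:aphi}) makes $\calA_\varphibf\colon \Hsp\to\Hsp^\star$ a bijection via Lax–Milgram, so that $\calA^{-1}_\varphibf$ is uniquely defined by the variational identity in the excerpt.

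For the first identity, the key observation is that phase invariance of $\calA_\varphibf$ means the bilinear forms $a_{\varphibf\Theta}$ and $a_\varphibf$ coincide, since both represent the same operator $\Hsp\to\Hsp^\star$. Therefore the defining relation $a_{\varphibf\Theta}(\calA^{-1}_{\varphibf\Theta}\vbf,\wbf) = \langle\vbf,\wbf\rangle$ coincides with $a_\varphibf(\calA^{-1}_\varphibf\vbf,\wbf) = \langle\vbf,\wbf\rangle$ for all $\wbf\in\Hsp$. By uniqueness of the Lax–Milgram solution, I conclude $\calA^{-1}_{\varphibf\Theta}\vbf = \calA^{-1}_\varphibf\vbf$.

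For the second identity, I set $\ubf := \calA^{-1}_\varphibf\vbf \in \Hsp$, so that $\calA_\varphibf\ubf = \vbf$ in $\Hsp^\star$ (using $\Hsp\hookrightarrow\Hsp^\star$). Since $\Hsp$ is stable under right multiplication by $\Theta\in\DpS$, the element $\ubf\Theta$ belongs to $\Hsp$ and I can apply the right-equivariance of $\calA_\varphibf$ to obtain $\calA_\varphibf(\ubf\Theta) = (\calA_\varphibf\ubf)\Theta = \vbf\Theta$. Applying $\calA^{-1}_\varphibf$ on both sides (which is justified by the bijectivity of $\calA_\varphibf$) yields $\calA^{-1}_\varphibf(\vbf\Theta) = \ubf\Theta = (\calA^{-1}_\varphibf\vbf)\Theta$, as claimed.

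I do not expect a real obstacle here; the proof is a two-line consequence of properties already recorded in the excerpt. The only point requiring a brief justification is that the operator $\calA_\varphibf\colon\Hsp\to\Hsp^\star$ has a genuine two-sided inverse on $\Hsp$, which follows from Lax–Milgram (coercivity of $a_\varphibf$ with respect to the equivalent norm $\|\cdot\|_{\sR}$) and was implicitly used when $\calA^{-1}_\varphibf$ was defined. One could also give a purely variational proof of \eqref{eq:invAprop}$_2$ by testing both sides against arbitrary $\wbf\in\Hsp$ and using the component-wise action of $a_\varphibf$ together with the identity $(\vbf\Theta,\wbf)_{\Lsp} = (\vbf,\wbf\bar\Theta)_{\Lsp}$, but the operator-level argument above is shorter.
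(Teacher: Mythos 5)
Your proof is correct. For the first identity you argue exactly as the paper does: phase invariance gives $a_{\varphibf\Theta}=a_{\varphibf}$, so the variational problems defining $\calA^{-1}_{\varphibf\Theta}\vbf$ and $\calA^{-1}_{\varphibf}\vbf$ coincide, and uniqueness of the Lax--Milgram solution (coercivity from Proposition~\ref{prop:aphi}) forces the two solutions to agree.

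For the second identity your route differs from the paper's, though the two are closely related. You work at the operator level: set $\ubf=\calA^{-1}_{\varphibf}\vbf$, invoke the right-equivariance $\calA_{\varphibf}(\ubf\Theta)=(\calA_{\varphibf}\ubf)\Theta$ recorded just before the proposition, and invert. The paper instead stays entirely at the level of bilinear forms: it tests the defining relation against $\wbf\Theta$, uses the unitary invariance $(\vbf\Theta,\wbf\Theta)_{\Lsp}=(\vbf,\wbf)_{\Lsp}$ of the $\Lsp$-inner product, and moves $\Theta$ across $a_{\varphibf}$ via $a_{\varphibf}(\zbf,\wbf\Theta)=a_{\varphibf}(\zbf\overline{\Theta},\wbf)$, arriving at $a_{\varphibf}(\calA^{-1}_{\varphibf}\vbf,\wbf)=a_{\varphibf}\big(\calA^{-1}_{\varphibf}(\vbf\Theta)\overline{\Theta},\wbf\big)$ for all $\wbf$, whence the claim. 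The arguments are essentially equivalent, since right-equivariance of $\calA_{\varphibf}$ is precisely the operator formulation of that bilinear-form identity; what the paper's version buys is that it never multiplies an element of $\Hsp^\star$ by $\Theta$. Your argument uses the step $(\calA_{\varphibf}\ubf)\Theta=\vbf\Theta$ followed by application of $\calA^{-1}_{\varphibf}$, which implicitly requires that right multiplication by $\Theta$ on $\Hsp^\star$ (defined by duality) is compatible with the Gelfand embedding $\Hsp\subset\Hsp^\star$, i.e., that the functional $(\calA_{\varphibf}\ubf)\Theta$ is represented by the function $\vbf\Theta$. This is immediate from $(\vbf\Theta,\wbf)_{\Lsp}=(\vbf,\wbf\overline{\Theta})_{\Lsp}$, but it deserves a word; your own closing remark sketching the purely variational alternative is, in fact, the paper's proof.
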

\begin{proof}
    For all $\varphibf,\vbf,\wbf \in \Hsp$ and $\Theta \in \D(p,\SSS)$, we obtain that 
    \begin{align*}
    a_{\varphibf}\big(\calA_{\varphibf}^{-1} \vbf, \wbf\big)
        & = (\vbf, \wbf)_{\Lsp} 
          = a_{\varphibf \Theta}\big(\calA^{-1}_{\varphibf \Theta} \vbf,\wbf  \big) 
          = a_{\varphibf }\big(\calA^{-1}_{\varphibf \Theta} \vbf,\wbf\big), \\
    a_{\varphibf}\big(\calA_{\varphibf}^{-1} \vbf, \wbf\big)
        & = (\vbf \Theta , \wbf \Theta)_{\Lsp} 
        = a_{\varphibf}\big(\calA^{-1}_{\varphibf} (\vbf \Theta),\wbf \Theta \big) 
        = a_{\varphibf }\big(\calA^{-1}_{\varphibf} (\vbf \Theta)\overline{\Theta},\wbf \big).
    \end{align*}
    These relations immediately imply \eqref{eq:invAprop}.
\end{proof}

%
%
\subsection{The second-order derivative}
The second-order directional derivative of $\,\calE$ at $\varphibf\in\Hsp$ in the direction of $\vbf, \wbf \in \Hsp$ is computed~as 
\begin{align}\label{eq:HessE}
	\langle \Drm^2 \calE(\varphibf)\vbf,\wbf\rangle
	& = \lim_{t\to 0} \tfrac{1}{t}\, \big\langle \calA_{\varphibf+t\vbf}(\varphibf+t\vbf)-\calA_\varphibf\, \varphibf ,\wbf  \big\rangle 
 = \big\langle \calA_\varphibf \,\vbf +\calB_{\varphibf}(\vbf,\varphibf), \wbf\big\rangle,
\end{align}
where the operator $\calB_\varphibf\colon \Hsp\times \Hsp \to \Hsp^\star$ is given by 
\[
\langle\calB_{\varphibf}(\vbf,\ubf), \wbf\rangle 
    = 2 \int_\mathcal{D} \big(\re(\varphibf \circ \overline{\vbf})\big) K \big(\re (\ubf \circ \overline{\wbf})\big)^T \, \mathrm{d}x, \qquad \ubf, \vbf, \wbf \in \Hsp. 
\]
This operator can also be written in additive form
\[
\langle\calB_{\varphibf}(\vbf,\ubf), \wbf\rangle 
 = \sum_{i,j=1}^p \langle\calB_{\varphibf,ij}(v_j,u_i), w_i\rangle
 \]
 with the operators 
 \begin{equation}\label{eq:Bij}
 \langle\calB_{\varphibf,ij}(v_j,u_i), w_i\rangle
 = 2 \,\kappa_{ij}\int_\mathcal{D}  \re(\varphi_j \overline{v_j}) \, \re (u_i \overline{w_i}) \dx, \qquad i,j=1,\ldots, p. 
\end{equation}
Key properties of the $\R$-Fr\'echet derivative $\Drm^2\calE(\varphibf)$ are collected in the following proposition.

\begin{proposition}\label{prop:D2Eproperties}
Let Assumptions~{\bf A1}--{\bf A3} be fulfilled. Then for any $\varphibf\in \Hsp$, the second-order derivative~$\Drm^2\calE(\varphibf)$  defined in \eqref{eq:HessE} is symmetric and bounded. If, in addition, $K$ is positive definite, then $\Drm^2\calE(\varphibf)$ is coercive on $\Hsp$.
\end{proposition}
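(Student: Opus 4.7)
The plan is to decompose $\langle \mathrm{D}^2\calE(\varphibf)\vbf,\wbf\rangle = \langle \calA_\varphibf \vbf,\wbf\rangle + \langle \calB_\varphibf(\vbf,\varphibf),\wbf\rangle$ and handle each piece separately, using Proposition~\ref{prop:aphi} for the linear operator $\calA_\varphibf$ and the explicit integral representation \eqref{eq:Bij}, summed to global form, for the nonlinear operator~$\calB_\varphibf$.

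For symmetry, the first summand is symmetric in $(\vbf,\wbf)$ because $a_\varphibf$ is symmetric by Proposition~\ref{prop:aphi}. For the second summand, substituting $\ubf=\varphibf$ into the definition of $\calB_\varphibf$ yields
\[
\langle \calB_\varphibf(\vbf,\varphibf),\wbf\rangle = 2\int_\calD \re(\varphibf\circ\overline{\vbf})\, K\, \re(\varphibf\circ\overline{\wbf})^T \dx,
\]
and since this is a scalar and $K^T=K$ by Assumption~\textbf{A3}, transposing the integrand exchanges the roles of $\vbf$ and $\wbf$, giving symmetry.

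For boundedness, $\calA_\varphibf$ is bounded by Proposition~\ref{prop:aphi}. For the nonlinear piece, I would apply the pointwise estimate $|yKz^T|\leq \|K\|_2\,\|y\|\,\|z\|$ with $y=\re(\varphibf\circ\overline{\vbf})$ and $z=\re(\varphibf\circ\overline{\wbf})$, noting $\|y(x)\|\leq \bigl(\sum_j |\varphi_j(x)|^2|v_j(x)|^2\bigr)^{1/2}$, then apply the Cauchy-Schwarz inequality in the space integral, and finally invoke the Sobolev embedding $\Hsp \hookrightarrow [L^4(\calD,\C)]^p$ already used above to obtain a bound of the form $|\langle \calB_\varphibf(\vbf,\varphibf),\wbf\rangle|\leq C\|K\|_2\|\varphibf\|_\Hsp^2\|\vbf\|_\Hsp\|\wbf\|_\Hsp$.

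For coercivity under the additional hypothesis that $K$ is positive definite, Proposition~\ref{prop:aphi} delivers $\langle \calA_\varphibf\vbf,\vbf\rangle = \|\vbf\|_{\sR}^2 \geq c_1^2 \|\vbf\|_\Hsp^2$ via the norm equivalence. For the nonlinear contribution, setting $\wbf=\vbf$ and writing $y(x):=\re(\varphibf(x)\circ\overline{\vbf(x)})\in\R^p$ gives
\[
\langle \calB_\varphibf(\vbf,\varphibf),\vbf\rangle = 2\int_\calD y(x)\, K\, y(x)^T \dx \geq 0,
\]
since positive definiteness (in fact, semi-definiteness) of $K$ makes the integrand pointwise non-negative. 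Adding the two lower bounds yields $\Hsp$-coercivity with constant $c_1^2$.

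I do not anticipate any real obstacle here: the argument is a direct combination of Proposition~\ref{prop:aphi} with the structural form of $\calB_\varphibf$, the symmetry of $K$ driving the symmetry of $\mathrm{D}^2\calE(\varphibf)$ and its sign supplying coercivity. The only place that needs a bit of care is recognizing that $\calB_\varphibf(\vbf,\varphibf)$, when tested against $\vbf$, reduces to a pointwise quadratic form in the auxiliary $\R^p$-valued field $y(x)$, which makes the sign transparent and decouples the bound from any oscillatory behavior of $\varphibf$ or $\vbf$ individually.
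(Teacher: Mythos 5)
Your proof is correct and follows essentially the same route as the paper's: decompose $\Drm^2\calE(\varphibf)$ into the $\calA_\varphibf$ part (handled by Proposition~\ref{prop:aphi}) plus $\calB_\varphibf(\,\cdot\,,\varphibf)$, whose symmetry comes from $K=K^T$, whose boundedness comes from the Sobolev embedding $\Hsp\hookrightarrow[L^4(\calD,\C)]^p$, and whose pointwise nonnegativity under positive (semi-)definite $K$ yields coercivity; the paper states exactly these facts, only without writing out the computations. One cosmetic slip: Proposition~\ref{prop:aphi} gives $\langle\calA_\varphibf\vbf,\vbf\rangle\geq\|\vbf\|_{\sR}^2$ (coercivity constant $1$), not equality, since $a_\varphibf(\vbf,\vbf)$ also contains the nonnegative interaction term; your subsequent chain of inequalities is unaffected.
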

\begin{proof} 
The first result follows from Proposition~\ref{prop:aphi} and the symmetry and boundedness of the linear operator $\calB_\varphibf(\,\cdot\,,\varphibf)$. If $K$ is positive definite, then $\langle\calB_\varphibf(\vbf,\varphibf),\vbf\rangle\geq 0$ for all $\vbf\in H$. Hence, using Proposition~\ref{prop:aphi} once more, we find that $\Drm^2\calE(\varphibf)$ is coercive on $\Hsp$.
\end{proof}

%
%
\subsection{Nonlinear eigenvector problem}
\label{ssec:first_order_optimality}
A~relation between the constrained energy minimization problem~\eqref{eq:min} and a~nonlinear eigenvector problem can be established by introducing the Lagrangian 
\[
	\calL(\varphibf,\Lambda) 
	= \calE(\varphibf) - \frac{1}{2} \trace\Big(\Lambda \big(\out{\varphibf}{\varphibf}-N\big) \Big)
\]
with a~Lagrange multiplier $\Lambda=\diag(\lambda_1,\ldots,\lambda_p)\in\DpR$. The directional derivative of $\calL$ with respect to $\varphibf\in\Hsp$ along $\wbf\in\Hsp$ has the form  
\begin{equation}\label{eq:derL}
	\Drm_\varphibf \calL(\varphibf,\Lambda)\wbf
	 = \Drm\calE(\varphibf)\wbf-(\varphibf\,\Lambda,\wbf)_\Lsp = \langle\calA_\varphibf\,\varphibf,\wbf\rangle - (\varphibf\,\Lambda,\wbf)_\Lsp. 
\end{equation}
A $p$-frame $\varphibf \in \Hsp$ is called a \emph{constrained critical point} of the energy functional~$\,\calE$ if the first-order optimality conditions are satisfied, which means that $\varphibf \in \OB$, and there exists a Lagrange multiplier $\Lambda \in \DpR$ such that $\Drm_\varphibf \calL(\varphibf,\Lambda)\wbf = 0$ for all $\wbf \in \Hsp$. Due to \eqref{eq:derL}, the computation of the constrained critical points of $\,\calE$ is therefore linked to the following nonlinear eigenvector problem: find $\varphibf\in\OB$ and $\Lambda\in\DpR$ such that 
\begin{equation}\label{eq:nlevp}
 \langle\calA_\varphibf\,\varphibf, \wbf\rangle 
    = (\varphibf\,\Lambda,\wbf)_\Lsp \qquad \text{for all } \wbf \in \Hsp.
\end{equation}
Given a constrained critical point $\varphibf \in \OB$, the corresponding Lagrange multiplier~$\Lambda$ can then be determined as $\Lambda = \coout{\calA_\varphibf\,\varphibf}{\varphibf}N^{-1}$.

Using the additive representation \eqref{eq:calA}, we rewrite \eqref{eq:nlevp} component-wise as
\begin{equation}\label{eq:levp1}
    \langle \calA_{\varphibf,j} \varphi_j, w_j \rangle
    = \lambda_j (\varphi_j, w_j)_{L^2} \qquad \text{for all } w_j \in H_0^1(\DC).
\end{equation}
Then the expression for $\calA_{\varphibf,j}$ in \eqref{eq:calAj} yields the coupled Gross-Pitaevskii eigenvalue problem
\[
	-\Delta \varphi_j + \V_j \varphi_j - \Omega_j  \spot \varphi_j + \rho_j(\varphibf)\varphi_j 
    = \lambda_j\varphi_j, \qquad j=1,\ldots,p.
\]
Given a pair consisting of a $p$-frame $\varphibf$ and a~diagonal matrix $\Lambda = \diag(\lambda_1, \ldots, \lambda_p)$, that satisfies~\eqref{eq:nlevp} or \eqref{eq:levp1}, we call $\varphibf$ an~\emph{eigenvector} corresponding to the \emph{eigenvalues} $\lambda_1, \ldots, \lambda_p$.

Note that due to the first-order optimality condition, a~ground state \mbox{$\varphibf_*\in\OB$}, being the global minimizer of $\,\calE$, is a~constrained critical point. In the non-rotating case ($\Omega=0$), it has been shown in \cite[Prop.~4]{AHPS25} that for a~ground state $\varphibf_*$ and the corresponding Lagrange multiplier 
\mbox{$\Lambda_* = \diag(\lambda_{*,1}\ldots,\lambda_{*,p})$}, the smallest eigenvalue of the component operator $\calA_{\varphibf_*,j} $ coincides with~$\lambda_{*,j}$ for $j=1,\ldots,p$. However, for rotating multicomponent BECs, this component-wise minimality property no longer holds in general, which makes the convergence analysis of iterative methods for computing the ground state more challenging; see \cite{HenY25} for the related discussion in the single-component case. There, it has been proposed to work with the spectrum of the second-order $\R$-Fr\'echet derivative $\Drm^2\calE(\varphibf_*)$ instead. To extend these results to the multicomponent case and to address the phase-induced non-uniqueness of the ground state, we need to analyze the quotient nature of the mass constraints \eqref{eq:mass}.

%
\section{Quotient geometry}\label{sec:geometry}
As mentioned in Remark~\ref{rm:invariance}, the energy functional~$\,\calE$ in~\eqref{eq:energy} is invariant with respect to multiplications by an~arbitrary phase matrix $\Theta \in \DpS$. When encoding the mass constraints \eqref{eq:mass} geometrically, it is natural to identify states that differ only in phase. This leads to the concept of quotient manifolds. In the following, we introduce such a~manifold first for the case of a single component and then extend it to the multicomponent setting. 

In the single-component case ($p=1$), the generalized oblique manifold $\mathcal{O}\mathcal{B}_N^{\,\C}(1,\Hsp)$ reduces to the \emph{complex-valued sphere} 
\[
\Sphere_{N_1} = \{\varphi \in H^1_0(\mathcal{D},\C)\enskip:\enskip(\varphi,\varphi)_{L^2} = N_1\}.
\]
Its tangent space at $\varphi\in\Sphere_{N_1}$ is given by
\[
T_{\varphi}\,\Sphere_{N_1} = \{ z \in H^1_0(\mathcal{D},\C) \enskip : \enskip (z,\varphi)_{L^2} = 0 \}.
\]
Endowing this space with a~Riemannian metric $g_\varphi: T_\varphi\,\Sphere_{N_1} \times T_\varphi\,\Sphere_{N_1} \to \mathbb{R}$ turns $\Sphere_{N_1}$ into a~Riemannian Hilbert manifold. 

Using the equivalence relation on $\Sphere_{N_1}$ defined as $\varphi \sim \psi$  if and only if $\varphi = \psi\,\theta$ for some $\theta \in \SSS$, we can form the quotient space $\Sphere_{N_1}/\SSS$ as the set of all equivalence classes $[\varphi]=\{\psi\in\Sphere_{N_1}\;:\; \varphi\sim\psi\}$ for $\varphi\in\Sphere_{N_1}$. To make a link between the sphere $\Sphere_{N_1}$ and the quotient space $\Sphere_{N_1}/\SSS$, we define the canonical projection $\pi_1:\Sphere_{N_1}\to\Sphere_{N_1}/\SSS$ as $\pi_1(\varphi)=[\varphi]$. Observe that $\mathbb{S}$ is a~compact Lie group and the map $(e^{\ci \alpha},v) \mapsto v e^{\ci \alpha}$ is a smooth, proper and free action. Hence, $\pi_1$ is a~smooth submersion and  $\Sphere_{N_1}/\SSS$ admits a~quotient manifold structure \cite[Prop.~5.3.2]{AbrMR88}. In order to characterize the tangent space to $\Sphere_{N_1}/\SSS$, we decompose the tangent space to $\Sphere_{N_1}$ as 
$T_\varphi\,\Sphere_{N_1}=\calV_{\varphi}\oplus\calH_{\varphi}^g$, where 
$\calV_{\varphi} = \{ \ci\varphi\sigma \enskip:\enskip \sigma\in \R\}$ is the \textit{vertical space}, which does not depend on the metric, and 
\[
\Horg{\varphi}{g} = \{\xi \in T_\varphi\Sphere_{N_1} \enskip:\enskip g_\varphi(\nu,\xi) = 0 \text{ for all } \,\nu \in \mathcal{V}_\varphi\}
\]
is the $g_\varphi$-orthogonal \textit{horizontal space}.
The vertical space $\calV_\varphi$ captures the directions in which the equivalence class does not change when moving on a~curve in $\Sphere_{N_1}$. 
Since $\Drm\!\pi_1(\varphi)$ is bijective on the horizontal space $\calH_\varphi^g$, the tangent space $T_{[\varphi]}\,\Sphere_{N_1}/\SSS$ to $\Sphere_{N_1}/\SSS$ at $[\varphi] \in \Sphere_{N_1}/\SSS$ can be identified with~$\calH_{\varphi}^g$ in the sense that for all $\widetilde{z}\in T_{[\varphi]}\,\Sphere_{N_1}/\SSS$, there exists a~unique \textit{horizontal lift} $z_\varphi^{\hrm,g} \in \Horg{\varphi}{g}$ such that $\Drm\!\pi_1(\varphi)z^{\hrm,g}_\phi=\widetilde{z}$.
We refer to \cite{Boumal23} for further details for quotient manifolds in the matrix case and to \cite{AltPS24,BilM14} for related discussions for the Stiefel manifold and its corresponding quotient manifold, the Grassmann manifold, in the infinite-dimensional setting.

To apply these ideas to the multicomponent case, we notice that the generalized oblique manifold $\OB$ is a~product manifold of $p$ spheres 
\[
    \OB = \Sphere_{N_1} \times \ldots \times \Sphere_{N_p},
\]
and the tangent space to this manifold at $\varphibf = (\varphi_1, \ldots, \varphi_p) \in \OB$ is given by 
\[
T_\varphibf\, \OB = \big\{\zbf \in \Hsp \enskip : \enskip\out{\varphibf}{\zbf}  = 0_p\big\}
=T_{\varphi_1}\Sphere_{N_1} \times \ldots \times T_{\varphi_p}\Sphere_{N_p}.
\]
This naturally leads to the product manifold $\calM^\times=\Sphere_{N_1}/\SSS \times \ldots \times \Sphere_{N_p}/\SSS$ with the usual product smooth structure. 

Alternatively, we can define the quotient space $\Seg = \OB/\DpS$ as the set of all equivalence classes $[\varphibf]=\{\psibf\in\OB\; : \; \varphibf=\psibf\Theta, \; \Theta\in\DpS\}$. Since $\DpS$ is a~compact Lie group acting smoothly, freely, and properly on $\OB$ from the right, the quotient space~$\Seg$ is a quotient manifold of co-dimension~$p$, and the canonical projection $\pi: \OB \rightarrow \Seg$ defined as $\pi(\varphibf) = [\varphibf]$ is a smooth submersion \cite[Prop.~5.3.2]{AbrMR88}. Using \cite[Th.~4.31]{Lee13}, the quotient manifold~$\Seg$ can be identified with the product manifold $\calM^{\times}$ and, as a consequence, the above results can be extended to the multicomponent setting. 

Given a Riemannian metric $g_\varphibf : T_{\varphibf}\,\OB \times T_{\varphibf}\,\OB \to \R$, the tangent space $T_\varphibf\, \OB $ can be decomposed as $T_\varphibf\, \OB=\calV_\varphibf\oplus\calH_\varphibf^g$ into the vertical space 
\begin{equation}\label{eq:vert_space}
\calV_{\varphibf} = \big\{\ci\varphibf\,\Sigma \enskip : \enskip \Sigma \in \DpR \big\} 
=\calV_{\varphi_1}\times\ldots\times \calV_{\varphi_p}
\end{equation}
and its $g_\varphibf$-orthogonal complement, the horizontal space
\begin{align*}
    \Horg{\varphibf}{g} = \{\xibf \in T_{\varphibf}\,\OB \enskip : \enskip g_\varphibf(\xibf,\nubf) = 0 \text{ for all } \,\nubf \in \mathcal{V}_\varphibf\}.
\end{align*}
As above, the significance of $\Horg{\varphibf}{g}$ is that it provides a~representation for any tangent vector \mbox{$\widetilde\zbf \in T_{[\varphibf]}\Seg$} via a~unique horizontal lift $\widetilde\zbf_\varphibf^{\hrm,g} \in \Horg{\varphibf}{g}$ such that $\Drm\!\pi(\varphibf)\widetilde\zbf_\varphibf^{\hrm,g}=\widetilde\zbf$. Note that exploiting the product structure of $\OB$ and defining a~\emph{product metric} as a~sum of metrics on the spheres $\calS_{N_j}$, i.e.,
\begin{equation}\label{eq:prod_metric}
g_\varphibf(\ybf,\zbf)=g_{\varphi_1,1}(y_1,z_1)+\ldots+g_{\varphi_p,p}(y_p,z_p)
\qquad \text{for all }\ybf,\zbf\in T_{\varphibf}\,\OB,
\end{equation}
the horizontal space $\calH_{\varphibf}^g$ can be expressed as a product $\calH_{\varphibf}^g=\calH_{\varphi_1}^{g_1}\times\ldots\times\calH_{\varphi_p}^{g_p}$ of the horizontal spaces $\calH_{\varphi_j}^{g_j}$ to the spheres $\calS_{N_j}$ endowed with the metrics $g_{\varphi_j,j}$, respectively.
If, in addition, we assume that the product metric is \emph{phase invariant} in the sense that 
\begin{equation}\label{eq:rotinvar}
    g_{\varphibf\Theta}(\ybf\Theta,\zbf\Theta) = g_\varphibf(\ybf,\zbf)\qquad \text{for all }
    \ybf,\zbf\in T_\varphibf\,\OB, \;\Theta\in\DpS, 
\end{equation} 
then we can prove that the horizontal lifts of the tangent vectors at points from the same equivalence class are related by a~scaling matrix from $\DpS$.

\begin{proposition}\label{prop:phase_lift}
Let $\varphibf\in\OB$. Assume that $\OB$ is endowed with a~phase invariant product metric $g_\varphibf$. Then for all $\widetilde\zbf \in T_{[\varphibf]}\Seg$ and all $\Theta\in\DpS$, the horizontal lifts of $\widetilde\zbf$ at $\varphibf$ and $\varphibf\Theta$ are related by $\widetilde\zbf^{\hrm,g}_{\varphibf\Theta} = \widetilde\zbf^{\hrm,g}_{\varphibf}\Theta$.
\end{proposition}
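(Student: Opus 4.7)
The plan is to verify, for the candidate $\xibf:=\widetilde\zbf^{\hrm,g}_{\varphibf}\,\Theta$, the three defining properties of a horizontal lift at $\varphibf\Theta$ (tangency to $\OB$, horizontality with respect to $g_{\varphibf\Theta}$, and the projection identity $\Drm\!\pi(\varphibf\Theta)\xibf=\widetilde\zbf$), and then invoke the uniqueness of the horizontal lift. This reduces the statement to three short verifications in which the multiplicativity of $\Theta\in\DpS$ is compatible with each of the relevant structures.

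First I would check tangency. Since $\Theta$ is diagonal with $|\theta_j|=1$, a direct component-wise calculation gives $\out{\varphibf\Theta}{\widetilde\zbf^{\hrm,g}_{\varphibf}\Theta}_\C=\out{\varphibf}{\widetilde\zbf^{\hrm,g}_{\varphibf}}_\C=0_p$, so $\xibf\in T_{\varphibf\Theta}\,\OB$ because $\widetilde\zbf^{\hrm,g}_{\varphibf}\in T_{\varphibf}\,\OB$. Second, I would check horizontality. An arbitrary vertical vector at $\varphibf\Theta$ has the form $\nubf=\ci\,\varphibf\Theta\,\Sigma$ with $\Sigma\in\DpR$ by \eqref{eq:vert_space}. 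Because both $\Theta$ and $\Sigma$ are diagonal, they commute, so $\nubf=\ci\,\varphibf\Sigma\,\Theta$, where $\ci\,\varphibf\Sigma\in\calV_{\varphibf}$. Applying the phase invariance hypothesis \eqref{eq:rotinvar} gives
\[
g_{\varphibf\Theta}\bigl(\widetilde\zbf^{\hrm,g}_{\varphibf}\Theta,\;\ci\,\varphibf\Sigma\,\Theta\bigr)
=g_{\varphibf}\bigl(\widetilde\zbf^{\hrm,g}_{\varphibf},\;\ci\,\varphibf\Sigma\bigr)=0,
\]
where the last equality uses that $\widetilde\zbf^{\hrm,g}_{\varphibf}\in\Horg{\varphibf}{g}$ is $g_{\varphibf}$-orthogonal to every vector in $\calV_{\varphibf}$. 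Hence $\xibf\in\Horg{\varphibf\Theta}{g}$.

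Third, I would verify the projection identity. Let $\gamma\colon(-\epsilon,\epsilon)\to\OB$ be a smooth curve with $\gamma(0)=\varphibf$ and $\dot\gamma(0)=\widetilde\zbf^{\hrm,g}_{\varphibf}$. Since $\Theta$ is a constant matrix, the curve $t\mapsto\gamma(t)\,\Theta$ stays on $\OB$, starts at $\varphibf\Theta$, and has velocity $\widetilde\zbf^{\hrm,g}_{\varphibf}\,\Theta=\xibf$ at $t=0$. The two curves $\gamma(t)$ and $\gamma(t)\Theta$ lie in the same $\DpS$-orbit at every time, so $\pi(\gamma(t)\Theta)=[\gamma(t)\Theta]=[\gamma(t)]=\pi(\gamma(t))$. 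Differentiating at $t=0$ yields $\Drm\!\pi(\varphibf\Theta)\xibf=\Drm\!\pi(\varphibf)\widetilde\zbf^{\hrm,g}_{\varphibf}=\widetilde\zbf$.

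Combining the three properties, $\xibf$ is a horizontal lift of $\widetilde\zbf$ at $\varphibf\Theta$; since such a lift is unique, $\widetilde\zbf^{\hrm,g}_{\varphibf\Theta}=\widetilde\zbf^{\hrm,g}_{\varphibf}\,\Theta$. The only nonroutine step is the horizontality verification, and there the key is noticing that diagonality lets $\Theta$ commute past $\Sigma$, so that the vertical vector $\nubf$ at $\varphibf\Theta$ is exactly the image under right-multiplication by $\Theta$ of a vertical vector at $\varphibf$; the hypothesis \eqref{eq:rotinvar} is precisely calibrated to cancel this $\Theta$ factor in the metric pairing.
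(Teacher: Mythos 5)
Your proof is correct and follows essentially the same route as the paper's: the projection identity obtained by differentiating the curves $\gamma(t)$ and $\gamma(t)\Theta$, horizontality from the phase invariance of the metric, and the conclusion via uniqueness of horizontal lifts; you merely spell out the tangency and horizontality checks (including the commutation of the diagonal matrices $\Theta$ and $\Sigma$) that the paper compresses into a single sentence. One small notational overstatement: for a general $g$-horizontal lift the complex pairing $\out{\varphibf}{\widetilde\zbf^{\hrm,g}_{\varphibf}}_\C$ need not equal $0_p$ (tangency only forces $\out{\varphibf}{\widetilde\zbf^{\hrm,g}_{\varphibf}} = \re\,\out{\varphibf}{\widetilde\zbf^{\hrm,g}_{\varphibf}}_\C = 0_p$), but your invariance identity $\out{\varphibf\Theta}{\widetilde\zbf^{\hrm,g}_{\varphibf}\Theta}_\C = \out{\varphibf}{\widetilde\zbf^{\hrm,g}_{\varphibf}}_\C$ still gives equality of the real parts, so the tangency conclusion stands.
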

\begin{proof}
    The result can be shown by adapting the proof in \cite[Ex.~9.26]{Boumal23} to the generalized oblique manifold. Let $\varphibf\in\OB$ and let $\widetilde\zbf^{\hrm,g}_{\varphibf}$ be a horizontal lift of $\widetilde{\zbf}$ at $\varphibf$. For any smooth curve $\gamma:(-1,1)\to\OB$ with $\gamma(0)=\varphibf$ and $\gamma'(0)=\widetilde\zbf^{\hrm,g}_{\varphibf}$, we have $\pi(\gamma(0))=[\varphibf]$ and $\frac{\rm d}{{\rm d}t}\pi(\gamma(t))\big|_{t=0}=\Drm\!\pi(\gamma(0))\gamma'(0)=\widetilde{\zbf}$. For any $\Theta\in\DpS$,  we define another curve $\gamma_\Theta(t)=\gamma(t)\Theta$, $t\in(-1,1)$. Then $\gamma_\Theta(0)=\varphibf\Theta$, $\gamma'_\Theta(0)=\widetilde\zbf^{\hrm,g}_{\varphibf}\Theta$, and $\pi(\gamma_\Theta(t))=\pi(\gamma(t))$. Differentiating the latter, we get 
    \[
    \widetilde{\zbf}
    =\dfrac{\rm d}{{\rm d}t}\pi(\gamma(t))\Big|_{t=0}
    =\dfrac{\rm d}{{\rm d}t}\pi(\gamma_\Theta(t))\Big|_{t=0}
    =\Drm\!\pi(\gamma_\Theta(0))\gamma'_\Theta(0)
    =\Drm\!\pi(\varphibf\Theta)(\widetilde\zbf^{\hrm,g}_{\varphibf}\Theta).
    \]
    Due to the phase invariance of the metric, we have $\widetilde\zbf^{\hrm,g}_{\varphibf}\Theta \in\calH_{\varphibf\Theta}^g$. Then the uniqueness of horizontal lifts finally implies that $\widetilde\zbf^{\hrm,g}_{\varphibf}\Theta$ is the horizontal lift of $\widetilde{\zbf}$ at $\varphibf\Theta$, i.e., 
    $\widetilde\zbf^{\hrm,g}_{\varphibf\Theta}=\widetilde\zbf^{\hrm,g}_{\varphibf}\Theta$.
\end{proof}

For the $\Lsp$-metric defined as $g_{\Lsp}(\ybf,\zbf) = (\ybf,\zbf)_\Lsp$ for all $\ybf,\zbf\in T_\varphibf\OB$, the associated horizontal space $\Horg{\varphibf}{\Lsp}$ at $\varphibf \in \OB$ has a~particularly simple structure, which follows directly from its definition: 
\begin{equation}\label{eq:hor_tan}
    \Horg{\varphibf}{\Lsp} = \big\{\xibf \in \Hsp \enskip : \enskip  \out{\varphibf}{\xibf}_\C = 0_p\big\} 
    = T_{\varphibf}\OB \cap T_{\ci\varphibf}\OB.
\end{equation}
This connects our results below to those obtained in \cite{HenY25}, where the right-hand side of \eqref{eq:hor_tan} is used extensively. 
Although we will not directly work with the $\Lsp$-metric in the optimization methods, since the tangential space $T_{\varphibf}\OB$ is not complete with respect to this metric, we will see below that at a~ground state~$\varphibf_*$, relation \eqref{eq:hor_tan} also holds for the horizontal spaces with respect to other relevant metrics. Motivated by this observation, it is natural to introduce a notion of compatibility between the Riemannian metrics~$g_{\varphibf}$ and~$g_{\Lsp}$ at $\varphibf \in \OB$.

\begin{definition} \label{def:hor_compat}
A Riemannian metric $g_{\varphibf}$ is called \emph{horizontally compatible with the \mbox{$\Lsp$-metric} $g_{\Lsp}$} at $\varphibf \in \OB$, if the associated horizontal spaces coincide, i.e., \mbox{$\Horg{\varphibf}{g} = \Horg{\varphibf}{\Lsp}$}.
\end{definition}

If $g_\varphibf$ is defined on the whole space $\Hsp$, it induces a linear operator $\calG_\varphibf: \Hsp \to \Hsp^{\star}$ given by $g_\varphibf(\vbf,\wbf)\! = \!\langle\calG_\varphibf\,\vbf,\wbf\rangle$. For a~pro\-duct metric as in \eqref{eq:prod_metric}, this operator acts component-wise, i.e., $\calG_\varphibf\,\vbf = (\calG_{\varphibf,1}v_1, \dots, \calG_{\varphibf,p}v_p)$. The following lemma gives a characterization of horizontal compatibility in terms of $\calG_{\varphibf,j}$.

\begin{proposition}\label{prop:hor_compat_eigvalue}
Let $\varphibf \in \OB$ and $g_\varphibf$ be a product metric with the associated component operators $\calG_{\varphibf,j}: H_0^1(\calD,\C) \to H^{-1}(\calD,\C)$. Then $g_\varphibf$ is horizontally compatible with the $\Lsp$-metric $g_{\Lsp}$ at $\varphibf$ if and only if for every $j = 1, \dots, p$, $\ci\varphi_j$ is an eigenfunction of $\calG_{\varphibf,j}$ on $T_{\varphi_j}\Sphere_{N_j}$ to a~nonzero real eigenvalue.
\end{proposition}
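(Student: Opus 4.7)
My plan is to use the product structure of $g_\varphibf$ to reduce the statement to a single-component claim on each sphere $\Sphere_{N_j}$, and then to translate the identity $\Horg{\varphi_j}{g_j} = \Horg{\varphi_j}{\Lsp}$ into the coincidence of the kernels of two nonzero continuous linear functionals on $T_{\varphi_j}\Sphere_{N_j}$, which by a standard Hahn--Banach-type argument is equivalent to their proportionality.

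First, I would exploit the product decomposition~\eqref{eq:prod_metric} together with the structure of $\calV_{\varphibf}$ from~\eqref{eq:vert_space} to write $\Horg{\varphibf}{g} = \Horg{\varphi_1}{g_1}\times\cdots\times\Horg{\varphi_p}{g_p}$. An analogous product decomposition for the $\Lsp$-horizontal space is immediate from~\eqref{eq:hor_tan}. Consequently, horizontal compatibility at $\varphibf$ is equivalent to the component-wise identity $\Horg{\varphi_j}{g_j} = \Horg{\varphi_j}{\Lsp}$ for every $j=1,\ldots,p$, and the rest of the proof can be carried out on a single factor.

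Next, for each $j$ I would describe both horizontal spaces as kernels of continuous linear functionals on $T_{\varphi_j}\Sphere_{N_j}$. Since $\calV_{\varphi_j}$ is the $\R$-span of $\ci\varphi_j$, a~tangent vector $\xi_j$ lies in $\Horg{\varphi_j}{g_j}$ iff $\langle\calG_{\varphibf,j}(\ci\varphi_j),\xi_j\rangle = 0$, while it lies in $\Horg{\varphi_j}{\Lsp}$ iff $(\ci\varphi_j,\xi_j)_{L^2} = 0$. Neither functional is trivial, since evaluating them at $\ci\varphi_j\in T_{\varphi_j}\Sphere_{N_j}$ gives the strictly positive values $g_{\varphi_j,j}(\ci\varphi_j,\ci\varphi_j) > 0$ and $(\ci\varphi_j,\ci\varphi_j)_{L^2} = N_j > 0$. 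Using the classical fact that two nonzero continuous linear functionals on a Hilbert space share the same kernel iff they differ by a nonzero scalar multiple, the equality $\Horg{\varphi_j}{g_j} = \Horg{\varphi_j}{\Lsp}$ is equivalent to the existence of $\mu_j\in\R\setminus\{0\}$ with
\[
\langle\calG_{\varphibf,j}(\ci\varphi_j) - \mu_j\,\ci\varphi_j, \xi_j\rangle = 0 \qquad \text{for all } \xi_j \in T_{\varphi_j}\Sphere_{N_j},
\]
which is exactly the assertion that $\ci\varphi_j$ is an eigenfunction of $\calG_{\varphibf,j}$ on $T_{\varphi_j}\Sphere_{N_j}$ with nonzero real eigenvalue $\mu_j$. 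Substituting $\xi_j = \ci\varphi_j$ additionally yields the explicit value $\mu_j = g_{\varphi_j,j}(\ci\varphi_j,\ci\varphi_j)/N_j > 0$, so the eigenvalue is automatically positive.

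The main delicate point is to justify that both functionals are indeed continuous on the closed subspace $T_{\varphi_j}\Sphere_{N_j}$ of $H_0^1(\calD,\C)$, as this is what legitimises the Hahn--Banach proportionality step in an~infinite-dimensional setting. For $(\ci\varphi_j,\cdot)_{L^2}$ this is immediate from Cauchy--Schwarz and the embedding $H_0^1(\calD,\C)\hookrightarrow L^2(\calD,\C)$, and for $\langle\calG_{\varphibf,j}(\ci\varphi_j),\cdot\rangle$ it follows from the standing assumption that $g_\varphibf$ is a~Riemannian metric on $\OB$, which guarantees boundedness of $\calG_{\varphibf,j}\colon H_0^1(\calD,\C)\to H^{-1}(\calD,\C)$. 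With these continuity observations in place, the proportionality-of-kernels argument applies verbatim and the proposition follows.
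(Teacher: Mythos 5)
Your proof is correct and follows essentially the same route as the paper: both arguments identify the two horizontal spaces as the kernels of the functionals $\langle\calG_{\varphibf,j}(\ci\varphi_j),\cdot\rangle$ and $(\ci\varphi_j,\cdot)_{L^2}$ on $T_{\varphi_j}\Sphere_{N_j}$ and conclude by proportionality of nonzero functionals with a common kernel --- the paper merely carries out that proportionality step by hand, decomposing $z_j$ into its $L^2$-component along $\ci\varphi_j$ plus a horizontal remainder, which produces the same eigenvalue $\varsigma_j = N_j^{-1}\langle\calG_{\varphibf,j}(\ci\varphi_j),\ci\varphi_j\rangle$ that you obtain. One cosmetic remark: the equal-kernels-implies-proportional step is purely algebraic (factor through the one-dimensional quotient), so your continuity/Hahn--Banach discussion, while harmless, is not actually needed.
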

\begin{proof}
We start by noticing that $\Horg{\varphibf}{\Lsp}$ consists of those $\xibf \in T_\varphibf\OB$ that satisfy \mbox{$\out{\ci\varphibf}{\xibf}\! = 0$}, while $\xibf \in \Horg{\varphibf}{g}$ is equivalent to $\xibf \in T_\varphibf\OB$ and $\coout{\calG_\varphibf(\ci\varphibf)}{\xibf} = 0$. This immediately implies the "if" part. 
For the other direction, let $\varsigma_j = N_j^{-1}\langle\calG_{\varphibf,j}(\ci\varphi_j),\ci\varphi_j\rangle > 0$ for $j = 1, \dots p$. For any $\zbf = (z_1, \dots, z_p) \in T_\varphibf\OB$, it holds that $z_j = N_j^{-1}(\ci\varphi_j,z_j)_{L^2}\ci\varphi_j + z_j^\perp$, where $(\ci\varphi_j,z_j^\perp)_{L^2} = \langle\calG_\varphibf(\ci\varphi_j),z_j^\perp\rangle = 0$. Then we have
$
    \langle\calG_{\varphibf,j}(\ci\varphi_j),z_j\rangle
    = \varsigma_j(\ci\varphi_j,z_j)_{L^2},
$
and therefore $\ci\varphi_j$ is an eigenfunction of $\calG_{\varphibf,j}$ on $T_{\varphi_j}\Sphere_{N_j}$ to the eigenvalue $\varsigma_j$.
\end{proof}

If $g_\varphibf$ is bounded on $\Hsp$ and coercive with respect to the $\Hsp$-norm, the linear operator~$\calG_\varphibf$ is invertible. Using this, we can write the \emph{$g_\varphibf$-orthogonal projection} onto the tangent space $T_\varphibf\OB$ at $\varphibf \in \OB$ as 
\begin{equation}\label{eq:projection}
    \calP_\varphibf^g \vbf = \vbf - \calG_\varphibf^{-1}\varphibf\out{\varphibf}{\vbf}\out{\varphibf}{\calG_\varphibf^{-1}\varphibf}^{-1}, \qquad \vbf\in\Hsp.
\end{equation}
Although the $\Lsp$-metric is not $\Hsp$-coercive, we analogously define the $\Lsp$-orthogonal projection onto 
$T_\varphibf\OB$ as $\calP_\varphibf^\Lsp \vbf = \vbf - \varphibf\out{\varphibf}{\vbf}N^{-1}$.


\section{Quasi-uniqueness of ground states within the quotient manifold framework}
\label{sec:uniqueness}
To specifically address the non-uniqueness of the ground state arising from rotational symmetry of the energy functional $\,\calE$, we make use of the quotient manifold $\Seg$, which allows us to reformulate the constrained minimization problem \eqref{eq:min}~as
\begin{equation}\label{eq:min_quotient}
    \min_{[\varphibf]\in\Seg} \widetilde{\calE}([\varphibf]),
\end{equation}
where the cost functional $\,\widetilde\calE:\Seg\to\R$ is induced by $\,\calE$ as 
$\widetilde{\calE}\big([\varphibf]\big) = \calE\big(\pi^{-1}([\varphibf])\big)$, 
$\calE(\varphibf)=\widetilde\calE\big(\pi(\varphibf)\big)$.
The existence of a~global minimizer to \eqref{eq:min} established in Theorem \ref{thm:existence} implies that \eqref{eq:min_quotient} has also a~solution, but now we can hope for local uniqueness, as we shall examine in this section.

Intuitively, movements in the vertical direction do not affect the constrained energy $\,\calE$. Mathe\-matically, this leads to the non-invertibility of the Lagrangian Hessian
$\Drm_{\varphibf\varphibf}^2\calL(\varphibf_*,\Lambda_*)$ on the vertical space~$\calV_{\varphibf_*}$ at a~ground state~$\varphibf_*$. 
Indeed, since the pair $(\ci\varphibf_*,\Lambda_*)$ 
solves the eigenvector problem~\eqref{eq:nlevp}, and $\calB_{\varphibf_*} (\nubf,\varphibf_*)=0$ for all $\nubf \in \calV_{\varphibf_*}$, 
we obtain by using \eqref{eq:vert_space} that
\begin{align}\label{eq:D2_lambda}
\big\langle\Drm_{\varphibf\varphibf}^2\calL(\varphibf_*,\Lambda_*)\nubf,\wbf\big\rangle
& = \big\langle\Drm^2 \calE(\varphibf_*)\nubf - \nubf\Lambda_*, \wbf\big\rangle 
= \big\langle\calA_{\varphibf_*} \nubf+\calB_{\varphibf_*} (\nubf,\varphibf_*)-\nubf \Lambda_* , \wbf \big\rangle = 0.
\end{align}
To resolve this issue, we introduce the concept of a~locally quasi-unique ground state, which relies on the second-order sufficient optimality condition for a~strict local minimum. 

\begin{definition}
\label{definition-quasi-isolated-ground-state}
A ground state $\varphibf_* \in \OB$ to~\eqref{eq:min} is called \emph{locally quasi-unique} if for any smooth curve $\gamma: (-1,1) \rightarrow \OB$ with $\gamma(0)=\varphibf_*$ and $\gamma^{\prime}(0) \in T_{\varphibf_*}\OB \setminus\calV_{\varphibf_*}$, it holds that
\begin{equation}\label{eq:isolated_min}
     \frac{{\rm d}^2}{{\rm d}t^2} \calE(\gamma(t)) \Big|_{t=0}  > 0
\end{equation}
or, equivalently, if $[\varphibf_*] \in \Seg$ is a strict local minimizer of \eqref{eq:min_quotient}. 
\end{definition}

Broadly speaking, the above definition states that if any two ground states of the minimization problem~\eqref{eq:min} are locally quasi-unique, then they are either well separated or related by a~complex phase shift. Equivalently, the relation~\eqref{eq:isolated_min} states that for the ground state $\varphibf_*$ and the corresponding Lagrange multiplier $\Lambda_*$, the Hessian $\Drm^2_{\!\varphibf\varphibf} \calL(\varphibf_*,\Lambda_*)$ is strictly positive on $T_{\varphibf_*}\OB \setminus \calV_{\varphibf_*}$, i.e., for all $\zbf \in T_{\varphibf_*}\OB \setminus \calV_{\varphibf_*}$, we have
\begin{equation}\label{eq:D2Lpos}
    \big\langle\Drm^2_{\!\varphibf\varphibf} \calL(\varphibf_*,\Lambda_*)\zbf,\zbf\big\rangle = \big\langle\Drm^2\calE(\varphibf_*)\zbf - \zbf \Lambda_*, \zbf\big\rangle > 0.
 \end{equation}

For $j=1,\ldots,p$, we consider the operator $\DDEjj(\varphibf_*):H^1_0(\DC)\to H^{-1}(\DC)$ given by 
\begin{equation}\label{eq:second_derivative_componentwise}
\big\langle \DDEjj(\varphibf_*)v, w\big\rangle 
    = \big\langle \calA_{\varphibf_*,j} v + \calB_{\varphibf_*,jj}(v,\varphi_{*,j}), w \big\rangle, \qquad v,w\in H_0^1(\DC),
\end{equation}
where $\calA_{\varphibf_*,j}$ and $\calB_{\varphibf_*,jj}$ are as in \eqref{eq:calAj} and \eqref{eq:Bij}, respectively.
Then for any product metric $g_\varphibf$ on $\OB$ satisfying \eqref{eq:prod_metric}, it follows from \eqref{eq:D2Lpos} that for all $\xi\in\calH_{\varphi_{*,j}}^{g_j}\setminus\{0\}$,
\[
\big\langle\Drm^2_{\!\varphi_j\varphi_j} \calL(\varphibf_*,\Lambda_*)\xi,\xi\big\rangle
= \big\langle\Drm_{\!\varphi_j\varphi_j}^2\calE(\varphibf_*)\xi, \xi\big\rangle
    - (\lambda_{*,j} \xi, \xi)_{L^2} > 0.
\]
In Proposition~\ref{prop:coercive} below, we establish that the Hessian $\Drm^2_{\!\varphi_j\varphi_j} \calL(\varphibf_*,\Lambda_*)$ is even coercive on the horizontal space $\Horg{\varphi_{*,j}}{g_j}$. For this purpose, we consider the following eigenvalue problems: for $j=1,\ldots,p$, find $v_{i,j}\in T_{\varphi_{*,j}}\Sphere_{N_j}$ and $\lambda_{i,j} \in \mathbb{R}$ such that 
\begin{equation}\label{eq:evp4D2Ejj}
 \big\langle \DDEjj(\varphibf_*)v_{i,j}, z\big\rangle
 = (\lambda_{i,j} \, v_{i,j}, z)_{L^2} \qquad \mbox{for all} \quad z \in  T_{\varphi_{*,j}}\Sphere_{N_j}.
\end{equation}
The essential properties of the eigenpairs to these problems can be characterized as follows.
\begin{proposition}[Eigenpairs of $\DDEjj(\varphibf_*)$] 
\label{prop:simple_ev} 
 Let Assumptions~{\bf A1}--{\bf A3} be fulfilled and let \linebreak \mbox{$\varphibf_* = (\varphi_{*,1}, \ldots, \varphi_{*,p})$} be a~locally quasi-unique ground state 
 in the sense of Definition~\textup{\ref{definition-quasi-isolated-ground-state}} 
 with the corresponding Lagrange multiplier~$\Lambda_*=\diag(\lambda_{*,1},\ldots,\lambda_{*,p})$. 
 For $j =1,\ldots,p$, the eigenvalue problem \eqref{eq:evp4D2Ejj} has an~$L^2$-or\-tho\-gonal basis of eigenfunctions $v_{1,j}, v_{2,j}, \ldots$ in $T_{\varphi_{*,j}}\Sphere_{N_j}$ corresponding to the real eigenvalues $\lambda_{1,j}<\lambda_{2,j}\leq \ldots$ ordered increasingly. In addition, the smallest eigenvalue~$\lambda_{1,j}$ is simple and coincides with $\lambda_{*,j}$, i.e., $\lambda_{1,j} = \lambda_{*,j}>0$, and the corresponding eigenfunction is $v_{1,j} = \ci \varphi_{*,j}$. 
\end{proposition}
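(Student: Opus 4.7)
The plan is to handle the claim piece-by-piece: existence of the spectrum, then identification of the smallest eigenpair, then simplicity, and finally the sign of $\lambda_{*,j}$. First I would verify that the generalized eigenvalue problem \eqref{eq:evp4D2Ejj} has the structure of a classical self-adjoint compact operator problem. The bilinear form $(v,w)\mapsto \langle \DDEjj(\varphibf_*)v,w\rangle$ is symmetric (Proposition~\ref{prop:D2Eproperties}) and bounded on $H_0^1(\DC,\C)$, and coercive because $\calA_{\varphibf_*,j}$ is coercive (Proposition~\ref{prop:aphi}) and the extra term $\langle \calB_{\varphibf_*,jj}(v,\varphi_{*,j}),v\rangle=2\kappa_{jj}\int_{\calD}|\re(\varphi_{*,j}\overline v)|^2\dx$ is nonnegative by~\textbf{A3}. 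Restricting this form to the closed real-codimension-one subspace $T_{\varphi_{*,j}}\Sphere_{N_j}$ preserves coercivity, so using the Riesz isomorphism together with the compact embedding $H_0^1(\DC,\C)\hookrightarrow L^2(\DC,\C)$, the resolvent is a~compact self-adjoint operator on $T_{\varphi_{*,j}}\Sphere_{N_j}$ (viewed as a real Hilbert space with the $L^2$-inner product). The spectral theorem then yields a real, increasing, unbounded sequence of eigenvalues and an $L^2$-orthogonal basis of eigenfunctions.

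Next I would exhibit $\ci\varphi_{*,j}$ as an eigenfunction with eigenvalue $\lambda_{*,j}$. Since $\ci\varphi_{*,j}\mathbf{e}_j$ lies in the vertical space $\calV_{\varphibf_*}$, identity~\eqref{eq:D2_lambda} gives $\Drm^2_{\!\varphibf\varphibf}\calL(\varphibf_*,\Lambda_*)(\ci\varphi_{*,j}\mathbf{e}_j)=0$. Taking the $j$-th component yields $\DDEjj(\varphibf_*)(\ci\varphi_{*,j})=\lambda_{*,j}\,\ci\varphi_{*,j}$ in~$H^{-1}(\DC,\C)$, which is exactly~\eqref{eq:evp4D2Ejj} with eigenvalue $\lambda_{*,j}$; the membership $\ci\varphi_{*,j}\in T_{\varphi_{*,j}}\Sphere_{N_j}$ is immediate from $(\ci\varphi_{*,j},\varphi_{*,j})_{L^2}=\re(\ci N_j)=0$.

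For simplicity and minimality I would argue by contradiction via a curve construction. Suppose $v\in T_{\varphi_{*,j}}\Sphere_{N_j}\setminus\{0\}$ is an eigenfunction of~\eqref{eq:evp4D2Ejj} to an eigenvalue $\lambda\leq\lambda_{*,j}$ with $(v,\ci\varphi_{*,j})_{L^2}=0$ (this is automatic if $\lambda\neq\lambda_{*,j}$ by self-adjointness, and obtainable inside the eigenspace if $\lambda=\lambda_{*,j}$ has multiplicity~$\geq 2$). Then $v\mathbf{e}_j$ lies in $T_{\varphibf_*}\OB\setminus\calV_{\varphibf_*}$ because the vertical component in position~$j$ would have to be a real multiple of $\ci\varphi_{*,j}$, which is excluded by $(v,\ci\varphi_{*,j})_{L^2}=0$. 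Building a smooth curve $\gamma:(-1,1)\to\OB$ with $\gamma(0)=\varphibf_*$, $\gamma'(0)=v\mathbf{e}_j$ (e.g. a retraction-type curve), the constraint $\|\gamma_j(t)\|_{L^2}^2=N_j$ implies $(\gamma_j''(0),\varphi_{*,j})_{L^2}=-\|v\|_{L^2}^2$, and the first-order optimality gives $\Drm\calE(\varphibf_*)[\gamma''(0)]=-\lambda_{*,j}\|v\|_{L^2}^2$. Combining these yields
\[
\frac{\drm^2}{\drm t^2}\calE(\gamma(t))\Big|_{t=0}
= \langle\DDEjj(\varphibf_*)v,v\rangle-\lambda_{*,j}\|v\|_{L^2}^2
= (\lambda-\lambda_{*,j})\|v\|_{L^2}^2\leq 0,
\]
contradicting Definition~\ref{definition-quasi-isolated-ground-state}. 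This rules out both the existence of a second independent eigenfunction at $\lambda_{*,j}$ and the existence of any eigenvalue strictly below $\lambda_{*,j}$, so $\lambda_{1,j}=\lambda_{*,j}$ is simple with eigenfunction $v_{1,j}=\ci\varphi_{*,j}$.

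For positivity, I would evaluate \eqref{eq:levp1} at $w_j=\varphi_{*,j}$ to get $\lambda_{*,j}N_j=\langle\calA_{\varphibf_*,j}\varphi_{*,j},\varphi_{*,j}\rangle$. Writing this integral in the form provided by the energy decomposition \eqref{eq:energy2}, Assumption~\textbf{A2} makes the $\sR$-type quadratic part strictly positive (since $\varphi_{*,j}\neq 0$ by the mass constraint $N_j>0$) while \textbf{A3} renders the cross-interaction term nonnegative, yielding $\lambda_{*,j}>0$. The main technical point — and the only step that is not a routine spectral-theory calculation — is ensuring that the comparison curve stays inside $\OB$ while reproducing the desired tangent, so some care is needed in constructing $\gamma$ (for instance, by projection or by invoking a retraction on $\OB$) and in keeping track of the real/complex orthogonality relations; all other steps reduce to direct applications of results already established in the paper.
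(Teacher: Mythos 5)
Your proof is correct, and its skeleton --- discrete spectrum via a compact self-adjoint resolvent, identification of $(\ci\varphi_{*,j},\lambda_{*,j})$ as an eigenpair, and a contradiction with local quasi-uniqueness --- matches the paper's. The genuine difference lies in how the minimality $\lambda_{1,j}=\lambda_{*,j}$ is obtained. The paper splits the argument: it first invokes the second-order \emph{necessary} optimality condition at the global minimizer, namely $\langle\Drm^2_{\varphibf\varphibf}\calL(\varphibf_*,\Lambda_*)\zbf,\zbf\rangle\geq 0$ on all of $T_{\varphibf_*}\OB$, tested with $\zbf=(0,\ldots,0,z_j,0,\ldots,0)$, to conclude that $\lambda_{*,j}$ is the smallest eigenvalue, and it uses the strict inequality \eqref{eq:D2Lpos} coming from quasi-uniqueness only to exclude multiplicity $\geq 2$. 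You instead derive both minimality and simplicity from the strict positivity alone, applying it to an eigenfunction that is $L^2$-orthogonal to $\ci\varphi_{*,j}$ (orthogonality being automatic for $\lambda\neq\lambda_{*,j}$ by self-adjointness, and arrangeable inside the eigenspace otherwise); this is valid and slightly more economical, with the trade-off that the paper's route shows the minimality statement holds at \emph{any} ground state independently of the quasi-uniqueness hypothesis, while yours ties it to that hypothesis. Two smaller deviations: the paper works directly with the algebraic reformulation \eqref{eq:D2Lpos} of Definition~\ref{definition-quasi-isolated-ground-state}, whereas you reconstruct it through an explicit retraction-type curve --- your computation of $\tfrac{{\rm d}^2}{{\rm d}t^2}\calE(\gamma(t))\big|_{t=0}$ via the differentiated mass constraint and first-order criticality is precisely the equivalence the paper asserts without proof, so you are filling in a detail rather than leaving a gap; and you obtain $\lambda_{*,j}>0$ by evaluating $\lambda_{*,j}N_j=\langle\calA_{\varphibf_*,j}\varphi_{*,j},\varphi_{*,j}\rangle\geq\|\varphi_{*,j}\|_{\sR}^2>0$, while the paper reads positivity off from the coercivity of $\DDEjj(\varphibf_*)$; both are sound.
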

\begin{proof} 
Let $\varphibf_*\in\OB$ be a ground state. 
Note that the operator $\DDEjj(\varphibf_*)$ in \eqref{eq:second_derivative_componentwise} is symmetric and coercive on~$H_0^1(\mathcal{D}, \mathbb{C})$ and, hence, on the tangent space $T_{\varphi_{*,j}}\Sphere_{N_j} \subset H_0^1(\mathcal{D}, \mathbb{C})$. Therefore, it has countably infinite number of real eigenvalues and the corresponding eigenfunctions form an~\mbox{$L^2$-ortho}gonal basis in $T_{\varphi_{*,j}}\Sphere_{N_j}$.

For all $z\in T_{\varphi_{*,j}}\Sphere_{N_j}$, we further have 
\begin{align*}
\big\langle\DDEjj(\varphibf_*)(\ci\varphi_{*,j}), z\big\rangle
   & \!= \big\langle\calA_{\varphibf_*,j} (\ci\varphi_{*,j})\! + \!\calB_{\varphibf_*,jj}(\ci\varphi_{*,j},\varphi_{*,j}), z\big\rangle
    \!= (\lambda_{*,j}(\ci\varphi_{*,j}), z)_{L^2},
\end{align*}
which shows that $(\ci\varphi_{*,j}, \lambda_{*,j})$ is an~eigenpair to \eqref{eq:evp4D2Ejj}.
Since $\varphibf_*$ is a global minimizer of $\,\calE$ on $\OB$, the second-order necessary optimality condition implies that $\big\langle\Drm^2_{\!\varphibf\varphibf} \calL(\varphibf_*,\Lambda_*)\zbf,\zbf\big\rangle\geq 0$ for all \mbox{$\zbf\in T_{\varphibf_*}\OB$}. Then, for any $z_j\in T_{\varphi_{*,j}}\Sphere_{N_j}$, $\zbf_j = (0,\ldots, 0,z_j, 0, \ldots, 0)$ belongs to $ T_{\varphibf_*}\,\OB$ and therefore 
\[
\big\langle\DDEjj(\varphibf_*)z_j,z_j\big\rangle - (\lambda_{*,j} z_j, z_j)_{L^2}
= \big\langle\Drm^2_{\varphibf\varphibf}\calL(\varphibf_*,\Lambda_*)\zbf_j,\zbf_j\big\rangle\geq 0.
\]
This implies that $\lambda_{1,j} = \lambda_{*,j} >0$ is the smallest eigenvalue, and $v_{1,j}=\ci\varphi_{*,j}\in\calV_{\varphi_{*,j}}$ is the corresponding eigenfunction.

We prove the simplicity of the smallest eigenvalue $\lambda_{1,j} =\lambda_{*,j}$ by contradiction. Let us assume that for some \mbox{$j \in \{1,2,\ldots,p\}$}, the smallest eigenvalue $\lambda_{*,j}$ of the operator $\DDEjj(\varphibf_*) \vert_{T_{\varphi_{*,j}}\Sphere_{N_j}}$ has multiplicity greater than one, i.e., there exists an~eigenfunction $w_j \in T_{\varphi_{*,j}}\Sphere_{N_j}\setminus\mathcal{V}_{\varphi_{*,j}}$ to problem~\eqref{eq:evp4D2Ejj} such that $\big\langle\DDEjj(\varphibf_*)w_j,w_j\big\rangle - (\lambda_{*,j}w_j, w_j)_{L^2} =0$. Then for $\wbf = (0,\ldots,0,w_j,0,\ldots,0)$ belonging to $T_{\varphibf_*}\OB \setminus \calV_{\varphibf_*}$, we have 
$\big\langle\Drm^2 \calE(\varphibf_*)\wbf,\wbf\big\rangle - (\wbf \Lambda_*, \wbf)_L = 0$. This contradicts the fact that~$\varphibf_*$ is a~locally quasi-unique ground state.
\end{proof}

\begin{remark}
Assuming that $K$ is positive definite and observing that $\calB_{\varphibf_*,ij}(\ci\varphi_{*,j},\varphi_{*,i}) = 0$ for $i, j = 1, \dots, p$, a~similar result to Proposition~\ref{prop:simple_ev} holds true for the eigenpairs of the full operator $\Drm^2\calE(\varphibf_*)$ restricted to $T_{\varphibf_*}\OB$. Its smallest eigenvalue is $\lambda_{*,\text{min}} = \min_{1\leq j \leq p} \lambda_{*,j}$, and the corresponding eigenspace is spanned by $(0, \dots, 0, \ci\varphi_j, 0, \dots, 0) \in \calV_{\varphibf_*}$ for all $j$ such that \mbox{$\lambda_{*,j} = \lambda_{*,\text{min}}$}.
\end{remark}

The eigenvalue property in Proposition~\ref{prop:simple_ev} will be verified in numerical experiments. This property is also crucial to prove the coercivity of $\Drm^2_{\!\varphi_j\varphi_j} \calL(\varphibf_*,\Lambda_*)$ on the horizontal space $\Horg{\varphi_{*,j}}{g_j}$. 
\begin{proposition}[Coercivity of $\DDLjj(\varphibf_*,\Lambda_*)$] \label{prop:coercive} 
Let Assumptions~{\bf A1}--{\bf A3} be fulfilled and let~$\varphibf_*$ be a~locally quasi-unique  ground state in the sense of Definition~\textup{\ref{definition-quasi-isolated-ground-state}}  with the corresponding Lagrange multiplier~$\Lambda_*$. Then, for all $j=1,\ldots,p$, the Hessian $\DDLjj(\varphibf_*,\Lambda_*)$ is coercive on the horizontal space $\Horg{\varphi_{*,j}}{g_j}$, i.e., there exists a~constant $\alpha_j>0$ such that
\[
\big\langle\Drm^2_{\varphi_j\varphi_j} \calL(\varphibf_*,\Lambda_*)\xi,\xi\big\rangle
\geq  \alpha_j \|\xi\|_{H^1_0}^2 \qquad \mbox{for all } \quad \xi \in \Horg{\varphi_{*,j}}{g_j}. 
\]
\end{proposition}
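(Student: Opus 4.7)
The plan is to combine the spectral gap established in Proposition~\ref{prop:simple_ev} with the $H^1_0$-coercivity of the block $\DDEjj(\varphibf_*)$, and then transfer the resulting lower bound from the $L^2$-orthogonal complement of $\ci\varphi_{*,j}$ to the $g_j$-horizontal space $\Horg{\varphi_{*,j}}{g_j}$.

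Fix $j$ and set $q(\xi) := \langle \DDLjj(\varphibf_*,\Lambda_*)\xi,\xi\rangle$. Since $\Horg{\varphi_{*,j}}{g_j} \subset T_{\varphi_{*,j}}\Sphere_{N_j}$, every $\xi \in \Horg{\varphi_{*,j}}{g_j}$ admits an $L^2$-orthogonal splitting $\xi = c_1\,\ci\varphi_{*,j} + \xi_\perp$, where $c_1 = (\xi,\ci\varphi_{*,j})_{L^2}/N_j$ and $(\xi_\perp,\ci\varphi_{*,j})_{L^2}=0$. The first step is to show $q(\xi) = q(\xi_\perp)$: noting that $\ci\varphi_{*,j}$ is an eigenfunction of $\calA_{\varphibf_*,j}$ with eigenvalue $\lambda_{*,j}$ on all of $H^1_0(\mathcal{D},\mathbb{C})$ and that $\calB_{\varphibf_*,jj}(\ci\varphi_{*,j},\varphi_{*,j})=0$, one gets $\DDLjj(\varphibf_*,\Lambda_*)(\ci\varphi_{*,j})=0$ in $H^{-1}(\mathcal{D},\mathbb{C})$, so both the mixed term and the $c_1^2$ diagonal term in the expansion of $q(\xi)$ vanish.

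Next, I would prove coercivity of $q$ in the $H^1_0$-norm on the slice $W := \{v \in T_{\varphi_{*,j}}\Sphere_{N_j} : (v,\ci\varphi_{*,j})_{L^2}=0\}$. Expanding $\xi_\perp$ in the $L^2$-orthonormal eigenbasis $\{v_{k,j}\}_{k \ge 1}$ from Proposition~\ref{prop:simple_ev} (with $v_{1,j}=\ci\varphi_{*,j}$ excluded) and invoking the spectral gap $\lambda_{2,j} > \lambda_{*,j}$ yields $\langle \DDEjj(\varphibf_*)\xi_\perp,\xi_\perp\rangle \ge \lambda_{2,j}\|\xi_\perp\|_{L^2}^2$, hence $q(\xi_\perp) \ge (\lambda_{2,j}-\lambda_{*,j})\|\xi_\perp\|_{L^2}^2$. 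To upgrade to $H^1_0$, I would split $q(\xi_\perp) = \epsilon\langle \DDEjj(\varphibf_*)\xi_\perp,\xi_\perp\rangle + ((1-\epsilon)\langle \DDEjj(\varphibf_*)\xi_\perp,\xi_\perp\rangle - \lambda_{*,j}\|\xi_\perp\|_{L^2}^2)$ with $\epsilon \in (0, 1-\lambda_{*,j}/\lambda_{2,j})$; the first summand is bounded below by $\epsilon c_H \|\xi_\perp\|_{H^1_0}^2$ via the $H^1_0$-coercivity of $\DDEjj(\varphibf_*)$ (which itself follows from Proposition~\ref{prop:aphi} together with $\kappa_{jj} \ge 0$ from Assumption~\textbf{A3}), while the second summand is non-negative by the choice of $\epsilon$. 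This delivers $q(\xi_\perp) \ge \alpha \|\xi_\perp\|_{H^1_0}^2$ for some $\alpha > 0$.

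Finally, I would transfer the bound to all of $\Horg{\varphi_{*,j}}{g_j}$ by controlling $c_1$ through the horizontality constraint $g_{j,\varphi_{*,j}}(\xi,\ci\varphi_{*,j})=0$. Solving for $c_1$ and using the $H^1_0$-boundedness of $g_j$ together with $g_{j,\varphi_{*,j}}(\ci\varphi_{*,j},\ci\varphi_{*,j}) > 0$ gives $|c_1| \le C\|\xi_\perp\|_{H^1_0}$, whence $\|\xi\|_{H^1_0} \le C'_j \|\xi_\perp\|_{H^1_0}$. Combined with the previous step, this yields the claim with $\alpha_j = \alpha/(C'_j)^2 > 0$. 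The main obstacle I anticipate is precisely this last step: the constant $\alpha_j$ depends on the metric $g_j$ via its $H^1_0$-boundedness and non-degeneracy on the vertical direction, so it must be checked that these structural properties are indeed part of the operational hypotheses when the general framework is specialized to the energy-adaptive and Lagrangian-based metrics later in Section~\ref{sec:RGD_loc}.
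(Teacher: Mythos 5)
Your proof is correct, and it rests on the same two pillars as the paper's own argument: the spectral gap $\lambda_{2,j}>\lambda_{1,j}=\lambda_{*,j}$ with eigenfunction $\ci\varphi_{*,j}$ from Proposition~\ref{prop:simple_ev}, and an $L^2$-to-$H^1_0$ upgrade exploiting the $H^1_0$-coercivity of $\DDEjj(\varphibf_*)$. Where you genuinely differ is in how the $g_j$-dependence of the horizontal space is handled, which is the actual crux of this proposition. The paper works directly with $\xi\in\Horg{\varphi_{*,j}}{g_j}$ and introduces the $L^2$-angle constant $C_{g_j}=\sup_{\xi\in\Horg{\varphi_{*,j}}{g_j}\setminus\{0\}}(\ci\varphi_{*,j},\xi)_{L^2}/(N_j^{1/2}\|\xi\|_{L^2})<1$, which yields $\big\langle\DDLjj(\varphibf_*,\Lambda_*)\xi,\xi\big\rangle\geq(1-C_{g_j}^2)(\lambda_{2,j}-\lambda_{1,j})\|\xi\|_{L^2}^2$, and then cites \cite[Lemma 2.3]{HenY25} for the passage to the $H^1_0$-norm. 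You instead split $\xi=c_1\,\ci\varphi_{*,j}+\xi_\perp$ $L^2$-orthogonally, use the exact kernel identity $\DDLjj(\varphibf_*,\Lambda_*)(\ci\varphi_{*,j})=0$ (the component-wise analogue of \eqref{eq:D2_lambda}) to reduce to $q(\xi)=q(\xi_\perp)$, carry out the $\epsilon$-splitting argument explicitly on the $L^2$-orthogonal slice, and transfer back via $\|\xi\|_{H^1_0}\leq C_j'\|\xi_\perp\|_{H^1_0}$, at the cost of requiring $H^1_0$-boundedness of $g_j$ and its positivity on $\ci\varphi_{*,j}$. The trade-off is instructive: the paper's route nominally needs no boundedness of the metric, but its strict bound $C_{g_j}<1$ is justified only by ``closed subspaces with trivial intersection'', which in infinite dimensions is delicate ($H^1_0$-closedness of $\Horg{\varphi_{*,j}}{g_j}$ does not by itself prevent $\ci\varphi_{*,j}$ from lying in its $L^2$-closure); under horizontal compatibility (Assumption~\textbf{B3}) one simply has $C_{g_j}=0$ and the issue disappears. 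Your version avoids any $L^2$-angle condition altogether, and the structural hypotheses you flag at the end --- boundedness of the metric and non-degeneracy on the vertical direction --- are exactly Assumption~\textbf{B1} together with the positive definiteness inherent in a Riemannian metric, so they are indeed available for both the energy-adaptive and the Lagrangian-based metrics in Section~\ref{sec:RGD_loc}; your anticipated obstacle is therefore not a gap.
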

\begin{proof} 
According to Proposition~\ref{prop:simple_ev}, we know that $\lambda_{*,j} = \lambda_{1,j}$ is the smallest (simple) eigenvalue of $ \DDEjj(\varphibf_*)$ on the tangent space $T_{\varphi_{*,j}}\Sphere_{N_j}$, and $\ci \varphi_{*,j}\in\calV_{\varphi_{*,j}}$ is the corresponding eigenfunction. Therefore, for all $\xi \in \Horg{\varphi_{*,j}}{L^2}$, we can write
\begin{equation}
\label{eq:D2E_coercivity_1}
\big\langle\DDEjj(\varphibf_*)\xi, \xi\big\rangle
\geq \lambda_{2,j} \|\xi\|_{L^2}^2
\end{equation}
with the second smallest eigenvalue $\lambda_{2,j}$ of $\DDEjj(\varphibf_*)$ on $T_{\varphi_{*,j}}\Sphere_{N_j}$. 
As $\calV_{\varphi_{*,j}}$ and $\Horg{\varphi_{*,j}}{g_j}$ are two closed subspaces with trivial intersection, there exists a constant $0 \leq C_{g_j} < 1$ such that $C_{g_j} =\sup_{\xi \in \Horg{\varphi_{*,j}}{g_j}\setminus \{0\}} \frac{(\ci\varphi_{*,j}, \xi)_{L^2}}{N_j^{1/2}\|\xi\|_{L^2}}$. Using that $\ci\varphi_{*,j}$ is an eigenfunction of $\DDEjj(\varphibf_*)$ to $\lambda_{1,j}$ together with \eqref{eq:D2E_coercivity_1}, we obtain for all $\xi \in \Horg{\varphi_{*,j}}{g_j}$ that 
\begin{align*}
\big\langle\DDEjj(\varphibf_*)\xi, \xi\big\rangle
& \geq \lambda_{1,j}(\ci\varphi_{*,j}, \xi)_{L^2}^2/N_j + \lambda_{2,j} \big(\|\xi\|_{L^2}^2 - (\ci\varphi_{*,j}, \xi)^2_{L^2}/N_j\big) \\
& \geq \big(\lambda_{2,j} - C_{g_j}^2(\lambda_{2,j} - \lambda_{1,j})\big)\|\xi\|_{L^2}^2,
\end{align*}
and therefore $\langle \Drm^2_{\!\varphi_j\varphi_j}\calL(\varphibf_*,\Lambda_*)\xi, \xi \rangle\geq (1 - C_{g_j}^2)(\lambda_{2,j}-\lambda_{1,j})\|\xi\|_{L^2}^2$.
Now along the same lines as in the single-component case \cite[Lemma 2.3]{HenY25}, we can establish the required coercivity. 
\end{proof}

We conclude this section with the following proposition, which establishes the inf–sup stability of the whole Hessian $\Drm^2_{\!\varphibf\varphibf}\calL(\varphibf_*,\Lambda_*)$ on~$\Horg{\varphibf_*}{g}$. This property will play a~central role in the subsequent local convergence analysis.

\begin{proposition}[Inf-sup stability of $\Drm^2_{\!\varphibf \varphibf}\calL(\varphibf_*,\Lambda_*)$]\label{prop:infsup}
Let Assumptions~{\bf A1}--{\bf A3} be fulfilled and let $K$ be positive definite. Further, let $\varphibf_*  \in \OB$ be a locally quasi-unique ground state  with the corresponding Lagrange multiplier $\Lambda_* = \diag(\lambda_{*,1}, \dots, \lambda_{*,p})$.
Then the operator $\Drm^2_{\varphibf \varphibf}\calL(\varphibf_*,\Lambda_*)$ is inf-sup stable on $\Horg{\varphibf_*}{g}$, meaning that there exists a constant $\beta >0$ such that
\begin{equation}\label{eq:infsup}
\myinf_{\vphantom{d}\zetabf \in \Horg{\varphibf_*}{g}\setminus\{\zerobf\}}\; \sup_{\xibf \in \Horg{\varphibf_*}{g}\setminus\{\zerobf\}}\frac{\langle \Drm^2_{\!\varphibf \varphibf}\calL(\varphibf_*,\Lambda_*)\xibf, \zetabf \rangle}{\|\xibf\|_{\Hsp} \| \zetabf\|_{\Hsp}} \geq \beta.
\end{equation}
\end{proposition}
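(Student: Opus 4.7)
The plan is to combine a G\aa rding-type inequality with the Fredholm alternative for symmetric bilinear forms. Writing
\[
a(\xibf,\zetabf) := \langle\Drm^2_{\varphibf\varphibf}\calL(\varphibf_*,\Lambda_*)\xibf,\zetabf\rangle = \langle\Drm^2\calE(\varphibf_*)\xibf,\zetabf\rangle - (\xibf\Lambda_*,\zetabf)_\Lsp,
\]
the positive definiteness of $K$ together with Proposition~\ref{prop:D2Eproperties} yields $\langle\Drm^2\calE(\varphibf_*)\xibf,\xibf\rangle\geq\|\xibf\|_{\sR}^2\geq c_1^2\|\xibf\|_\Hsp^2$, while the Lagrange-multiplier term is controlled by $\max_j\lambda_{*,j}\|\xibf\|_\Lsp^2$. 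This gives the G\aa rding inequality $a(\xibf,\xibf)\geq c_1^2\|\xibf\|_\Hsp^2-\max_j\lambda_{*,j}\|\xibf\|_\Lsp^2$ valid on all of $\Hsp$, and hence on the closed subspace $\calH_{\varphibf_*}^g\subset\Hsp$.

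Viewing the restriction of $a$ to $\calH_{\varphibf_*}^g\times\calH_{\varphibf_*}^g$ as a bounded symmetric operator $\calT:\calH_{\varphibf_*}^g\to(\calH_{\varphibf_*}^g)^\star$, I would split $\calT=\calT_0+\calK$ in accordance with the G\aa rding decomposition: $\calT_0$, induced by $\langle\Drm^2\calE(\varphibf_*)\cdot,\cdot\rangle$, is $\Hsp$-coercive on $\calH_{\varphibf_*}^g$ (hence an isomorphism by Lax--Milgram), and $\calK$ is induced by the $\Lsp$-continuous form $-(\cdot\,\Lambda_*,\cdot)_\Lsp$. Rellich compactness of $\Hsp\hookrightarrow\Lsp$ then makes $\calK$ compact, so $\calT$ is Fredholm of index zero. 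By the standard inf-sup characterization for symmetric problems (Banach--Ne\v{c}as--Babu\v{s}ka), the estimate~\eqref{eq:infsup} is equivalent to bijectivity of $\calT$, which by the Fredholm alternative reduces to $\ker\calT=\{\zerobf\}$. If $\xibf\in\calH_{\varphibf_*}^g$ satisfies $a(\xibf,\zetabf)=0$ for all $\zetabf\in\calH_{\varphibf_*}^g$, then the choice $\zetabf=\xibf$ forces $a(\xibf,\xibf)=0$; for $\xibf\neq\zerobf$, the direct-sum decomposition $T_{\varphibf_*}\OB=\calV_{\varphibf_*}\oplus\calH_{\varphibf_*}^g$ places $\xibf$ in $T_{\varphibf_*}\OB\setminus\calV_{\varphibf_*}$, contradicting the strict positivity~\eqref{eq:D2Lpos} supplied by the local quasi-uniqueness of $\varphibf_*$.

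The kernel-triviality step is the main obstacle, since it is the one place where the two key hypotheses of the proposition genuinely interact: positive definiteness of $K$ supplies the G\aa rding coercivity, while local quasi-uniqueness supplies the strict positivity needed to exclude a nontrivial kernel. One additionally has to verify that $\calH_{\varphibf_*}^g$ is closed in $\Hsp$ (which holds because it is the $g_\varphibf$-orthogonal complement of the finite-dimensional vertical space $\calV_{\varphibf_*}$), so that the Fredholm machinery applies to~$\calT$ and the inf-sup constant $\beta>0$ in~\eqref{eq:infsup} is delivered by the open-mapping theorem applied to the bijection $\calT$.
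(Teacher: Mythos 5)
Your proposal is correct and follows essentially the same route as the paper's proof: a G{\aa}rding inequality on $\Horg{\varphibf_*}{g}$ (coercivity of $\Drm^2\calE(\varphibf_*)$ from the positive definiteness of $K$, perturbed by the $\Lsp$-continuous Lagrange-multiplier term), injectivity from the strict positivity~\eqref{eq:D2Lpos} supplied by local quasi-uniqueness, the Fredholm alternative to upgrade this to bounded invertibility, and finally the standard equivalence between bounded invertibility and the inf-sup condition. The only difference is one of detail: you unpack into explicit steps (the compact splitting $\calT_0+\calK$ via Rellich, closedness of the horizontal space, the Banach--Ne\v{c}as--Babu\v{s}ka characterization) what the paper compresses into citations.
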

\begin{proof}
    As $\Drm^2_{\!\varphibf \varphibf}\calL(\varphibf_*,\Lambda_*)$ is bounded on $\Hsp$ and positive definite on $\Horg{\varphibf_*}{g}$, we conclude that the operator $\Drm^2_{\varphibf \varphibf}\calL(\varphibf_*,\Lambda_*) : \Horg{\varphibf_*}{g} \rightarrow(\Horg{\varphibf_*}{g})^{\star}$ is bounded and injective. Moreover,  for all $\xibf\in \Horg{\varphibf_*}{g}$, we have
    \begin{equation}
\big\langle \Drm^2_{\varphibf \varphibf}\calL(\varphibf_*,\Lambda_*)\xibf , \xibf \big\rangle  
= \big\langle \calA_{\varphibf_*} \xibf + \calB_{\varphibf_*} (\xibf,\varphibf_*), \xibf \big\rangle 
- (\xibf \Lambda_* , \xibf)_\Lsp \\
 \nonumber \geq \alpha \|\xibf\|^2_{\Hsp} - \|\Lambda_{*}\|_2 \|\xibf\|^2_{\Lsp}
\end{equation}
 with a constant $\alpha>0$, i.e., the operator $\Drm^2_{\varphibf \varphibf}\calL(\varphibf_*,\Lambda_*)$ satisfies a G{\aa}rding inequality on $\Horg{\varphibf_*}{g}$. Therefore, the Fredholm alternative guarantees that $\Drm^2_{\varphibf \varphibf}\calL(\varphibf_*,\Lambda_*)$ is invertible and has a bounded inverse \cite{Spe15}. Then the inf-sup condition~\eqref{eq:infsup} follows from \cite[Th.~2.1.44]{SautS11}. 
\end{proof}


\section{Riemannian gradient-type methods and their convergence analysis} \label{sec:RGD_loc}
In the context of optimization on Riemannian manifolds, the choice of a~Riemannian metric plays a~central role. Using the Hamiltonian~$\calA_\varphibf$ to define an~energy-adaptive metric depending on the current iteration naturally incorporates a~problem-specific structure into Riemannian optimization algorithms and acts as an~effective form of preconditioning. This idea has previously been explored with great success in ground state computations \cite{AHPS25,AltPS22,HenP20,HenY25}. To further accelerate convergence, it is appealing to incorporate second-order information into the metric for a~stronger preconditioning effect \cite{FenT25,GaoPY25,MisS16}. In this section, we present the Riemannian gradient method for a~broad class of metrics on both the generalized oblique manifold $\OB$ as well as the quotient manifold~$\Seg$ and analyze its local convergence properties. We then introduce two particular metrics, resulting in the energy-adaptive and Lagrangian-based Riemannian gradient descent methods, which correspond to the two ideas outlined above.

%
%
\subsection{Riemannian gradient descent method}\label{ssec:RGD}
We start by equipping the generalized ob\-lique manifold $\OB$ with a Riemannian metric $g_\varphibf$ which satisfies  the following assumptions:
\vspace{0.01em}
\begin{itemize}[itemsep=0.2em]
\item[\bf{B1:}] $g_{\varphibf}$ is defined on $\Hsp$, bounded and coercive with respect to the $\Hsp$-norm;
\item[\bf{B2:}] $g_\varphibf$ is a~phase invariant product metric as defined in \eqref{eq:prod_metric} and \eqref{eq:rotinvar};
\item[\bf{B3:}] $g_\varphibf$ is horizontally compatible with the $\Lsp$-metric at every ground state $\varphibf_*$ in the sense of Definition~\ref{def:hor_compat}. 
\end{itemize}
\vspace{0.4em}
The last assumption may appear rather restrictive. However, in Sections~\ref{ssec:eaRGD} and~\ref{ssec:LagrRGD} below, we will demonstrate that it is satisfied for two relevant and practically important metrics.

Similarly to the non-rotating case \cite{AHPS25}, we compute the \emph{Riemannian gradient} of the energy functional $\,\calE$ at $\varphibf\!\in\!\OB$ with respect to~$g_\varphibf$~as
\[
   \grad_g{\calE}(\varphibf) 
    = \calP_\varphibf^g(\calG_\varphibf^{-1}\calA_\varphibf\varphibf)
        = \calG_{\varphibf}^{-1}\big(\calA_\varphibf\varphibf - \varphibf\,\Sigma_g(\varphibf)\big),
\]
where $\calP_\varphibf^g$ is as in \eqref{eq:projection}, and $\Sigma_g(\varphibf) = \out{\varphibf}{\calG_{\varphibf}^{-1}\calA_\varphibf \varphibf}\out{\varphibf}{\calG_{\varphibf}^{-1}\varphibf}^{-1} \in \DpR$. It will play a~central role in the design and analysis of the optimization schemes.

Let $\calH_\varphibf^g$ denote the horizontal space at $\varphibf$ with respect to the metric $g_\varphibf$. We can use horizontal lifts in $\calH_\varphibf^g$ to define an~induced metric on the quotient manifold~$\,\Seg$. 
Indeed, Proposition~\ref{prop:phase_lift} ensures that the following induced Riemannian metric on $\,\Seg$ is well defined:
\begin{equation*}
    \widetilde g_{[\varphibf]}(\widetilde\ybf, \widetilde\zbf) := g_{\varphibf}(\widetilde\ybf_\varphibf^{\hrm,g},\widetilde\zbf_\varphibf^{\hrm,g}) \qquad \text{for all }\, \widetilde\ybf,\widetilde\zbf\in T_{[\varphibf]}\Seg,
\end{equation*}
where $\widetilde\ybf_\varphibf^{\hrm,g},\widetilde\zbf_\varphibf^{\hrm,g} \in \Horg{\varphibf}{g}$ are the horizontal lifts
of $\widetilde\ybf$ and $\widetilde\zbf$, respectively.
The horizontal lift of the Riemannian gradient $\grad_{\widetilde g}\widetilde{\calE}([\varphibf]) \in T_{[\varphibf]}\Seg$ with respect to this metric is then given by 
\[
    \big( \grad_{\widetilde g}\widetilde{\calE}([\varphibf]) \big)_{\varphibf}^{\hrm,g} 
        = \grad_g\calE(\varphibf).
\]
We can utilize this representation to perform optimization on the quotient manifold $\,\Seg$ via its counterparts on the generalized oblique manifold $\OB$. Any optimization algorithm that converges to a~minimizer on $\Seg$ naturally yields a~ground state \mbox{$\varphibf_*\! \in\! \OB$} by selecting a representative of the resulting equivalence class. In particular, convergence to a~strict local minimum ensures that the solution $\varphibf_*$ is locally quasi-unique in the sense of Definition~\ref{definition-quasi-isolated-ground-state}. In practice, all computations are carried out on (a~discretized version of) the generalized oblique manifold~$\OB$.

We now introduce a~\emph{retraction}, which allows movement in a~tangent direction while remaining on the manifold. Given $\varphibf\in\OB$ and $\zbf\in T_\varphibf\OB$, the simplest choice for a retraction on $\OB$ is the normalization of the components of $\varphibf + \zbf$, i.e., 
\begin{equation}\label{eq:retraction}
\calR_\varphibf(\zbf) = \calN(\varphibf + \zbf) 
\end{equation}
with the normalization operator $\calN(\vbf) = \vbf\out{\vbf}{\vbf}^{-1/2}N^{1/2}$ for all $\vbf \in \Hsp\setminus\{0\}$. This directly induces a~retraction on the quotient space $\,\Seg$ via the horizontal lift, which reads
\begin{equation}\label{eq:retraction_quotient}
    \widetilde{\calR}_{[\varphibf]}(\widetilde\zbf)=\pi\big(\calR_{\varphibf}(\widetilde\zbf_\varphibf^{\hrm,g})\big),  \qquad \widetilde\zbf \in T_{[\varphibf]}\,\Seg. 
\end{equation}
Note that this retraction is well defined due to Proposition~\ref{prop:phase_lift}.

The \emph{Riemannian gradient descent} (RGD) \emph{method} on the quotient manifold $\,\Seg$ is given by
\begin{equation}\label{eq:gRGD_quotient}
    [\varphibf_{k+1}] = \widetilde\calR_{[\varphibf_k]}\big( -\tau_k\, \grad_{\widetilde g}\widetilde \calE([\varphibf_k]) \big)
\end{equation}
with the step size $\tau_k > 0$. 
Using \eqref{eq:retraction_quotient}, this iteration can also be written as
\begin{equation*}
    \pi(\varphibf_{k+1}) = \pi\Big(\calR_{\varphibf_k}\Big( -\tau_k\, \big(\grad_{\widetilde g}\widetilde \calE([\varphibf_k])\big)_{\varphibf_k}^{\hrm,g} \Big)\Big) = \pi\Big(\calR_{\varphibf_k}\big( -\tau_k\, \grad_g\calE(\varphibf_k) \big)\Big).
\end{equation*}
As a~consequence, we can equivalently run the RGD iteration on the generalized oblique manifold $\OB$ given by 
\begin{equation}\label{eq:gRGD}
    \varphibf_{k+1} 
    = \calR_{\varphibf_k}\big( -\tau_k\, \grad_g\calE(\varphibf_k) \big) 
    = \calN\big(\varphibf_k -\tau_k\, \calG_{\varphibf_k}^{-1}\big(\calA_{\varphibf_k}\varphibf_k - \varphibf_k\Sigma_g(\varphibf_k)\big)\big).
\end{equation}

 Next, we extend the quantitative local convergence results from \cite{FenT25,HenP20,HenY25} for the single-component case and from \cite{AHPS25} for non-rotating multicomponent models, developed for an energy-adaptive metric, to rotating multicomponent BECs and to a~general metric $g_\varphibf$

%
%
\subsection{Local convergence analysis} \label{ssec:LocConv}
We now aim to characterize the behavior of the RGD scheme \eqref{eq:gRGD} in a~neighborhood of a~ground state and to determine its convergence rate. This section may be viewed as an~extension of \cite{FenT25} to the multicomponent setting and to the case where no additional compactness assumptions are imposed. The main idea behind our analysis is to use the convergence theory for fixed point iterations, in particular Ostrowski’s theorem \cite{Shi81}. To apply this theory, we exploit the correspondence between the phase-induced local non-uniqueness in the minimization problem~\eqref{eq:min} on $\OB$ and the local uniqueness of solutions to problem \eqref{eq:min_quotient} formulated on the quotient manifold $\Seg$. This is achieved by adopting a~fix-the-phase strategy through an~auxiliary iteration.

First, we observe that the RGD iteration \eqref{eq:gRGD} with a~constant step size $\tau_k=\tau$ can be written as the fixed point iteration $\varphibf_{k+1} = \varPsi_\tau(\varphibf_k)$, where
\begin{equation*}
\varPsi_\tau(\varphibf)
 = \calN\big(\widehat{\varPsi}_\tau(\varphibf)\big), 
\qquad
\widehat{\varPsi}_\tau(\varphibf) 
 = \varphibf - \tau\, \calG_{\varphibf}^{-1} \big(\calA_{\varphibf}\,\varphibf - \varphibf \,\Sigma_g(\varphibf) \big).
\end{equation*}
A ground state $\varphibf_*\in\OB$ is a fixed point of both $\widehat{\varPsi}_\tau$ and $\varPsi_\tau$, as we have \mbox{$\widehat{\varPsi}(\varphibf_*)\! = \varphibf_*\! = \varPsi(\varphibf_*)$}. Moreover, from Assumption~\textbf{B2}, it follows that $\varPsi_\tau(\varphibf\Theta)=\varPsi_\tau(\varphibf)\Theta$ for all $\Theta\in\DpS$. This implies, in particular, that $\varphibf_*\Theta$ is also a fixed point of $\varPsi_\tau$.

Similarly to \cite{AHPS25}, we calculate the directional derivatives of $\varPsi_\tau$ and $\widehat{\varPsi}_\tau$ at $\varphibf_*$ along $\vbf\in\Hsp$~as
\begin{align}
\Drm \varPsi_\tau(\varphibf_*)\vbf
& = \calP_{\varphibf_*}^\Lsp\big(\Drm \widehat{\varPsi}_\tau(\varphibf_*)\vbf\big), \label{eq:DPsi} \\
\Drm \widehat{\varPsi}_\tau(\varphibf_*)\vbf
& = \vbf 
    - \tau\, \calP_{\varphibf_*}^g\big(\calG_{\varphibf_*}^{-1}\Drm^2_{\!\varphibf\varphibf} \calL(\varphibf_*,\Lambda_*)\vbf\big), \label{eq:der_hatPsi}       
\end{align}
where $\Lambda_* = \Sigma_g(\varphibf_*)$ is the Lagrange multiplier corresponding to $\varphibf_*$.
For $\vbf=\ci\varphibf_*$, we obtain $\Drm \varPsi_\tau(\varphibf_*)(\ci\varphibf_*)=\ci\varphibf_*$, implying that $\ci\varphibf_*$ is an~eigenfunction of~$\Drm \varPsi_\tau(\varphibf_*)$ corresponding to the eigenvalue $1$. This is a~direct consequence of \eqref{eq:D2_lambda} and shows that the spectral radius of the $\R$-Fréchet derivative $\Drm\varPsi_\tau(\varphibf_*)$ cannot be less than $1$. Hence, unlike the non-rotational case \cite{AHPS25}, we cannot directly apply Ostrowski’s theorem to estimate the local convergence rate.
However, noticing that \mbox{$\ci\varphibf_* \in \calV_{\varphibf_*}$} and recalling that the vertical space $\calV_{\varphibf_*}$ contains all directions that stay within equivalence classes, we turn back to the RGD scheme \eqref{eq:gRGD_quotient} on the quotient manifold $\,\Seg$ and introduce the following auxiliary iteration.
\begin{definition}
\label{def:auxi-scheme-definition} 
Let $\varphibf_{*}\! \in\! \OB$ be a locally quasi-unique ground state. For all \mbox{$\vbf\! \in\! \Hsp$}~such that $\out{\vbf}{\varphibf_*}_\C$ is invertible, set $\Theta_{\varphibf_*}\!(\vbf) = \overline{\out{\vbf}{ \varphibf_*}}_\C\big|\out{\vbf}{\varphibf_*}_\C\big|^{-1} \in\DpS$ and \mbox{$\Gamma_{\varphibf_*}\!(\vbf)\! = \vbf\,\Theta_{\varphibf_*}\!(\vbf)$}. Then, for a given initial value $\psibf_0 \in \OB$, the auxiliary iteration is defined by
\begin{equation}\label{eq:auxi-scheme}
\psibf_{k+1} = \Upsilon_\tau(\psibf_k)
\end{equation}
with the auxiliary fixed point mapping $\Upsilon_\tau(\vbf) = \Gamma_{\varphibf_*}\!(\varPsi_\tau(\vbf))$.
\end{definition}
\begin{remark}\label{remark:auxi-scheme}
Note that $\Gamma_{\varphibf_*}$ serves as a fixed phase representation of equivalence classes in $\,\Seg$. To be precise, for $\psibf, \varphibf \in \OB$, the relation $\Gamma_{\varphibf_*}\!(\psibf) = \Gamma_{\varphibf_*}\!(\varphibf)$ is equivalent to $[\psibf] = [\varphibf]$, and it holds that $\Gamma_{\varphibf_*}\!(\psibf) \in [\psibf]$ as well as $\Gamma_{\varphibf_*}\!(\Gamma_{\varphibf_*}\!(\psibf)) = \Gamma_{\varphibf_*}\!(\psibf)$. Furthermore, iteration \eqref{eq:auxi-scheme} is well-defined in a~neighborhood of a~ground state $\varphibf_*$ due to $\Theta_{\varphibf_*}(\varphibf_*)=I_p,$ and among all phase shifts within $[\varphibf_*]$, the operator $\Upsilon_{\tau}$ admits the unique fixed point $\varphibf_{*}$.
\end{remark}

If the sequences $\{\varphibf_{k} \}_{k=0}^\infty$ and $\{\psibf_{k} \}_{k=0}^\infty$ generated by \eqref{eq:gRGD} and \eqref{eq:auxi-scheme}, respectively, are initialized with the same initial data, i.e., $\varphibf_{0} = \psibf_{0}$, and if $\out{\varphibf_k}{\varphibf_*}_\C$ is invertible for every $k \geq 1$, then due to~\textbf{B2}, we have     
    $\psibf_{k} = \Gamma_{\varphibf_*}\!(\varphibf_{k})$.
By Remark~\ref{remark:auxi-scheme}, this relation implies that $[\psibf_k] = [\varphibf_k]$ for all $k \geq 0$. This means that the auxiliary iteration \eqref{eq:auxi-scheme} is also a representation of the RGD iteration \eqref{eq:gRGD_quotient} on the quotient manifold $\,\Seg$, and we can directly transfer local convergence results for \eqref{eq:auxi-scheme} to \eqref{eq:gRGD}.

To analyze the local convergence of the auxiliary iteration \eqref{eq:auxi-scheme}, we need to compute the spectral radius of the $\R$-Fr\'echet derivative $\Drm\!\Upsilon_\tau(\varphibf_*)$, which determines the convergence rate. Using~\eqref{eq:DPsi} and~\eqref{eq:der_hatPsi}, we calculate for all $\vbf\in\Hsp$
\begin{align}
\Drm \!\Upsilon_\tau(\varphibf_*)\vbf & = \calP_{\ci\varphibf_*}^\Lsp(\calP_{\varphibf_*}^\Lsp(\Drm \widehat{\varPsi}_\tau(\varphibf_*)\vbf)) = \varPi_{\varphibf_*}^{\Lsp}\vbf
    - \tau\,  \varPi_{\varphibf_*}^g\calG_{\varphibf_*}^{-1}\Drm^2_{\!\varphibf\varphibf} \calL(\varphibf_*,\Lambda_*)\vbf, \label{eq:der_Uspilon}
\end{align}
where $\varPi_{\varphibf_*}^\Lsp$ and $\varPi_{\varphibf_*}^g$ are the $\Lsp$- and $g_{\varphibf_*}$-orthogonal projections on the horizontal space \mbox{$\Horg{\varphibf_*}{g} \!=\! \Horg{\varphibf_*}{\Lsp}$}. 
According to \cite[Prop.~VII.6.7]{Con10}, the spectral radius $\varrho_{\tau,*}:=\varrho(\Drm\!\Upsilon_\tau(\varphibf_*))$ is determined by the approximate point spectrum of $\Drm \!\Upsilon_\tau(\varphibf_*)$ consisting of all $\mu_i$ for which there exists a~sequence $\{\vbf_{i,k}\}_{k \in \N}\subset \Hsp$ with $\|\vbf_{i,k}\|_{\Hsp} = 1$ such that  
\begin{equation}\label{eq:app_spectrum}
    \lim_{k\to\infty} \|
    \Drm \!\Upsilon_\tau(\varphibf_*)\vbf_{i,k} - \mu_i\,\vbf_{i,k}
    \|_\Hsp = 0.
\end{equation}
As $\Pi_{\varphibf_*}^\Lsp\Drm \!\Upsilon_\tau(\varphibf_*) = \Drm \!\Upsilon_\tau(\varphibf_*)$, it is sufficient to consider $\vbf_{i,k} \in \Horg{\varphibf_*}{g} = \Horg{\varphibf_*}{\Lsp}$, and by Assumption~{\bf B1}, we can replace the $\Hsp$-norm in \eqref{eq:app_spectrum} by the one induced by $g_{\varphibf_*}$. Further, since $\Drm \!\Upsilon_\tau(\varphibf_*)$ is $g_{\varphibf_*}$-symmetric, the approximate eigenvalues are real.
Overall, for any approximate eigenvalue $\mu_i\in\R$, there exists a~$g_{\varphibf_*}$-normalized sequence $(\wbf_{i,k})_{k \in \N} \subset \Horg{\varphibf_*}{g}$ such that for any $g_{\varphibf_*}$-bounded sequence $\{\xibf_k\}_{k\in\N} \subset \Horg{\varphibf_*}{g}$, we obtain that
\begin{align}
    0 =& \lim_{k \to \infty}
    | \langle \calG_{\varphibf_*}\Drm \!\Upsilon_\tau(\varphibf_*)\wbf_{i,k}, \xibf_k\rangle
    - \mu_i \langle \calG_{\varphibf_*}\wbf_{i,k}, \xibf_k \rangle| \notag \\
    =& \lim_{k \to \infty}
    | \langle \calG_{\varphibf_*} \wbf_{i,k}, \xibf_k \rangle 
    - \tau\langle \Drm^2_{\!\varphibf\varphibf} \calL(\varphibf_*,\Lambda_*)\wbf_{i,k}, \xibf_k \rangle
     - \mu_i \langle \calG_{\varphibf_*}\wbf_{i,k}, \xibf_k \rangle| \notag \\
     = & \lim_{k \to \infty} |\langle\Drm^2_{\!\varphibf\varphibf} \calL(\varphibf_*,\Lambda_*)\wbf_{i,k},\xibf_k\rangle 
     - \eta_i \langle \calG_{\varphibf_*}\wbf_{i,k}, \xibf_k \rangle| \label{eq:auxAppEVP},
\end{align}
where $\eta_i = (1 - \mu_i)/\tau$. Choosing $\xibf_k = \wbf_{i,k}$ yields the relation
\[
        \lim_{k\to\infty} \frac{\langle\Drm^2_{\!\varphibf\varphibf} \calL(\varphibf_*,\Lambda_*)\wbf_{i,k},\wbf_{i,k}\rangle}{\langle \calG_{\varphibf_*} \wbf_{i,k}, \wbf_{i,k} \rangle} = \eta_i.
\]
Propositions~\ref{prop:D2Eproperties} and~\ref{prop:infsup} together with Assumption~{\bf B1} imply that there are constants $c_{\varphibf_*}, C_{\varphibf_*} > 0$ independent of $\wbf_{i,k}$ such that $c_{\varphibf_*} < \eta_i < C_{\varphibf_*}$.

We are now ready to prove that the RGD method \eqref{eq:gRGD} converges locally linear in $H$ to a~ground state $\varphibf_*$ and to specify its convergence rate.
\begin{theorem}[Local convergence rate for RGD]\label{th:LocConv}
Let Assumptions~\textup{\textbf{A1}}--\textup{\textbf{A3}} and \textup{\textbf{B1}}--\textup{\textbf{B3}} be fulfilled, and let $K$ be positive definite. Further, let \mbox{$\varphibf_*\in\OB$} be a locally quasi-unique ground state, and let $\eta_{\inf} = \inf_i \eta_i > c_{\varphibf_*}$ and $\eta_{\sup} = \sup_i \eta_i < C_{\varphibf_*}$, where $\eta_i$ are solutions to~\eqref{eq:auxAppEVP}.
Then the spectral radius $\varrho_{\tau,*} = \varrho(\Drm\!\Upsilon_\tau(\varphibf_*))$ satisfies 
    \begin{equation}\label{eq:specrad}
    \varrho_{\tau,*} = \sup_i |1 - \tau\eta_i|  = \max\{1 - \tau \eta_{\inf},\, \tau\eta_{\sup} - 1\}.
    \end{equation}
In particular, $\varrho_{\tau,*} < 1$ for all $\tau \in (0, \tfrac{2} {\eta_{\sup}})$. In that case, for every $\epsilon>0$, there exists a~neighborhood~$\,\mathcal{U}_\epsilon$ of $\varphibf_*$ in $\OB$ and a positive constant $C_\epsilon$ such that for all initial functions $\varphibf_0\in\mathcal{U}_\epsilon$, the~sequence $\{\varphibf_k\}_{k=0}^\infty$ generated by the RGD method \eqref{eq:gRGD} with the constant step size $\tau_k=\tau$ fulfills
\[
    \inf_{\Theta\in \D(p,\SSS)}\|\varphibf_k \Theta-\varphibf_*\|_{\Hsp}
    \leq C_\epsilon\, |\varrho_{\tau,*}+\epsilon|^k\, \|\varphibf_0-\varphibf_*\|_{\Hsp}\qquad \text{for all } k\geq 1,
\]
meaning that the RGD iteration \eqref{eq:gRGD} converges locally linear with rate~$\varrho_{\tau,*}$.   
\end{theorem}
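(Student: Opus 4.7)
The plan is to complete the spectral radius calculation begun in the excerpt, then invoke Ostrowski's fixed point theorem for the auxiliary iteration $\Upsilon_\tau$, and finally transfer the resulting local linear contraction back to the original RGD scheme via the identity $\psibf_k=\Gamma_{\varphibf_*}(\varphibf_k)$ that links the two iterations when they share an initial value.

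For the spectral radius, I would first argue that the approximate point spectrum characterizes $\varrho_{\tau,*}$. From the computation in \eqref{eq:auxAppEVP} any approximate eigenvalue has the form $\mu_i = 1-\tau\eta_i$ with $\eta_i\in(c_{\varphibf_*},C_{\varphibf_*})$, and because $\Drm\!\Upsilon_\tau(\varphibf_*)$ is $g_{\varphibf_*}$-symmetric, all these values are real and $\varrho_{\tau,*}=\sup_i|\mu_i|=\sup_i|1-\tau\eta_i|$. I would then verify the maximum formula \eqref{eq:specrad} by splitting according to the sign of $1-\tau\eta_i$: since all $\eta_i$ are strictly positive, $\sup_i(1-\tau\eta_i)=1-\tau\eta_{\inf}$ and $\sup_i(\tau\eta_i-1)=\tau\eta_{\sup}-1$, so the supremum of the absolute values equals the maximum of the two. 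The bound $\varrho_{\tau,*}<1$ for $\tau\in(0,2/\eta_{\sup})$ is then immediate, since $\tau>0$ and $\eta_{\inf}>0$ give $1-\tau\eta_{\inf}<1$, while $\tau\eta_{\sup}<2$ gives $\tau\eta_{\sup}-1<1$.

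For the local convergence step, I would invoke Ostrowski's theorem in the Banach space version of~\cite{Ost66,Shi81}. The map $\Upsilon_\tau$ is well-defined and $\mathbb{R}$-Fréchet differentiable in a neighborhood of $\varphibf_*$ (the invertibility of $\out{\vbf}{\varphibf_*}_\C$ near $\varphibf_*$ follows from $\Theta_{\varphibf_*}(\varphibf_*)=I_p$, and the normalization $\calN$ is smooth away from zero), and $\varphibf_*$ is its fixed point by Remark~\ref{remark:auxi-scheme}. With $\varrho(\Drm\!\Upsilon_\tau(\varphibf_*))=\varrho_{\tau,*}<1$, Ostrowski's theorem provides, for any $\epsilon>0$ with $\varrho_{\tau,*}+\epsilon<1$, a neighborhood $\mathcal{U}_\epsilon$ of $\varphibf_*$ in $\OB$ (equivalently an $\Hsp$-neighborhood intersected with $\OB$) and a constant $C_\epsilon>0$ such that $\|\psibf_k-\varphibf_*\|_\Hsp\leq C_\epsilon(\varrho_{\tau,*}+\epsilon)^k\|\psibf_0-\varphibf_*\|_\Hsp$ for all $k\geq 1$.

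Finally, I would transfer this estimate to the RGD iterates. Initializing $\varphibf_0=\psibf_0\in\mathcal{U}_\epsilon$ and using the phase-equivariance $\varPsi_\tau(\varphibf\Theta)=\varPsi_\tau(\varphibf)\Theta$ from Assumption~\textbf{B2} together with an inductive argument, one obtains $\psibf_k=\Gamma_{\varphibf_*}(\varphibf_k)=\varphibf_k\,\Theta_{\varphibf_*}(\varphibf_k)$ for every $k$ (after possibly shrinking $\mathcal{U}_\epsilon$ to keep $\out{\varphibf_k}{\varphibf_*}_\C$ invertible, which is ensured by continuity and the contraction of the auxiliary iteration). Taking the infimum over $\Theta\in\DpS$ gives
\[
\inf_{\Theta\in\DpS}\|\varphibf_k\Theta-\varphibf_*\|_\Hsp
\leq \|\varphibf_k\Theta_{\varphibf_*}(\varphibf_k)-\varphibf_*\|_\Hsp
= \|\psibf_k-\varphibf_*\|_\Hsp
\leq C_\epsilon(\varrho_{\tau,*}+\epsilon)^k\|\varphibf_0-\varphibf_*\|_\Hsp,
\]
which is the claimed local linear convergence. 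The main obstacle I expect is the rigorous justification of the Ostrowski step in this infinite-dimensional Hilbert setting: specifically, verifying that $\Drm\!\Upsilon_\tau(\varphibf_*)$ is a bounded linear operator on $\Hsp$ (here Assumption~\textbf{B1} is essential so that $\calG_{\varphibf_*}^{-1}$ and the projections $\varPi_{\varphibf_*}^{g},\varPi_{\varphibf_*}^{\Lsp}$ are $\Hsp$-bounded), that the supremum in \eqref{eq:specrad} really coincides with the classical spectral radius (not merely the approximate point spectrum radius, which requires the symmetry argument via $g_{\varphibf_*}$ to conclude that the approximate point spectrum fills the spectrum), and that the neighborhood $\mathcal{U}_\epsilon$ can be chosen so that the phase-alignment map $\Gamma_{\varphibf_*}$ is smooth throughout the iteration.
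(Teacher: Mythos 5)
Your proposal is correct and follows essentially the same route as the paper's own proof: the identification $\mu_i = 1-\tau\eta_i$ from the approximate point spectrum discussion preceding the theorem, Ostrowski's theorem applied to the phase-aligned auxiliary iteration $\Upsilon_\tau$, and the transfer back to the RGD iterates via $\psibf_k=\Gamma_{\varphibf_*}\!(\varphibf_k)$ with the infimum bound $\inf_{\Theta}\|\varphibf_k\Theta-\varphibf_*\|_{\Hsp}\leq\|\psibf_k-\varphibf_*\|_{\Hsp}$. The additional verifications you flag (boundedness of $\Drm\!\Upsilon_\tau(\varphibf_*)$ via Assumption~\textbf{B1}, realness of the approximate spectrum via $g_{\varphibf_*}$-symmetry, invertibility of $\out{\varphibf_k}{\varphibf_*}_\C$ near $\varphibf_*$) are exactly the points the paper handles in the discussion of Section~\ref{ssec:LocConv} before stating the theorem.
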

\begin{proof}
 By the discussion above, if $\mu_i$ is in the approximate point spectrum of $\Drm\!\Upsilon_\tau(\varphibf_*)$, then $\eta_i = (1 - \mu_i)/\tau$ solves \eqref{eq:auxAppEVP} for some $g_{\varphibf_*}$-normalized sequence $\{\wbf_{i,k}\}_{k\in\N}$.
 Since $\eta_i > c_{\varphibf_*} > 0$, we get $\rho_{\tau,*} < 1$ for all $\tau\in(0,\frac{2}{\eta_{\sup}})$.  Therefore, the local convergence estimate for the auxiliary iteration \eqref{eq:auxi-scheme} follows from Ostrowski's theorem \cite{Shi81} and can be stated as 
\begin{align*}
    \|\psibf_k-\varphibf_*\|_{\Hsp}
    \leq C_\epsilon\, |\varrho_{\tau,*}+\epsilon|^k\, \|\psibf_0-\varphibf_*\|_{\Hsp}\qquad \text{for all } k\geq 1,
    \end{align*}
Together with the relations $\psibf_0=\varphibf_0$ and  $\psibf_{k} = \Gamma_{\varphibf_*}(\varphibf_k)=\varphibf_{k}\, \Theta_{\varphibf_*}\!(\varphibf_{k})$, this estimate yields
    \begin{align*}
        \inf_{\Theta \in \D(p,\SSS)}\| \varphibf_{k} \Theta - \varphibf_*\|_{\Hsp} &\leq \| \varphibf_{k}\,\Theta_{\varphibf_*}\!(\varphibf_{k}) - \varphibf_* \|_{\Hsp} = \|\psibf_k-\varphibf_*\|_{\Hsp} \\
        & \leq  C_\epsilon\, |\varrho_{\tau,*}+\epsilon|^k\, \|\varphibf_0-\varphibf_*\|_{\Hsp}.
    \end{align*}
This completes the proof.    
\end{proof}

Under the additional assumption that $\calG_{\varphibf_*}^{-1}\calA_{\varphibf_*}$ is a~compact perturbation of the identity, we can replace the approximate eigenvalue problem \eqref{eq:auxAppEVP} by a~standard one. This condition, which resembles \cite[Assum.~{\bf(A6)}(iv)]{FenT25}, guarantees that $\calG_{\varphibf_*}$ captures enough information from~$\calA_{\varphibf_*}$.

\begin{proposition}[Point spectrum]\label{prop:point_spectrum}
Let Assumptions~\textup{\textbf{A1}}--\textup{\textbf{A3}} and \textup{\textbf{B1}}--\textup{\textbf{B3}} be fulfilled, $K$ be positive definite, and let $\varphibf_*\in\OB$ be a~locally quasi-unique ground state. If $\calG_{\varphibf_*}^{-1}\calA_{\varphibf_*} - \calI$ is compact, then the spectral radius of $\Drm\!\Upsilon_\tau(\varphibf_*)$ is determined only by its eigenvalues.
\end{proposition}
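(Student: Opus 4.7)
The plan is to exploit the fact that, under the additional assumption, the $\R$-Fréchet derivative $\Drm\!\Upsilon_\tau(\varphibf_*)$ restricted to the horizontal space $\Horg{\varphibf_*}{g}$ decomposes as
\[
\Drm\!\Upsilon_\tau(\varphibf_*)\big|_{\Horg{\varphibf_*}{g}} = (1-\tau)\calI - \tau\calK,
\]
with $\calK$ a compact self-adjoint operator on $\big(\Horg{\varphibf_*}{g}, g_{\varphibf_*}\big)$. Once this is in place, the Riesz–Schauder theorem applied to compact perturbations of a scalar multiple of the identity on an infinite-dimensional Hilbert space yields $\sigma\big(\Drm\!\Upsilon_\tau(\varphibf_*)\big) = \{1-\tau\} \cup \{\mu_i\}$, where $\{\mu_i\}$ is a countable collection of real eigenvalues of finite multiplicity accumulating only at $1-\tau$. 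Because $\calK$ is compact and self-adjoint on an infinite-dimensional space, either $0 \in \sigma_p(\calK)$ or eigenvalues of $\calK$ accumulate at $0$; in both cases $|1-\tau| \leq \sup_i |\mu_i|$, so $\varrho_{\tau,*} = \sup_i |\mu_i|$, i.e.\ it is determined purely by the point spectrum, as claimed.

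To produce the decomposition, I would start from \eqref{eq:der_Uspilon}, use Assumption~\textbf{B3} together with \eqref{eq:hor_tan} to see that $\varPi^{\Lsp}_{\varphibf_*}$ and $\varPi^g_{\varphibf_*}$ both act as the identity on $\Horg{\varphibf_*}{g}$, and substitute $\Drm^2_{\!\varphibf\varphibf}\calL(\varphibf_*,\Lambda_*)\vbf = \calA_{\varphibf_*}\vbf + \calB_{\varphibf_*}(\vbf,\varphibf_*) - \vbf\Lambda_*$ together with the assumed identity $\calG^{-1}_{\varphibf_*}\calA_{\varphibf_*} = \calI + \calK_1$ with $\calK_1$ compact. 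This gives, for $\vbf \in \Horg{\varphibf_*}{g}$,
\[
\Drm\!\Upsilon_\tau(\varphibf_*)\vbf
= (1-\tau)\vbf - \tau\,\varPi^g_{\varphibf_*}\Big(\calK_1\vbf + \calG^{-1}_{\varphibf_*}\calB_{\varphibf_*}(\vbf,\varphibf_*) - \calG^{-1}_{\varphibf_*}(\vbf\Lambda_*)\Big).
\]
The remaining task is to check that each of the three operators inside the parentheses is compact from $\Horg{\varphibf_*}{g}$ to $\Hsp$. The term $\calK_1$ is compact by hypothesis. For $\calG^{-1}_{\varphibf_*}\calB_{\varphibf_*}(\cdot,\varphibf_*)$, the explicit structure \eqref{eq:Bij} shows that $\calB_{\varphibf_*}(\cdot,\varphibf_*)$ factors through the compact Sobolev embedding $\Hsp \hookrightarrow [L^4(\calD,\C)]^p$ (valid for $d \in \{2,3\}$), yielding a compact map $\Hsp \to \Hsp^\star$; Assumption~\textbf{B1} gives that $\calG^{-1}_{\varphibf_*} : \Hsp^\star \to \Hsp$ is bounded, and composition preserves compactness. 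For $\calG^{-1}_{\varphibf_*}(\cdot\,\Lambda_*)$, the multiplication by $\Lambda_*$ is bounded from $\Hsp$ to $\Lsp$, and the Rellich–Kondrachov embedding $\Lsp \hookrightarrow \Hsp^\star$ is compact, which upon composition with $\calG^{-1}_{\varphibf_*}$ produces a compact operator. The bounded projection $\varPi^g_{\varphibf_*}$ then preserves compactness, and the sum $\calK$ is compact on $\Horg{\varphibf_*}{g}$. Self-adjointness of $\calK$ on $(\Horg{\varphibf_*}{g}, g_{\varphibf_*})$ follows from the symmetry of $\Drm^2_{\!\varphibf\varphibf}\calL(\varphibf_*,\Lambda_*)$ established in Proposition~\ref{prop:D2Eproperties} combined with the $g_{\varphibf_*}$-orthogonality of $\varPi^g_{\varphibf_*}$.

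The main obstacle is the compactness of the $\calB_{\varphibf_*}$-term, since this is the only contribution through which the nonlinearity of the energy functional enters directly and it requires the compact Sobolev embedding, explaining the role of the standing dimensional restriction $d \in \{2,3\}$. A second, more cosmetic difficulty is ensuring that the argument is conducted on $\Horg{\varphibf_*}{g}$ throughout, which is exactly what Assumption~\textbf{B3} enables, allowing $\varPi^\Lsp_{\varphibf_*}$ and $\varPi^g_{\varphibf_*}$ to be identified on this subspace and yielding a genuine operator on a Hilbert space to which Riesz–Schauder theory applies.
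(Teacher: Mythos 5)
Your proof is correct and follows essentially the same route as the paper: restrict to the horizontal space where the derivative becomes $(1-\tau)\calI-\tau\calK$, reduce everything to the compactness of $\calG_{\varphibf_*}^{-1}\Drm^2_{\!\varphibf\varphibf}\calL(\varphibf_*,\Lambda_*)-\calI$, split off the hypothesis-given compact part $\calG_{\varphibf_*}^{-1}\calA_{\varphibf_*}-\calI$, and get compactness of the $\calB_{\varphibf_*}$- and $\Lambda_*$-terms from the compact embeddings $\Hsp\hookrightarrow[L^4(\calD,\C)]^p$ and $\Hsp\hookrightarrow\Lsp$. The only (welcome) difference is that you spell out the spectral step more carefully — the self-adjoint Riesz--Schauder dichotomy guaranteeing $|1-\tau|\le\sup_i|\mu_i|$ — where the paper compresses this into the remark that projections of the identity merely shift the spectrum and delegates the final compactness argument to a cited lemma.
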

\begin{proof}
By \eqref{eq:der_Uspilon}, we have  
\[
  \Drm\!\Upsilon_\tau(\varphibf_*) = \varPi_{\varphibf_*}^{\Lsp}\vbf 
  - \tau\, \varPi_{\varphibf_*}^g\vbf
  - \tau\,  \varPi_{\varphibf_*}^g\big((\calG_{\varphibf_*}^{-1}\Drm^2_{\!\varphibf\varphibf} \calL(\varphibf_*,\Lambda_*) - \calI)\vbf\big).
\]
Recall that the spectrum of a compact operator consists only of its eigenvalues and zero and that a projection of a compact operator remains compact. As all approximate eigenfunctions of $\Drm\!\Upsilon_\tau(\varphibf_*)$ lie in the horizontal space $\Horg{\varphibf_*}{\Lsp}=\Horg{\varphibf_*}{g}$, projections of the identity only shift the spectrum. Therefore, in order to show that the spectral radius of $\Drm\!\Upsilon_\tau(\varphibf_*)$ is only affected by its eigenvalues, it suffices to show that the operator $\calG_{\varphibf_*}^{-1}\Drm_{\varphibf\varphibf}^2\calL(\varphibf_*,\Lambda_*) - \calI$ is compact. Since $\Drm_{\varphibf\varphibf}^2\calL(\varphibf_*,\Lambda_*)\vbf = \calA_{\varphibf_*}\vbf + \calB_{\varphibf_*}(\vbf,\varphibf_*) - \vbf\Lambda_*$ for all $\vbf \in \Hsp$ and $\calG_{\varphibf_*}^{-1}\calA_{\varphibf_*} - \calI$ is compact by assumption, it remains to verify that $\vbf \mapsto \calG_{\varphibf_*}^{-1}(\calB_{\varphibf_*}(\vbf,\varphibf_*) - \vbf\Lambda_*)$ is compact. We can estimate
\begin{align*}
    \|\calG_{\varphibf_*}^{-1}(\calB_{\varphibf_*}(\vbf,\varphibf_*) - \vbf\Lambda_*)\|_\Hsp & 
    \leq C_g \|(\calB_{\varphibf_*}(\vbf,\varphibf_*) - \vbf\Lambda_*)\|_{\Hsp^{\star}} \\
    & \leq C_g \sup_{\|w\|_\Hsp = 1}\langle\calB_{\varphibf_*}(\vbf,\varphibf_*), \wbf \rangle 
    + C_gC_2\|\Lambda_*\|_2\|\vbf\|_L 
    \leq C \|\vbf\|_{[L^4(\calD,\C)]^p},
\end{align*}
with constants $C_g,C_2$ and $C$ independent of $\vbf$.
Compactness now follows as in \cite[Lem.~A.1]{HenY25}.
\end{proof}

In particular, we can replace \eqref{eq:auxAppEVP} in Theorem \ref{th:LocConv} by the following eigenvalue problem: find an~eigenfunction $\vbf_i \in \Horg{\varphibf_*}{g}$ and an~eigenvalue $\eta_i \in \R$ such that
\[
    \big\langle \Drm_{\varphibf\varphibf}^2\calL(\varphibf_*,\Lambda_*)\vbf_i,\xibf\big\rangle 
    = \eta_i\langle\calG_{\varphibf_*}\vbf_i,\xibf\rangle
\qquad \text{for all \;}\xibf \in \Horg{\varphibf_*}{g}.
\]

Having established and quantified local convergence for a~general RGD method, in the next two subsections, we consider two specific metrics and apply this convergence result to the associated RGD schemes.

%
%
\subsection{Energy-adaptive Riemannian gradient descent method}\label{ssec:eaRGD}

A very natural choice of metric is the energy-adaptive metric $g_{\varphibf,a}(\ybf,\zbf) = a_\varphibf(\ybf,\zbf)=\langle\calA_\varphibf\, \ybf,\zbf\rangle$ for all $\ybf, \zbf \in T_\varphibf\OB$. It obviously satisfies Assumptions~\textbf{B1} and~\textbf{B2}. Moreover, Assumption~\textbf{B3} is fulfilled due Proposition~\ref{prop:hor_compat_eigvalue}. This choice of metric results in the energy-adaptive Riemannian gradient 
\begin{equation}\label{eq:eagradE}
\grad_a\calE(\varphibf)=\varphibf - \calA_{\varphibf}^{-1}\varphibf\out{\varphibf}{\calA_{\varphibf}^{-1}\varphibf}^{-1}N.
\end{equation}
Then the \emph{energy-adaptive Riemannian gradient descent} (eaRGD) \emph{iteration} is given by 
\begin{equation}\label{eq:eaRGD}
    \varphibf_{k+1} = \calN\big((1-\tau_k)\varphibf_k + \tau_k\,\calA_{\varphibf_k}^{-1}\varphibf_k\out{\varphibf_k}{\calA_{\varphibf_k}^{-1}\varphibf_k}^{-1}N\big).
\end{equation}
This method can be interpreted as a~damped version of the inverse subspace iteration applied to the operator $\calA_\varphibf$, where the adaptive damping is governed by the step size $\tau_k$. In the special case $\tau_k=1$, it reduces to the standard inverse subspace iteration, commonly known as the $\calA$-method, see \cite{Hen23,HenP20}.

Applying Theorem~\ref{th:LocConv} and Proposition~\ref{prop:point_spectrum} (where $\calG_{\varphibf_*}^{-1} = \calA_{\varphibf_*}^{-1}$, so the additional compactness assumption is trivially fulfilled), we find that the convergence of \eqref{eq:eaRGD} to a locally quasi-unique ground state $\varphibf_*$ is governed by the following eigenvalue problem: find an eigenfunction $\vbf_{a,i} \in \Horg{\varphibf_*}{a}$ and an eigenvalue $\eta_{a,i} \in \R$ such that
\begin{equation*}
 \big\langle \Drm_{\varphibf\varphibf}^2\calL(\varphibf_*,\Lambda_*)\vbf_{a,i}, \xibf\rangle = \eta_{a,i}\langle\calA_{\varphibf_*}\vbf_{a,i},\xibf\rangle
\qquad \text{for all }\xibf \in \Horg{\varphibf_*}{a}.
\end{equation*}
Here, $\Horg{\varphibf_*}{a}$ denotes the horizontal space at $\varphibf_*$ with respect to $g_{\varphibf_*,a}$. Using $\vbf_{a,i}$ as a test function, we estimate
\begin{align*}
\eta_{a,i} & 
    <  1+\frac{\langle\calB_{\varphibf_*}(\vbf_{a,i},\varphibf_*),\vbf_{a,i}\rangle}{\langle\calA_{\varphibf_*}\vbf_{a,i},\vbf_{a,i}\rangle} 
    = 1 + \frac{2\int_\mathcal{D}\big(\re(\varphibf_*\circ\overline{\vbf}_{a,i})\big) K \big( \re(\varphibf_*\circ\overline{\vbf}_{a,i})\big)^T\dx}{\|\vbf_{a,i}\|_{\sR}^2+\int_\mathcal{D}(\varphibf_*\circ\overline{\varphibf}_*)K(\vbf_{a,i}\circ\overline{\vbf}_{a,i})^T\dx} < 3.
\end{align*}
The last inequality follows from 
\begin{align*}
\int_\mathcal{D}\big(\re(\varphibf_*\circ\overline{\vbf}_{a,i})\big) K \big( \re(\varphibf_*\circ\overline{\vbf}_{a,i})\big)^T\dx
\le \int_{\mathcal{D}} (\varphibf_*\circ\overline{\varphibf}_*)K(\vbf_{a,i}\circ\overline{\vbf}_{a,i})^T\dx,
\end{align*} 
which can be obtained from Assumption~\textbf{A3} and Young's inequality. Therefore, we can guarantee the convergence rate $\varrho_{\tau,a,*} = \sup_{i\in\mathbb{N}}|1 - \tau\eta_{a,i}| < 1$ for any $\tau\in(0, \frac{2}{3})$, implying local convergence. Moreover, if all eigenvalues $\eta_{a,i} < 1$, then the constant step size $\tau = 1$, which corresponds to the basic inverse iteration, is admissible.

In addition to local convergence, we can even establish the global convergence of the densities $|\varphibf_k|^2$ in the eaRGD method \eqref{eq:eaRGD}, where the squared absolute value is taken component-wise. 

\begin{theorem}[Global convergence for eaRGD]\label{thm:global_convergence}
Let Assumptions~{\bf A1}--{\bf A3} be fulfilled. Let a~sequence $\{\varphibf_k\}_{k=0}^\infty\subset \OB$ be generated by the eaRGD method \eqref{eq:eaRGD}. Then there exist constants $C_K,C_0>0$ such that for any step size \mbox{$0 < \tau_{\min} \leq \tau_k \leq \tau_{\max} \leq (1 + \frac{9}{2}C_KC_0^2)^{-1} < 1$} and any $k \geq 0$, the following relations hold:
\begin{itemize}
\item[{\rm(i)}] There exists a limit energy $\calE_\infty:=\lim\limits_{k \rightarrow \infty} \calE(\varphibf_k)$.

\item[{\rm(ii)}] There is a subsequence $\{ \varphibf_{k_l} \}_{l=0}^{\infty}$ of the sequence $\{ \varphibf_k \}_{k =0}^{\infty}$ and $\varphibf_* \in \OB$ such that $\lim\limits_{l\rightarrow \infty} \| \varphibf_{k_l} - \varphibf_*\|_{H}=0$. Furthermore, $\varphibf_*$ is a~constrained critical point of the energy functional $\,\calE$ with the Lagrange multiplier $\Lambda_*=\coout{ \calA_{\varphibf_*} \varphibf_* }{\varphibf_*}N^{-1}$.

\item[{\rm(iii)}] If an~accumulation point $\varphibf_*$ of $\{\varphibf_k\}_{k=0}^\infty$ is a locally quasi-unique ground state to \eqref{eq:min}, then the whole sequence of densities $\{|\varphibf_k|^2\}_{k=0}^\infty$ converges to the corresponding ground state density $|\varphibf_*|^2$ with
\begin{align*}
\lim_{k\rightarrow \infty} \| \, |\varphibf_k|^2 - |\varphibf_*|^2 \,\|_{\Lsp} = 0 
\qquad
\mbox{and}
\qquad
\lim_{k\rightarrow \infty} \| \, |\varphibf_k| - |\varphibf_*| \,\|_{\Hsp} = 0.
\end{align*}
\end{itemize}
\end{theorem}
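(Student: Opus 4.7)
The plan is to establish a monotone energy descent for the eaRGD iteration with admissible step sizes, and then to combine this with the subsequential compactness of $\OB$-bounded iterates and the local convergence result of Theorem~\ref{th:LocConv} to obtain the three statements. Writing the iterate as $\varphibf_{k+1}=\calN\big((1-\tau_k)\varphibf_k + \tau_k\gbf_k\big)$ with $\gbf_k = \calA_{\varphibf_k}^{-1}\varphibf_k\,\out{\varphibf_k}{\calA_{\varphibf_k}^{-1}\varphibf_k}^{-1}N$, the first and most delicate step is an energy descent estimate of the form
\begin{equation*}
\calE(\varphibf_{k+1}) \leq \calE(\varphibf_k) - c\,\tau_k\,\|\gbf_k - \varphibf_k\|_{a_{\varphibf_k}}^2,
\end{equation*}
with a positive constant $c$ bounded away from zero under the stated condition on $\tau_{\max}$. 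To prove this, I would combine the splitting \eqref{eq:energy2} with a Taylor expansion of $\calE$ around $\varphibf_k$ along the direction $\gbf_k-\varphibf_k$, exploit $a_{\varphibf_k}$-orthogonality of $\gbf_k-\varphibf_k$ to the vertical direction $\varphibf_k\Sigma_g(\varphibf_k)$, and absorb the cubic term generated by $K$ via Young's inequality and a uniform $[L^4(\calD,\C)]^p$-bound $C_0$ of the iterates that is inherited from $\calE(\varphibf_0)$ through $\|\varphibf_k\|_{\sR}^2\leq 2\calE(\varphibf_k)\leq 2\calE(\varphibf_0)$ and the Sobolev embedding $\Hsp\hookrightarrow[L^4(\calD,\C)]^p$. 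Removing the retraction requires the auxiliary estimate $\calE(\calN(\vbf))\leq \calE(\vbf)$ on a tube around $\OB$, which follows from coercivity of~$a_{\varphibf_k}$ and Assumption~\textbf{A3}. The explicit bound $\tau_{\max}\leq(1+\tfrac{9}{2}C_KC_0^2)^{-1}$ emerges from balancing the resulting quadratic and higher-order contributions, with $C_K$ encoding the coupling strength $\|K\|_2$.

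From the descent inequality, part~(i) is immediate because $\calE\geq 0$ by \eqref{eq:energy2} and Assumption~\textbf{A3}, so $\{\calE(\varphibf_k)\}_{k=0}^\infty$ is nonincreasing and bounded below, hence convergent to some $\calE_\infty$. Telescoping further gives $\sum_{k=0}^\infty \tau_k\|\gbf_k-\varphibf_k\|_{a_{\varphibf_k}}^2 < \infty$, and since $\tau_k\geq\tau_{\min}$, the energy-adaptive Riemannian gradient $\grad_a\calE(\varphibf_k) = \varphibf_k - \gbf_k$ tends to zero in the $a_{\varphibf_k}$-norm.

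For part~(ii), the uniform bound $\|\varphibf_k\|_{\sR}^2\leq 2\calE(\varphibf_0)$ and the norm equivalence $\|\cdot\|_{\sR}\sim\|\cdot\|_\Hsp$ deliver $\Hsp$-boundedness. Extracting a subsequence $\{\varphibf_{k_l}\}$ that converges weakly in $\Hsp$ to some $\varphibf_*\in\Hsp$ and strongly in $\Lsp$ and $[L^4(\calD,\C)]^p$ by Rellich--Kondrachov, the mass constraints pass to the weak $\Lsp$-limit, so $\varphibf_*\in\OB$. Passing to the limit in $\grad_a\calE(\varphibf_{k_l})\to 0$ using the continuity estimate \eqref{eq:boundedness}, Proposition~\ref{prop:inv_A_property}, and the test with $\calA_{\varphibf_{k_l}}^{-1}\wbf$ for $\wbf\in\Hsp$ yields $\calA_{\varphibf_*}\varphibf_* = \varphibf_*\Lambda_*$ with $\Lambda_* = \coout{\calA_{\varphibf_*}\varphibf_*}{\varphibf_*}N^{-1}$, so $\varphibf_*$ is a constrained critical point. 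Strong $\Hsp$-convergence of $\{\varphibf_{k_l}\}$ is recovered from convergence of the cubic part of $\calE$ under strong $L^4$-convergence, the convergence $\calE(\varphibf_{k_l})\to\calE_\infty = \calE(\varphibf_*)$, and the fact that weak convergence together with convergence of the $\sR$-norm implies strong convergence in the Hilbert space $(\Hsp,(\cdot,\cdot)_{\sR})$.

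For part~(iii), phase invariance of the density map and Proposition~\ref{prop:phase_lift} allow us to view the iteration on the quotient manifold $\,\Seg$. Because $\varphibf_*$ is locally quasi-unique, $[\varphibf_*]$ is a strict local minimizer of $\widetilde\calE$, and Theorem~\ref{th:LocConv} supplies a neighborhood $\mathcal{U}_\epsilon$ of $\varphibf_*$ in $\OB$ such that from any starting point in $\bigcup_{\Theta\in\DpS}\mathcal{U}_\epsilon\Theta$ the auxiliary iteration $\psibf_k=\Gamma_{\varphibf_*}(\varphibf_k)$ of Definition~\ref{def:auxi-scheme-definition} converges to $\varphibf_*$ in $\Hsp$. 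The subsequential convergence from (ii) ensures that some $\varphibf_{k_{l_0}}$ eventually lies in this set; restarting the analysis there forces $\inf_{\Theta}\|\varphibf_k\Theta-\varphibf_*\|_{\Hsp}\to 0$. Since $|\varphibf_k\Theta|^2 = |\varphibf_k|^2$ for all $\Theta\in\DpS$ and the maps $\vbf\mapsto|\vbf|^2$ from $\Hsp$ to $\Lsp$, and $\vbf\mapsto|\vbf|$ from $\Hsp$ to $\Hsp$ (via the chain rule and the Stampacchia-type truncation estimate), are continuous, the density convergence statements follow. The principal obstacle throughout is the first step: producing a descent constant that is both explicit in $C_K$ and $C_0$ and robust against the cubic perturbation introduced by the retraction~$\calN$, since these choices determine exactly the admissible step-size range asserted in the theorem.
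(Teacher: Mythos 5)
Your skeleton for parts (i) and (ii) — monotone energy descent built from the retraction estimate and the normalization inequality, then Rellich--Kondrachov compactness and passage to the limit in the vanishing Riemannian gradient — is essentially the paper's route (its Lemmas on the gradient/retraction bounds, the energy-decay theorem, and step (ii) of the appendix proof). The genuine gap is in part (iii). You invoke Theorem~\ref{th:LocConv} to ``capture'' the iteration once a subsequence enters a neighborhood of $\varphibf_*$, but that theorem requires hypotheses which Theorem~\ref{thm:global_convergence} does not grant: it assumes $K$ positive definite (Assumption~\textbf{A3} gives only componentwise non-negativity) and a \emph{constant} step size $\tau\in(0,2/\eta_{\sup})$, whereas here the $\tau_k$ may vary in $[\tau_{\min},\tau_{\max}]$ with $\tau_{\max}\leq(1+\tfrac{9}{2}C_KC_0^2)^{-1}$, a bound that need not lie inside the range $(0,\tfrac{2}{3})$ guaranteed for eaRGD when $C_KC_0^2$ is small. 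So the local-convergence machinery is simply not available under the stated assumptions. The paper's proof of (iii) avoids it entirely: local quasi-uniqueness makes $[\varphibf_*]$ a strict local minimizer of \eqref{eq:min_quotient}, so there is $\delta>0$ with $\dist([\varphibf_*],[\varthetabf_*])>\delta$ for every other ground state $[\varthetabf_*]$; since all accumulation points share the energy level $\calE_\infty$, re-running the compactness argument of (ii) shows that for $\epsilon<\delta$ only finitely many iterates satisfy $\tfrac{\epsilon}{2}\leq\dist([\varphibf_*],[\varphibf_k])\leq\epsilon$; Corollary~\ref{cor:Energyerror_lowerbound} gives $\dist([\varphibf_k],[\varphibf_{k+1}])\leq\|\varphibf_k-\varphibf_{k+1}\|_{\Hsp}\to0$; together these force $[\varphibf_k]\to[\varphibf_*]$, and the diamagnetic inequality bounds $\|\,|\varphibf_k|-|\varphibf_*|\,\|_{\Hsp}$ directly by $\dist([\varphibf_k],[\varphibf_*])$. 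You should replace your capture argument by this annulus/separation argument (or prove a variable-step local convergence result under \textbf{A1}--\textbf{A3} alone, which is not in the paper).

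There is also a circularity in your part (ii) as written: you recover strong $\Hsp$-convergence from ``$\calE(\varphibf_{k_l})\to\calE_\infty=\calE(\varphibf_*)$'', but the identity $\calE_\infty=\calE(\varphibf_*)$ is precisely what cannot be asserted before strong convergence is known --- weak lower semicontinuity yields only $\calE(\varphibf_*)\leq\calE_\infty$. The paper breaks this circle by first proving strong $\Hsp$-convergence of $\calA_{\varphibf_{k_l}}^{-1}\varphibf_{k_l}$ to $\calA_{\varphibf_*}^{-1}\varphibf_*$ (weak convergence via the continuity estimate \eqref{eq:boundedness}, plus convergence of the $a_{\varphibf_*}$-norms, both needing only the strong $\Lsp$- and $[L^4(\calD,\C)]^p$-convergence from Rellich--Kondrachov and uniform boundedness), which also gives $\Lambda_{k_l}\to\Lambda_*$; since $\grad_a\calE(\varphibf_{k_l})\to0$ strongly by Theorem~\ref{thm:EnergyDecay}(ii), the decomposition $\varphibf_{k_l}=\grad_a\calE(\varphibf_{k_l})+\calA_{\varphibf_{k_l}}^{-1}\varphibf_{k_l}\Lambda_{k_l}$ then delivers strong convergence of $\varphibf_{k_l}$, and only afterwards does $\calE_\infty=\calE(\varphibf_*)$ follow by continuity of $\,\calE$. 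With that reordering, your part (ii) is sound; part (iii) needs the different argument described above.
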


\begin{proof}
    The proof can be found in Appendix~\ref{app:proof_global}.
\end{proof}

This theorem provides useful insights into the behavior of the eaRGD method \eqref{eq:eaRGD}. Under appropriate step size selection, it ensures the convergence of a~subsequence of the iterates $\varphibf_k$ to a~constrained critical point starting from any initial guess. This makes the eaRGD scheme particularly suitable as a~reliable initialization strategy for other (local) optimization methods. Note that the convergence for the full sequence $\varphibf_k$ cannot be guaranteed due to multiplicative phase shifts. However, considering the associated densities $|\varphibf_k|^2$, the phase ambiguity is removed, and the entire sequence $|\varphibf_k|^2$ is proven to converge, provided that the constrained critical point is a~locally quasi-unique ground state. 

%
%
\subsection{Lagrangian-based gradient descent method}\label{ssec:LagrRGD}

In~\cite{AHPS25}, the Lagrangian-based RGD method has been introduced for non-rotating multicomponent BECs following ideas from \cite{GaoPY25,MisS16}, which relies on the idea of block diagonal preconditioning and exploits second-order information on the energy functional and constraints. This method usually provides a~better convergence rate and computational performance than eaRGD once it is initialized sufficiently close to a~ground state.

Consider a~\emph{regularized Lagrangian} 
\[
\calL_\omega(\varphibf,\Lambda)=\calE(\varphibf)-\frac{\omega}{2}\trace\big(\Lambda(\out{\varphibf}{\varphibf}-N)\big)
\]
with a~regulari\-zation parameter $\omega\in(0,1)$. We define a~\emph{Lagrangian-based metric} 
\[
g_{\varphibf,\omega}(\ybf,\zbf) 
= \langle\calG_{\varphibf,\omega}\ybf,\zbf\rangle
= \sum_{j=1}^p \langle\calG_{\varphibf,\omega,j}y_j,z_j\rangle,
\qquad \ybf,\zbf\in T_\varphibf\OB,
\]
where the component operators $\calG_{\varphibf,\omega,j}:H_0^1(\DC)\to H^{-1}(\DC)$ are given by 
\begin{align*}
\langle\calG_{\varphibf,\omega,j}v_j,w_j\rangle
& = \big\langle\Drm^2_{\varphi_j\varphi_j}\calL_{\omega}(\varphibf,\Lambda)v_j,w_j\big\rangle \\
& = \big\langle\calA_{\varphibf,j}v_j+\calB_{\varphibf,jj}(v_j,\varphi_j)-\omega \lambda_j v_j,w_j\big\rangle, \qquad v_j,w_j\in H_0^1(\calD,\C),
\end{align*}
with $\lambda_j=\langle\calA_{\varphibf,j}\varphi_j,\varphi_j\rangle/N_j$ for $j= 1,\ldots,p$. Introducing an~operator $\calB_{\varphibf}^{\drm}:\Hsp\times\Hsp\to\Hsp^\star$ defined as 
\[
\langle\calB_{\varphibf}^{\drm}(\vbf,\ubf), \wbf\rangle 
= 2 \int_{\calD} \re(\varphibf\circ\overline\vbf)\Diag(K)\re(\ubf\circ\overline\wbf)\dx
= \sum_{j=1}^p \langle\calB_{\varphibf,jj}(u_j,v_j), w_j\rangle, 
 \quad \ubf,\vbf,\wbf\in\Hsp,
\]
where $\Diag(K)$ denotes a diagonal matrix with the same diagonal elements as $K$, we can write $\langle\calG_{\varphibf,\omega}\vbf,\wbf\rangle=\langle\calA_{\varphibf}\vbf+\calB_{\varphibf}^{\drm}(\vbf,\varphibf)-\omega\,\vbf\Lambda,\wbf\rangle$. It follows from Propositions~\ref{prop:aphi} and~\ref{prop:D2Eproperties} that the operator $\calG_{\varphibf,\omega}$ is symmetric, bounded, and phase invariant. Moreover, for sufficiently small~$\omega$, 
it is coercive and therefore invertible. Therefore, the metric $g_{\varphibf,\omega}$ fulfills Assumptions~{\bf B1} and~{\bf B2}. Assumption~{\bf B3} immediately follows from Propositions~\ref{prop:hor_compat_eigvalue} and~\ref{prop:simple_ev}.

The corresponding \textit{Lagrangian-based Riemannian gradient descent} (LagrRGD) is defined by 
\begin{equation}\label{eq:lagrRGD}
    \varphibf_{k+1} = \calN\big(\varphibf_k - \tau_k\,\calG_{\varphibf_k,\omega}^{-1}(\calA_{\varphibf_k}\varphibf_k - \varphibf_k\out{\varphibf_k}{\calG_{\varphibf_k,\omega}^{-1}\calA_{\varphibf_k}\varphibf_k}\out{\varphibf_k}{\calG_{\varphibf_k,\omega}^{-1}\varphibf_k}^{-1})\big).
\end{equation}
To assess its local convergence to a~locally quasi-unique ground state $\varphibf_*$, we apply Theorem~\ref{th:LocConv} and Proposition~\ref{prop:point_spectrum}. The additional compactness assumption follows directly from the relation \mbox{$\calG_{\varphibf_*,\omega}^{-1}\calA_{\varphibf_*} -\calI = \calG_{\varphibf_*,\omega}^{-1}(\calA_{\varphibf_*}-\calG_{\varphibf_*,\omega})$}, where the operator on the right-hand side can be shown to be compact similarly to the proof of Proposition~\ref{prop:point_spectrum}. Therefore, we can consider the following eigenvalue problem: find an~eigenfunction $\vbf_{\omega,i} \in \Horg{\varphibf_*}{\omega}$ and an~eigenvalue $\eta_{\omega,i} \in \R$ such that
\begin{equation}\label{eq:weighted_EVP_lagrange_new}
 \big\langle \Drm^2_{\!\varphibf\varphibf}\calL(\varphibf_*,\Lambda_*)\vbf_{\omega,i},\xibf\big\rangle 
 = \eta_{\omega,i}\langle\calG_{\varphibf_*,\omega}\vbf_{\omega,i},\xibf\rangle
\qquad \text{for all }\xibf \in \Horg{\varphibf_*}{\omega}, 
\end{equation}
where $\Horg{\varphibf_*}{\omega}$ denotes the horizontal space at $\varphibf_*$ with respect to the metric $g_{\varphibf_*,\omega}$.  For the LagrRGD method~\eqref{eq:lagrRGD}, the convergence rate 
\begin{equation}\label{eq:LagrRGDrho}
    \varrho_{\tau,\omega,*} = \sup_{i\in\mathbb{N}}\big|1-\tau\eta_{\omega,i}\big|
\end{equation}
obeys the inequality $\varrho_{\tau,\omega,*} < 1$ only for step sizes $0< \tau < \frac{2}{\eta_{\omega,\sup}}$, where $\eta_{\omega,\sup}$ denotes the largest eigenvalue of \eqref{eq:weighted_EVP_lagrange_new}. 

\begin{remark}[Single-component case] \label{rem:conv_rate}
In the single-component case $p=1$, for a locally quasi-unique ground state $\varphi_*$ and the corresponding Lagrange multiplier $\lambda_*$, we obtain from~\eqref{eq:weighted_EVP_lagrange_new} that
\begin{align}
1-\eta_{\omega,i}
    & = \frac{(1-\omega)\lambda_*\|v_{\omega,i}\|_{L^2}^2}{\langle\Drm^2\calE(\varphi_*)v_{\omega,i}-\omega\lambda_*v_{\omega,i},v_{\omega,i}\rangle}>0 \label{eq:prop_eta}
\end{align}
provided $\omega\in(0,1)$. This implies that $\eta_{\omega,i}<1$ and, in particular, that $\tau=1$ is an~admissible step size. Then, taking into account that $v_{\omega,i}\in\calH_{\varphi_*}^{\omega}$ and that $\calH_{\varphi_*}^{\omega}$ coincides with the \mbox{$L^2$-orth}ogonal complement of the vertical space $\calV_{\varphi_*}$ in $T_{\varphi_*}\Sphere_{N_1}$, we derive from \eqref{eq:prop_eta} by using Proposition~\ref{prop:simple_ev} that 
\[
\varrho_{\tau=1,\omega,*} 
    = \sup_{i\in\mathbb{N}}\big|1-\eta_{\omega,i}\big|=\frac{1-\omega}{\lambda_2/\lambda_1-\omega} 
    < \frac{\lambda_1}{\lambda_2}<1,
\]
where $\lambda_1=\lambda_*$ is the smallest and $\lambda_2$ is the second smallest eigenvalue of $\Drm^2\calE(\varphi_*)\big|_{T_{\varphi_*}\Sphere_{N_1}}$. 
Considering the convergence rate $\varrho_{\tau=1,\omega,*}$ as a function of the regularization parameter $\omega$, we observe that this function decreases as $\omega$ increases. This suggests choosing $\omega$ close to $1$ to ensure fast convergence of the LagrRGD method~\eqref{eq:lagrRGD}. In this case, however, the resulting operator $\calG_{\varphi_k,\omega}$ may become ill-conditioned or even lose coercivity. Similar behavior is expected for $p > 1$. This computational issue will be further examined in numerical experiments.
\end{remark}
%

\section{Numerical experiments}\label{sec:numerics}
In this section, we examine three different BEC models on a~2D spatial domain with two and three components to showcase the theoretical results and further investigate the proposed optimization algorithms. These models were discretized using a finite element scheme with bi-quadratic elements, though other approaches, such as spectral elements or finite differences, could also be employed. The implementation was done in the Julia programming language using the package {\tt Ferrite.jl} for the finite element discretization. The source code for these experiments is available in the GitHub repository 
\begin{center}
{\tt\url{https://www.github.com/MaHermann/Riemannian-coupledGPE-rotation}}.    
\end{center}

All three models are considered on the spatial domain $\calD = [-10, 10]^2$, which has been uniformly partitioned with mesh width \mbox{$h = \tfrac{20}{64} = 0.3125$}, resulting in $64 = 2^6$ elements per direction and $n=16641$ degrees of freedom.
The iterations of the optimization schemes are terminated once the (discrete) residual
\[
r_k = \Big(\sum_{j=1}^p \overline{R(\phi_{k,j})}^T M R(\phi_{k,j})\Big)^{1/2}
\]
falls below $10^{-14}$ or after a maximal number of iterations. Here, 
\mbox{$R(\phi_{k,j})= (A_{\Phi_k,j}- \lambda_{k,j} M)\phi_{k,j}$} are the component residuals, $\phi_{k,j}$ is the $j$-th column of the discrete wave function $\Phi_k\in\C^{n\times p}$, $M$ is the $L^2$-mass matrix, $A_{\Phi_k,j}$ is the stiffness matrix for the $j$-th component, and $\lambda_{k,j}=\overline{\phi}_{k,j}^TA_{\Phi_k,j}\phi_{k,j}/N_j$.
To allow a fair comparison of both schemes, eaRGD and LagrRGD, they were initialized with the same initial guess computed by the eaRGD method until the residual satisfies $r_k<10^{-2}$ starting with a~normalized non-zero constant function having zero values on the boundary. 

For solving linear systems, we use the preconditioned conjugate gradient (CG) method with the preconditioner determined from the incomplete LU decomposition of $A_{0,j}$. In inner iterations, the stopping criterion for the relative residuals is based on an~adaptive tolerance $r_k\cdot\text{tol}_{\text{CG}}$ with $\text{tol}_{\text{CG}}=10^{-8}$ for the two-component models and $\text{tol}_{\text{CG}}=10^{-1}$ for the three-component model.

In Riemannian optimization, choosing an~appropriate step size $\tau_k$ is critical for ensuring convergence and efficiency of the algorithm. In our experiments, we tested different step size strategies. 

\begin{itemize}[leftmargin=15pt]
\item \textit{Fixed step size}: In Section~\ref{ssec:LocConv}, we have discussed the local convergence of RGD with a~constant choice $\tau_k = \tau$. We take a~particularly simple step size $\tau = 1$ as a~baseline, which turns out to be admissible for all test models, ensuring convergence of eaRGD and LagrRGD without the need for further tuning. 

\item \textit{Exact line search}:
Alternately, we can determine the locally optimal step size 
\[
\tau_k^\text{LS} = \argmin_{\tau \in [\tau_\text{min}, \tau_\text{max}]} \calE\big(\calR_{\varphibf_k}(- \tau \grad_g\calE(\varphibf_k))\big)
\]
with appropriately chosen lower and upper bounds $0<\tau_\text{min} <\tau_\text{max}$. In our experiments, we take $\tau_{\min} = 0.1$ and $\tau_{\max} = 10$. 
Similarly to \cite[App.~B]{HenY25}, we can compute the optimal value of~$\tau_k^\text{LS}$ at step $k$ comparatively cheaply by explicitly expanding the weighted mass matrices.

\item \textit{Adaptive step size}: A simple but effective step size strategy has been introduced in \cite{MalM20} and extended to RGD in~\cite{AnsM25}. We find that even a simplified version of the Euclidean strategy accelerates convergence significantly and comes at almost no computational cost. The corresponding step size given by
\[
    \tau_k^\text{AD} = \frac{\| \varphibf_k - \varphibf_{k-1} \|_{\Lsp}}{\|\grad_g\calE(\varphibf_k) - \grad_g\calE(\varphibf_{k-1}) \|_{\Lsp}}
\]
can be computed at low cost, since it circumvents the repeated evaluations of the energy functional that are typically needed in line search algorithms.
\end{itemize}

%
%
\subsection{Two-component BEC models}
In the first set of numerical experiments, we study two models with two 
interacting components: a weakly interacting model (Model~1) and a strongly interacting model (Model~2) with interaction matrices 
\[
K_{\rm w} = \begin{bmatrix}120 & 20 \\ 20 & 100 \end{bmatrix} \qquad\text{ and } \qquad
K_{\rm s} = \begin{bmatrix}120 & 60 \\ 60 & 100 \end{bmatrix},
\]
respectively. Other physical parameters are the same for both models. These are the number of particles $N_1=2.0$ and $N_2=1.0$, the angular frequencies $\Omega_1 = -1.0$ and $\Omega_2 = -1.2$, and the harmonic potentials 
\[
    V_1(x, y) = \tfrac{3}{4}\big((0.8x)^2 + (1.2y)^2 \big), \qquad  
    V_2(x, y) = \tfrac{1}{2}\big((1.2x)^2 + (0.9y)^2 \big).
\]

{\em Model~1}: The component densities of the local minimizer $\varphibf_*$ computed by the eaRGD method are presented in Figure~\ref{fig:groundstates_ex1}. The corresponding energy is $\calE(\varphibf_*) = 8.4864708$, and the Lagrange multipliers are given by $\lambda_{*,1} = 9.6417158$ and $\lambda_{*,2} = 4.5589610$. Table~\ref{tab:Esecphi_spectrum_exp1} with selected eigenvalues (ordered increasingly) of the component operators $\calA_{\varphibf_*,j}$ and the projected component Hessians $\calF_{\varphibf_*,j}=\Drm^2_{\varphi_j\varphi_j}\calE(\varphibf_*)\big|_{T_{\varphi_{*,j}}\Sphere_{N_j}}$ demonstrates that each Lagrange multiplier coincides with the smallest eigenvalue of the corresponding projected Hessians but not of~$\calA_{\varphibf_*,j}$, although it is present in the spectrum of~$\calA_{\varphibf_*,j}$. This confirms the results stated in Proposition~\ref{prop:simple_ev}. We also verified that all the eigenvalues of $\Drm_{\!\varphibf\varphibf}^2\calL(\varphibf_*,\Lambda_*)$ restricted to $\calH_{\varphibf_*}^{\Lsp}$ are positive, which shows that the computed constrained critical point is indeed locally quasi-unique.

\begingroup
\setlength{\belowcaptionskip}{0pt}
\begin{figure}[t]
	\centering
\includegraphics{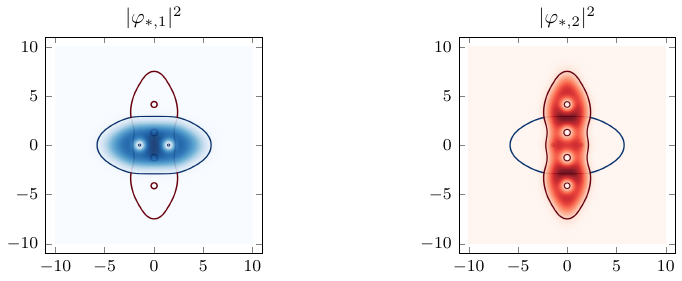}
\caption{Model 1: component densities of the computed local minimizer.}
	\label{fig:groundstates_ex1}
\end{figure}   
\endgroup

\begingroup
\setlength{\belowcaptionskip}{0pt}
\begin{table}[t]
    \centering
    \begin{tabular}{|c|c|c|c|c|c|}
        \hline
        $i$ & 1 & 7 & \cellcolor{lgray}8 & 9 & \cellcolor{lgray}10 \\ \hline
        $\lambda_{i}(\mathcal{A}_{\varphibf_*,1})$ & 7.9246460 & 9.6103155 & \cellcolor{lgray} 9.6417158 & 9.7017540 & 9.8916715 \\ \hline
        $\lambda_{i}(\mathcal{A}_{\varphibf_*,2})$ & 4.0456313 & 4.52922557 & 4.5374526 & 4.5520104 & \cellcolor{lgray} 4.5589610  \\ \hline
    \multicolumn{6}{c}{} \\[-2mm]
         \hline
        $i$ & \cellcolor{lgray}1 & 2 & 3 & 4 & 5 \\ \hline
        $\lambda_i(\calF_{\varphibf_*,1})$ & \cellcolor{lgray}9.6417158 &  9.6719120 & 9.6762225 & 9.7126806 & 9.8638330 \\ \hline
		$\lambda_i(\calF_{\varphibf_*,2})$ & \cellcolor{lgray}4.5589610 & 4.5648379 & 4.5821349 & 4.5857457 & 4.5960499 \\ \hline
    \end{tabular}
    \caption{Model~1: (top) selected eigenvalues of the component operators $\mathcal{A}_{\varphibf_*,j}$ for $j=1,2$;  (bottom) five smallest eigenvalues of $\calF_{\varphibf_*,j}=\Drm^2_{\varphi_j\varphi_j}\calE(\varphibf_*)\big|_{T_{\varphi_{*,j}}\Sphere_{N_j}}$ for $j=1,2$.}
    \label{tab:Esecphi_spectrum_exp1}
\end{table}
\endgroup

\begingroup
\setlength{\abovecaptionskip}{0pt}
\setlength{\belowcaptionskip}{0pt}
\begin{figure}[h!t]
\centering
\includegraphics{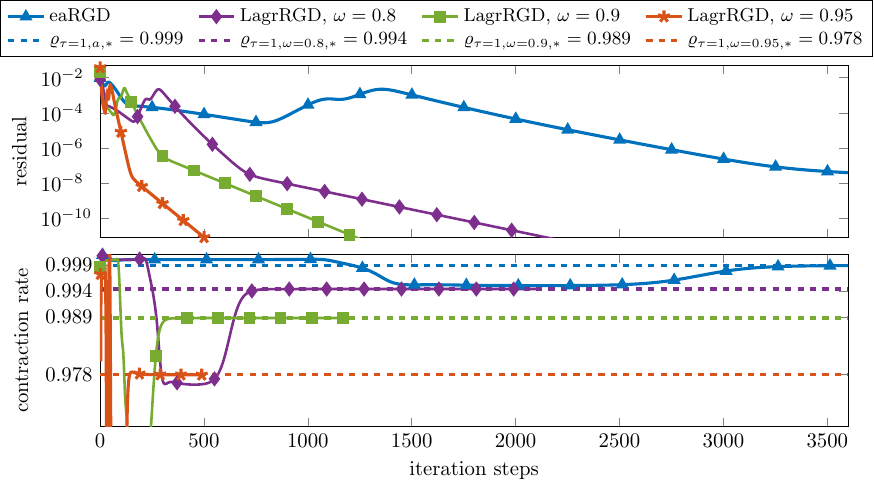}
\caption{Model 1: (top) convergence histories of the residuals; (bottom) contraction and convergence rates for different methods with $\tau=1$.}
\label{fig:mod1_res_rate}
\end{figure}
\endgroup

\begingroup
\setlength{\belowcaptionskip}{0pt}
\begin{figure}[h!t]
\centering
\includegraphics{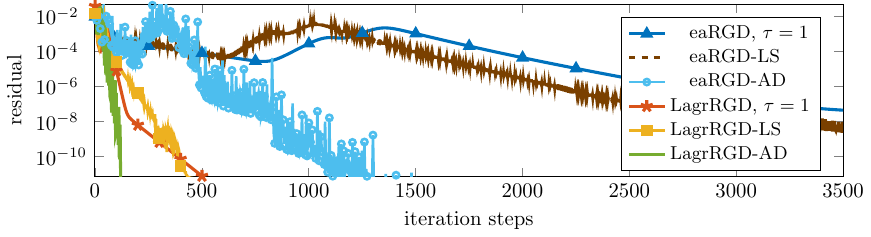}
\caption{Model 1: convergence histories of the residuals for eaRGD and \mbox{LagrRGD} with \mbox{$\omega=0.95$} for different step size strategies.}
\label{fig:mod1_stepsize}
\end{figure}
\endgroup

The convergence histories of the residuals for eaRGD and LagrRGD for different values of $\omega$ with a~fixed step size $\tau=1$ are shown in Figure~\ref{fig:mod1_res_rate}~(top). One can see that LagrRGD converges faster than eaRGD. Moreover, as $\omega$ increases, the number of the LagrRGD iterations required to achieve the prescribed tolerance decreases. 
This behavior is expected, since the convergence rate $\varrho_{\tau=1,\omega,*}$
decreases with increasing $\omega$. In Figure~\ref{fig:mod1_res_rate}~(bottom), we compare the eaRGD and LagrRGD convergence rates $\varrho_{\tau=1,a,*}$ and $\varrho_{\tau=1,\omega,*}$ given in \eqref{eq:specrad} and \eqref{eq:LagrRGDrho}, respectively, with the contraction rate defined as $\|\varphibf_{k+1}-\varphibf_*\|_{\Hsp}/\|\varphibf_{k}-\varphibf_*\|_{\Hsp}$.
We observe that the contraction rate asymptotically approaches the corresponding predicted convergence rate. This numerically validates the local convergence results in Sections~\ref{ssec:eaRGD} and~\ref{ssec:LagrRGD}.  

In Figure~\ref{fig:mod1_stepsize}, we present the convergence histories of the residuals for earRGD and LagrRGD with a fixed step size $\tau=1$, a step size determined by an exact line search, and an adaptive step size strategy. While employing an exact line search reduces the total number of iterations compared to the fixed step size, the adaptive approach yields the best overall performance, achieving faster convergence across all examples.

{\em Model~2}: 
Comparing the component densities of the local minimizer for Model~2 presented in Figure~\ref{fig:groundstates_ex2} with those for Model~1 in Figure~\ref{fig:groundstates_ex1}, we observe that for both models, the densities have smooth profiles with vortex cores visible as dips. The bulk region of each condensate component is primarily governed by its respective trapping potential, and their spatial overlap is determined by the inter-component interaction regime. The vortex number and the resulting vortex lattice patterns depend on the masses, angular velocities, and interaction strengths. As the inter-component interaction strength $\kappa_{12}$ increases in Model~2, the system undergoes a transition towards phase separation, resulting in the formation of distinct domains in the second component. 

\begingroup
\setlength{\abovecaptionskip}{4pt}
\setlength{\belowcaptionskip}{0pt}
\begin{figure}[ht]
	\centering
    \includegraphics{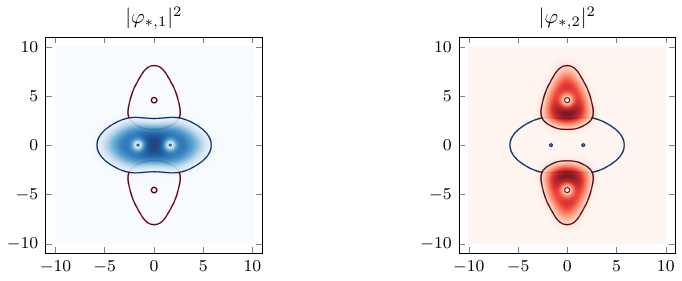}
	\caption{Model 2: component densities of the computed local minimizer.}
	\label{fig:groundstates_ex2}
\end{figure}
\endgroup

\begingroup
\setlength{\abovecaptionskip}{0pt}
\setlength{\belowcaptionskip}{0pt}
\begin{figure}[h!t]
\centering
\includegraphics{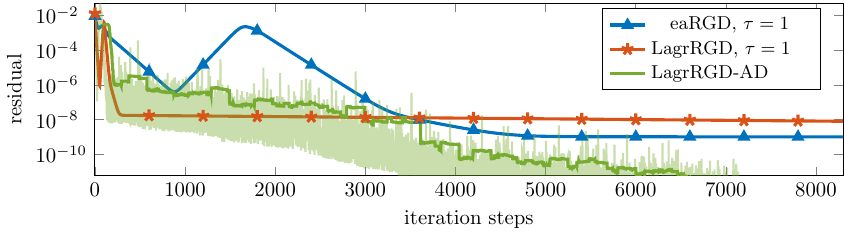}
\caption{Model 2: 
convergence histories of the residuals for eaRGD and \mbox{LagrRGD} with \mbox{$\omega=0.95$} for different step size strategies. For LagrRGD-AD, we also include a moving mean with a~window of 100 steps in each direction.
}
\label{fig:stepsize_ex2}
\end{figure}
\endgroup

The energy value is $\calE(\varphibf_*) = 8.6821968$ and the Lagrange multipliers are $\lambda_{*,1} = 9.6544660$ and \mbox{$\lambda_{*,2}=5.0782488$}. They coincide with the smallest eigenvalues of the projected Hessians~$\calF_{\varphibf_*,j}$ and are present in the spectrum of the component operators $\calA_{\varphibf_*,j}$, as shown in Table~\ref{tab:Esecphi_spectrum_exp2}. It should be noted that the relative spectral gap of $\calF_{\varphibf_*,2}$, defined as $\Delta_2 = (\lambda_2(\calF_{\varphibf_*,2}) - \lambda_1(\calF_{\varphibf_*,2}))/\lambda_1(\calF_{\varphibf_*,2})$, is very small.
As a~result, the residuals stagnate at a~level of about $10^{-8}$ for both eaRGD and LagrRGD when the fixed step size of $\tau=1$ is used; see Figure~\ref{fig:stepsize_ex2}. However, combining LagrRGD with the adaptive step size strategy effectively overcame this stagnation, yielding smaller residual values and improved convergence. The same behavior was also observed for eaRGD.

\begingroup
\setlength{\belowcaptionskip}{0pt}
\begin{table}[h!t]
    \centering
    \begin{tabular}{|c|c|c|c|c|c|}
        \hline
        $i$ & 1 & \cellcolor{lgray} 7 & 8 & 9 & \cellcolor{lgray} 10 \\ \hline
        $\lambda_{i}(\mathcal{A}_{\varphibf_*,1})$ & 7.8667998 & \cellcolor{lgray} 9.6544660 & 9.7021386 & 9.8445635 &  10.0504973 \\ \hline
        $\lambda_{i}(\mathcal{A}_{\varphibf_*,2})$ & 4.5250052 & 5.0511228 & 5.0517242 & 5.0782014 & \cellcolor{lgray}  \hphantom{1}5.0782488 \\ \hline
    \multicolumn{6}{c}{} \\[-2mm]
            \hline
        $i$ & \cellcolor{lgray} 1 & 2 & 3 & 4 & 5 \\ \hline
        $\lambda_i(\calF_{\varphibf_*,1})$ & \cellcolor{lgray} 9.6544660 & 9.6779311 & 9.6840046 & 9.7261413 & 9.8075549 \\ \hline
		  $\lambda_i(\calF_{\varphibf_*,2})$ & \cellcolor{lgray} 5.0782488 & 5.0782810 & 5.1160543 & 5.1162083 & 5.1547135 \\\hline
    \end{tabular}
    \caption{Model~2: (top) selected eigenvalues of the component operators $\mathcal{A}_{\varphibf_*,j}$ for $j=1,2$; (bottom) five smallest eigenvalues of $\calF_{\varphibf_*,j}=\Drm^2_{\varphi_j\varphi_j}\calE(\varphibf_*)\big|_{T_{\varphi_{*,j}}\Sphere_{N_j}}$ for $j=1,2$.}
    \label{tab:Esecphi_spectrum_exp2}
\end{table}
\endgroup
%
%
\subsection{Three-component model}

Next, we consider the three-component model (Model 3) with the following physical parameters: 
\[
\begin{array}{lll}
N_1=2.0, & \quad\Omega_1=-1.0, & \quad V_1(x, y) = \tfrac{1}{2}\big((0.9x)^2 + (1.1y)^2 \big), \\[2mm]
N_2=1.0, & \quad\Omega_2=-1.1, & \quad V_2(x, y) = \tfrac{1}{2}\big((1.1x)^2 + (0.9y)^2 \big), \\[2mm]
N_3=3.0, & \quad\Omega_3=-1.2, & \quad V_3(x, y) = \tfrac{1}{2}\big(x^2 + y^2 \big) + \sin{x}^2 + \tfrac{1}{2}\sin{y}^2,
\end{array} \hspace{-5mm}\quad K = \begin{bmatrix}100 & 40 & 50 \\ 40 & 125 & 60 \\ 50 & 60 & 150 \end{bmatrix}.
\]
The component densities of the local minimizer $\varphibf_*$ computed by the eaRGD method are presented in Figure~\ref{fig:groundstates_ex3}. We observe that the first and second components partially overlap, while the third component exhibits a~nearly shell-like configuration, almost entirely surrounding the other two components. 
As for two-component models, the smallest eigenvalues of $\calF_{\varphibf_*,j}$ coincide with the Lagrange multipliers corresponding to $\varphibf_*$. Figure~\ref{fig:mod3_convergence} presents the convergence histories for the residuals for different optimization schemes with the fixed step size of $\tau=1$. Compared to the two-component models, the number of iterations required for convergence increases significantly, possibly due to the more complex interplay of components, which results in a~more challenging optimization problem. By employing an~adaptive step size strategy, the number of iterations can be further reduced, thereby improving the performance of the optimization schemes. For example, for LagrRGD with $\omega=0.95$, a residual of $10^{-11}$ is achieved in approximately $800$~iterations.

\begin{figure}[t]
	\centering
    \includegraphics{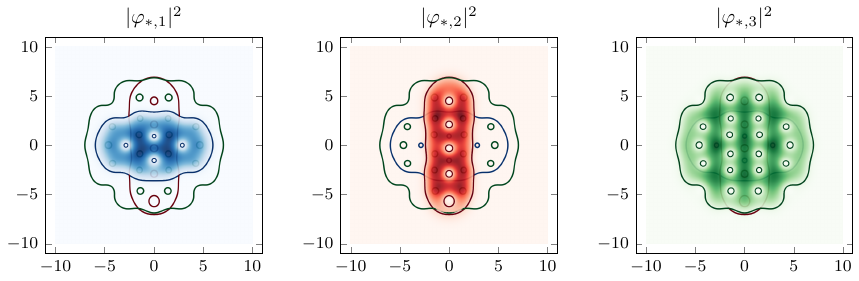}
	\caption{Model~3: component densities of the computed ground state.}
	\label{fig:groundstates_ex3}
\end{figure}

\begingroup
\setlength{\abovecaptionskip}{0pt}
\setlength{\belowcaptionskip}{0pt}
\begin{figure}[h!t]
\centering
\includegraphics{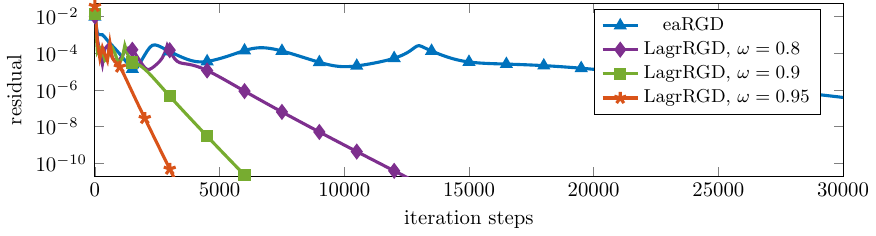}
\caption{Model 3: convergence histories of the residuals for different methods with $\tau=1$.}
\label{fig:mod3_convergence}
\end{figure}
\endgroup

%
%
\subsection{\texorpdfstring{Choosing $\omega$ for the Lagrangian-based Riemannian gradient descent}{Choosing omega for the Lagrangian-based Riemannian gradient descent}}

The re\-gu\-larization parameter $\omega$ in the LagrRGD method affects the overall runtime in two opposing ways. On the one hand, larger values of $\omega$ lead to a lower outer iteration count, as shown theoretically for a~single-component system in Remark~\ref{rem:conv_rate} and confirmed for multicomponent systems by the numerical experiments above. On the other hand, $\omega$ also impacts the condition number of the stiffness matrix of the operator $\calG_{\varphibf_k,\omega}$, which in turn affects the computational cost per iteration. To better understand this trade-off and to identify guidelines for choosing $\omega$ effectively in practice, we examine the convergence behavior and conditioning of linear systems for different values of $\omega$.

For an Hermitian positive definite matrix $A$ and the $L^2$-mass matrix $M$, the condition number $\kappa_M(A)$ of the matrix pair $(A,M)$ is defined as the quotient of its largest and smallest eigenvalues. Then for a~ground state $\Phi_*$ and the stiffness matrix of the component Hessian $\Drm^2_{\varphi_j\varphi_j}\calE(\varphibf_*)$ given by  $H_{\Phi_*,j}=A_{\Phi_*,j} + B_{\Phi_*,jj}$, the diagonal blocks of the matrix version of the Lagrangian-based weighting operator $\calG_{\varphibf_*,\omega}$ can be written as $G_{\Phi_*,\omega,j} = H_{\Phi_*,j} - \omega\lambda_{*,j} M$ with the Lagrange multiplier \mbox{$\lambda_{*,j}=\overline{\phi}_{*,j}^TA_{\Phi_*,j}\phi_{*,j}^{}/(\overline{\phi}_{*,j}^TM\phi_{*,j}^{})$} for $j=1,\ldots,p$. Then the eigenvalues of the matrix pairs $(G_{\Phi_*,\omega,j},M)$ and $(H_{\Phi_*,j},M)$, which are assumed to be ordered increasingly, are related by 
$\lambda_{i,j}(G_{\Phi_*,\omega,j},M) = \lambda_{i,j}(H_{\Phi_*,j},M) - \omega\lambda_{*,j}$.
We also know that $\lambda_{*,j}$ is an eigenvalue of $(H_{\Phi_*,j},M)$, and hence $\lambda_{*,j}\geq\lambda_{1,j}(H_{\Phi_*,j},M)$. Therefore, we have \vspace*{-2mm}
\begin{align}
\kappa_M(G_{\Phi_*,\omega,j}) 
    & = \frac{\lambda_{n,j}(G_{\Phi_*,\omega,j},M)}{\lambda_{1,j}(G_{\Phi_*,\omega,j},M)} 
    = \frac{\lambda_{n,j}(H_{\Phi_*,j},M) - \omega\,\lambda_{*,j}}{\lambda_{1,j}(H_{\Phi_*,j},M) - \omega\,\lambda_{*,j}} \notag\\
   &  \geq  \frac{\lambda_{n,j}(H_{\Phi_*,j},M) - \omega\,\lambda_{1,j}(H_{\Phi_*,j},M)}{\lambda_{1,j}(H_{\Phi_*,j},M) - \omega\,\lambda_{1,j}(H_{\Phi_*,j},M)} 
    = \frac{1}{1 - \omega}\big(\kappa_M(H_{\Phi_*,j}) - \omega\big). \vspace*{-2mm}
    \label{eq:kappaMG}
\end{align} 
This shows that for $\kappa_M(H_{\Phi_*,j}) \gg  \omega$, the condition number of $G_{\Phi_*,\omega,j}$ grows like $1/(1 - \omega)$ as~$\omega$ approaches~$1$. As the involved quantities depend continuously on $\Phi$, this behavior remains valid in a neighborhood of the ground state $\Phi_*$, which is also supported numerically in Figure~\ref{fig:condition}.

\begingroup
\setlength{\belowcaptionskip}{0pt}
\begin{figure}[t]
\centering
\includegraphics{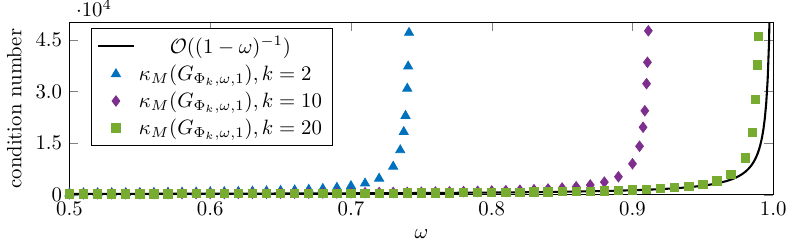}    
\caption{Condition numbers of $G_{\Phi_k,\omega,1}$ of Model 2 for different values of $\omega$. Each $\Phi_k$ has been obtained by $k$ steps of eaRGD from a constant initial value. Since the same method is used to initialize LagrRGD, this illustrates why for increasing values of $\omega$, more  iteration steps are needed in the initialization phase. Note that the condition numbers are shown only for positive definite~$G_{\Phi_k,\omega,1}$.}
\label{fig:condition}
\end{figure}
\endgroup

In each step of LagrRGD, we need to solve two linear systems involving $G_{\Phi_k,\omega,j}$ for \mbox{$j=1,\ldots,p$}, cf.~\eqref{eq:lagrRGD}. When employing preconditioned CG, a~higher condition number results in a larger number of inner CG iterations per outer iteration of LagrRGD. To simplify the analysis, we assume that this number of inner iterations scales as $\calO(1/\sqrt{1-\omega})$, which is true for CG without preconditioning. Note that in practice, as described above, a preconditioner is typically used, which significantly reduces the iteration count and potentially affects the precise dependence on $\omega$. In the single-component case, we can therefore estimate the total cost of the LagrRGD algorithm~as 
\[
    \calO(\text{\#outer iterations}\cdot\text{\#inner iterations}) 
    = \calO\bigg(\Big(\log\Big(1+\frac{\Delta}{1 - \omega}\Big) \sqrt{1 - \omega}\Big)^{-1}\bigg),
\]
where we used Theorem~\ref{th:LocConv} (ignoring the dependence on $\epsilon$), Remark~\ref{rem:conv_rate} and estimate~\eqref{eq:kappaMG}. Here, $\Delta = (\lambda_2 - \lambda_1) / \lambda_1$ with the eigenvalues $\lambda_1$ and~$\lambda_2$ as in Remark~\ref{rem:conv_rate}.

Assuming that a similar relation holds for the multicomponent case, we gain the following insights. For relative spectral gaps $\Delta_j$ exceeding $4$, the optimal value of $\omega$ is~$0$. In practice, however, these gaps are typically much smaller; for example, in the models above, they range from $6.3\cdot 10^{-6}$ to $3.1\cdot 10^{-2}$.
In this case, the acceleration in convergence provided by the Lagrangian-based metric outweighs the additional computational cost of the ill-conditioned linear systems, resulting in an~optimal value of $\omega$ close to 1. In fact, the main limiting factor in these cases is the missing positive definiteness of $G_{\Phi_k,\omega,j}$ for iterates~$\Phi_k$ that are not sufficiently close to the ground state~$\Phi_*$. Consequently, a~promising approach might be an~adaptive algorithm that chooses a~different $\omega_k$ at each iteration as large as possible while still ensuring that the corresponding linear systems are solvable, similar to the one proposed in \cite{MisS16}.


\section{Conclusion} \label{sec:conclusion}

In this paper, we have presented a~unified geometric and algorithmic framework for the computation of ground states of rotating multicomponent Bose–Einstein condensates. By formulating the problem on an appropriate quotient manifold and employing Riemannian optimization techniques, we have addressed the fundamental difficulties caused by phase invariance and nonlinear coupling inherent to the Gross–Pitaevskii system. Within this setting, we have presented a~class of Riemannain gradient descent methods and analyzed their local convergence properties by deriving convergence rates under general metric assumptions.
These results then serve as the setting to investigate two specific numerical schemes, the energy-adaptive and Lagrangian-based Riemannian gradient descent methods, for which the choice of metric is motivated by the structure of the minimization problem.
Our theoretical analysis clarifies the role of the chosen metrics in shaping the convergence behavior, while numerical experiments support the theoretical findings by confirming the predicted rates and demonstrating the practical effectiveness of the proposed optimization algorithms. Enhanced with an~adaptive step size strategy, they exhibit faster convergence and improved stability, effectively mitigating stagnation effects caused by small spectral gaps. Beyond the present setting, the developed framework paves the way for extending Riemannian optimization methods to more complex quantum systems and other classes of constrained energy minimization problems.

\appendix
\section{Proof of global convergence}\label{app:proof_global}

In order to establish global convergence of the eaRGD method \eqref{eq:eaRGD} for sufficiently small step sizes $\tau_k$, we first present some technical results.

\begin{lemma}\label{lm:estGradRetr}
 For all $\varphibf\in\OB$, $\zbf\in T_\varphibf\OB$, and $\tau \geq 0$, the Riemannian gradient $\grad_a\calE(\varphibf)$ in \eqref{eq:eagradE} can be bounded by $\|\grad_a\calE(\varphibf)\|_{a_{\varphibf}} \!\leq \|\varphibf\|_{a_{\varphibf}}$, and the retraction~\eqref{eq:retraction} satisfies$\!\!$
\begin{equation}\label{eq:estR}
\|\calR_\varphibf(\tau\zbf)-(\varphibf+\tau\zbf)\|_{a_\varphibf}
\leq \frac{\tau^2}{2}\, \|N^{-1}\|_2 \|\zbf\|_{\Lsp}^2 \|\varphibf+\tau\zbf\|_{a_\varphibf}.
\end{equation}
\end{lemma}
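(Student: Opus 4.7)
The first estimate reduces to a projection argument. Setting $\calG_\varphibf=\calA_\varphibf$ in the general projection formula~\eqref{eq:projection} and using $\out{\varphibf}{\varphibf}=N$, together with the fact that $\out{\varphibf}{\calA_\varphibf^{-1}\varphibf}^{-1}$ and $N$ are diagonal and hence commute, shows that the energy-adaptive gradient in~\eqref{eq:eagradE} coincides with the $a_\varphibf$-orthogonal projection of $\varphibf$ onto $T_\varphibf\,\OB$, i.e.\ $\grad_a\calE(\varphibf)=\calP_\varphibf^a\varphibf$. Since orthogonal projections in a Hilbert space are non-expansive in the associated inner-product norm, the bound $\|\grad_a\calE(\varphibf)\|_{a_\varphibf}\leq\|\varphibf\|_{a_\varphibf}$ follows at once.

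For the retraction estimate, I would set $\psibf:=\varphibf+\tau\zbf$ and expand $\out{\psibf}{\psibf}$. Using $\varphibf\in\OB$ together with $\zbf\in T_\varphibf\,\OB$, the cross term vanishes and one obtains the diagonal identity $\out{\psibf}{\psibf}=N+\tau^2 D$ with $D:=\out{\zbf}{\zbf}\in\DpR$ componentwise non-negative. Consequently, the definition of the normalization operator $\calN$ yields
\[
\calR_\varphibf(\tau\zbf)-\psibf = \psibf\, F, \qquad F := (N+\tau^2 D)^{-1/2}N^{1/2}-I_p,
\]
where $F\in\DpR$ is a real diagonal matrix.

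The third step is to exploit that $a_\varphibf$ is additive over the component pairs in both arguments, as displayed in~\eqref{eq:aphi}. Since $F$ acts on $\psibf$ as a real scalar scaling of each component, pulling out the maximum yields $\|\psibf F\|_{a_\varphibf}^2\leq(\max_{j}F_{jj}^2)\,\|\psibf\|_{a_\varphibf}^2$. Each diagonal entry satisfies $F_{jj}=\sqrt{N_j/(N_j+\tau^2 D_{jj})}-1$, and a standard rationalization combined with $D_{jj}\geq 0$ gives the sharp bound $|F_{jj}|\leq\tau^2 D_{jj}/(2N_j)$. The elementary estimates $D_{jj}\leq\trace(D)=\|\zbf\|_\Lsp^2$ and $N_j^{-1}\leq\|N^{-1}\|_2$ then combine to yield~\eqref{eq:estR}.

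Neither part is genuinely hard: both reduce to linear-algebraic facts about diagonal scalings and to an elementary inequality for the square-root function. The only point requiring minor care is to recognize that $\out{\cdot}{\cdot}$ takes values in $\DpR$ so that all scalar factors commute with each other, and that $a_\varphibf$ decouples across components so that the pointwise bound on $\max_j|F_{jj}|$ transfers into the full $a_\varphibf$-norm of $\psibf F$ without loss of a constant.
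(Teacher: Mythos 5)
Your proposal is correct, and both halves hold up under scrutiny: the identification $\grad_a\calE(\varphibf)=\calP_\varphibf^a\varphibf$ (valid since $N$ and $\out{\varphibf}{\calA_\varphibf^{-1}\varphibf}^{-1}$ are diagonal and hence commute, so \eqref{eq:eagradE} matches \eqref{eq:projection} with $\calG_\varphibf=\calA_\varphibf$ applied to $\varphibf$) combined with non-expansiveness of an $a_\varphibf$-orthogonal projection gives the gradient bound, while the expansion $\out{\varphibf+\tau\zbf}{\varphibf+\tau\zbf}=N+\tau^2\out{\zbf}{\zbf}$ (the cross terms vanish because $\out{\varphibf}{\zbf}=\out{\zbf}{\varphibf}=0_p$ for tangent vectors), the rationalization estimate $1-(1+s)^{-1/2}\leq s/2$ for $s\geq 0$, and the componentwise decoupling of $a_\varphibf$ in \eqref{eq:aphi} yield exactly \eqref{eq:estR}.

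A remark on how this relates to the paper: the paper does not actually spell out this proof. It asserts that the gradient bound ``follows from the expression \eqref{eq:eagradE}'' and defers the retraction bound to \cite[Prop.~3.11]{AltPS22} and \cite[Lem.~4.3]{ChenLLZ24}, which prove the analogous estimate in the Stiefel-manifold setting via perturbation arguments for matrix square roots. Your argument is the same in spirit but is fully self-contained and genuinely simpler, because on the generalized oblique manifold (a product of spheres) all Gram-type quantities $\out{\cdot}{\cdot}$ are diagonal: the matrix square root in the normalization $\calN$ reduces to $p$ scalar square roots, the scaling matrix $F$ commutes with everything, and the non-commutativity issues that make the Stiefel case delicate never arise. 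The only points needing care --- that $\out{\cdot}{\cdot}$ is the \emph{real} part of the complex Gram matrix (so $\out{\zbf}{\varphibf}=\out{\varphibf}{\zbf}$), that each component form $a_{\varphibf,j}$ is nonnegative so the maximum of $F_{jj}^2$ can be pulled out of the sum, and the commutation of diagonal factors --- are exactly the ones you flag. In short, your write-up supplies the details the paper outsources to citations, adapted correctly to the diagonal structure of $\OB$.
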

\begin{proof}
The estimate for the Riemannian gradient $\grad_a\calE(\varphibf)$ follows from the expression \eqref{eq:eagradE}.
The bound \eqref{eq:estR} can be proved analogously to \mbox{\cite[Prop.~3.11]{AltPS22}} and \cite[Lem.~4.3]{ChenLLZ24}.
\end{proof}

\begin{lemma}\label{lm:estEnergy} 
    Let Assumptions~\textup{\textbf{A1}}--\textup{\textbf{A3}} be fulfilled. For all $\varphibf,\psibf \in \Hsp$, it holds that
    \begin{equation}\label{eq:diffE}
    \calE(\varphibf) - \calE(\psibf)
    = \frac{1}{2}\big(a_\varphibf(\varphibf,\varphibf) - a_\varphibf(\psibf,\psibf)\big)
    - \frac{1}{4}\, \int_\calD (\varphibf \circ \overline{\varphibf} - \psibf \circ \overline{\psibf}) K (\varphibf \circ \overline{\varphibf} - \psibf \circ \overline{\psibf})^T \dx.
    \end{equation}
    In addition, for any $\varphibf \in \OB$ and $\zbf \in T_\varphibf\OB$, we have $\,\calE(\varphibf + \zbf) - \calE(\calR_\varphibf(\zbf)) \geq 0$.
\end{lemma}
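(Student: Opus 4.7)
\textit{Part (1):} The plan is a direct algebraic rearrangement. Set $f := \varphibf\circ\overline{\varphibf}$ and $g := \psibf\circ\overline{\psibf}$, both of which are real-valued. The representation~\eqref{eq:energy2} gives immediately
\[
\calE(\varphibf) - \calE(\psibf) = \tfrac{1}{2}\big(\|\varphibf\|_{\sR}^2 - \|\psibf\|_{\sR}^2\big) + \tfrac{1}{4}\int_\calD \big(fKf^T - gKg^T\big)\dx.
\]
Evaluating~\eqref{eq:aphi_short} at $(\varphibf,\varphibf)$ and at $(\psibf,\psibf)$ with the same base point~$\varphibf$ (the $\re$ drops since $f$, $g$, and $K$ are all real-valued) produces the substitution
\[
\|\varphibf\|_{\sR}^2 - \|\psibf\|_{\sR}^2 = a_{\varphibf}(\varphibf,\varphibf) - a_{\varphibf}(\psibf,\psibf) - \int_\calD fK(f-g)^T\dx.
\]
Inserting this into the previous equation and using the symmetry of $K$ (Assumption~\textbf{A3}) to collapse $fKf^T - gKg^T - 2fK(f-g)^T = -(f-g)K(f-g)^T$ yields exactly~\eqref{eq:diffE}.

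\textit{Part (2):} For the retraction inequality I work directly rather than invoking Part~(1). From~\eqref{eq:retraction} we have $\calR_\varphibf(\zbf) = (\varphibf+\zbf)\,D$ with $D = \diag(d_1,\ldots,d_p)$ and $d_j = N_j^{1/2}/\|\varphi_j+z_j\|_{L^2}$. Since $\varphibf\in\OB$ and $\zbf\in T_\varphibf\OB$, the conditions $\out{\varphibf}{\varphibf} = N$ and $\out{\varphibf}{\zbf} = 0_p$ give
\[
\|\varphi_j+z_j\|_{L^2}^2 = N_j + \|z_j\|_{L^2}^2 \geq N_j, \qquad j=1,\ldots,p,
\]
so $0 < d_j \leq 1$ for every~$j$. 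Expanding~\eqref{eq:energy2} component-wise for $\calR_\varphibf(\zbf)$ and collecting terms then produces
\[
\calE(\varphibf+\zbf) - \calE(\calR_\varphibf(\zbf)) = \tfrac{1}{2}\sum_{j=1}^p (1-d_j^2)\|\varphi_j+z_j\|_{\sR}^2 + \tfrac{1}{4}\int_\calD \sum_{i,j=1}^p (1-d_i^2 d_j^2)\,\kappa_{ij}\,|\varphi_i+z_i|^2|\varphi_j+z_j|^2\dx,
\]
in which every summand is non-negative: the coefficients $(1-d_j^2)$ and $(1-d_i^2d_j^2)$ are non-negative because $d_j\leq 1$, and $\kappa_{ij}\geq 0$ by Assumption~\textbf{A3}.

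\textit{Main obstacle.} Both parts are essentially algebraic and no serious analytic difficulty appears. The two decisive structural ingredients are the symmetry of $K$ (driving the combinatorial collapse in Part~(1)) and the interplay between the tangency condition $\out{\varphibf}{\zbf} = 0_p$ and the non-negativity $\kappa_{ij}\geq 0$ in Part~(2), which together guarantee $d_j\leq 1$ and, combined with $K\geq 0$ entry-wise, the non-negativity of every term in the closed-form expression above. Without the sign hypothesis on~$K$, the monotonicity of the energy under the retraction would fail in general.
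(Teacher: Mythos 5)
Your proposal is correct and follows essentially the same route as the paper: Part (1) rewrites both energies through the bilinear form $a_\varphibf$ anchored at the single base point $\varphibf$ and collapses the interaction terms via the symmetry of $K$, and Part (2) writes the retraction as $(\varphibf+\zbf)$ times a diagonal scaling matrix, uses the tangency condition $\out{\varphibf}{\zbf}=0_p$ to show each scaling factor lies in $(0,1]$, and concludes non-negativity term by term from \eqref{eq:energy2} and $\kappa_{ij}\geq 0$. The only cosmetic deviation is your component-wise use of the notation $\|\cdot\|_{\sR}$, which the paper instead writes out as the integral of $\|\nablaR\psi_j\|^2+V_j^{\sR}|\psi_j|^2$; the content is identical.
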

\begin{proof}
 For all $\varphibf,\psibf \in \Hsp$, we have
\begin{align*}
\calE(\varphibf) & = \frac{1}{2}a_\varphibf(\varphibf, \varphibf) - \frac{1}{4}\int_\calD(\varphibf \circ \overline{\varphibf})K(\varphibf \circ \overline{\varphibf})^T \dx, \\
\calE(\psibf) & = \frac{1}{2}a_\varphibf(\psibf,\psibf) - \frac{1}{2}\int_\calD(\varphibf \circ \overline{\varphibf})K(\psibf \circ \overline{\psibf})^T \dx + \frac{1}{4} \int_\calD(\psibf \circ \overline{\psibf})K(\psibf \circ \overline{\psibf})^T.
\end{align*}
Combining these relations and using the symmetry of $K$, we obtain \eqref{eq:diffE}.

Furthermore, for any $\varphibf \in \OB$ and $\zbf \in T_\varphibf\OB$, we set $\psibf = \varphibf + \zbf = (\psi_1, \ldots, \psi_p)$ and observe that $\calR_\varphibf(\zbf) = \psibf\,\Sigma$, where $\Sigma \in \DpR$ with
the diagonal entries 
\[
\Sigma_{jj} = N_j^{1/2}\|\psi_j\|_{L^2}^{-1} 
= \big(1 + N_j^{-1}\|z_j\|_{L^2}^2\big)^{-1/2}, \qquad j = 1, \dots, p.
\]
Using  \eqref{eq:energy2}, we then compute
\begin{align*}
    \calE(\varphibf & + \zbf) - \calE(\calR_\varphibf(\zbf))  = \calE(\psibf) - \calE(\psibf\,\Sigma) \\
 & = \frac{1}{2} \left( \|\psibf\|^2_{\sR} -  \|\psibf\,\Sigma\|^2_{\sR} \right)
 + \frac{1}{4}\, \int_{\calD} (\psibf \circ \overline{\psibf})(K-\Sigma^2K\Sigma^2)(\psibf \circ \overline{\psibf})^T \, \dx  \\
 & = \sum_{j=1}^p \int_\calD \frac{1}{2}\big(1\!-\!\Sigma_{jj}^2\big)\big(\|\nablaR\psi_j\|^2+V_j^{\sR}(x)|\psi_j|^2\big)\dx 
    + \frac{1}{4}\sum_{i,j=1}^p(1\! -\! \Sigma_{ii}^2\Sigma_{jj}^2)\kappa_{ij}\int_{\calD} |\psi_i|^2|\psi_j|^2 \dx \geq 0,
\end{align*}
where the last inequality holds due to $0 < \Sigma_{jj} \leq 1$ and $\kappa_{ij} \geq 0$ for $i, j = 1, \dots, p$.
\end{proof}

The following theorem shows that for sufficiently small step sizes, the iterates of the eaRGD method~\eqref{eq:eaRGD} are uniformly bounded and the energy functional $\,\calE$ decays. It can be proved analogously to the non-rotating multicomponent case \cite[Th.~13]{AHPS25}.

\begin{theorem}[Energy decay]\label{thm:EnergyDecay}
\!\!\!\!Let Assumptions~\textup{\textbf{A1}}--\textup{\textbf{A3}} be fulfilled and let $C_K = C_4^4\|K\|_2$. Then there exists $C_0 > 0$ such that for any step size \mbox{$0 < \tau_{\min} \leq \tau_k \leq \tau_{\max} \leq \big(1 + \tfrac{9}{2}C_KC_0^2\big)^{-1} < 1$}, the sequence $\{\varphibf_k\}_{k=0}^\infty\!\subset\! \OB$ generated by the eaRGD method \eqref{eq:eaRGD} has the following pro\-perties:
\[
       {\rm (i)}\; \|\varphibf_k\|_{a_{\varphibf_k}}\leq C_0, 
       \qquad {\rm (ii)} \;\calE(\varphibf_k)-\calE(\varphibf_{k+1})\geq 
       \frac{1}{2}\, \tau_{\min}\, \|\grad_a\calE(\varphibf_k)\|_{a_{\varphibf_k}}^2.
\]
\end{theorem}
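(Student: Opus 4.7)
\medskip

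\noindent\textbf{Proof proposal.}
The plan is to argue by induction on $k$, simultaneously bounding $\|\varphibf_k\|_{a_{\varphibf_k}}$ and proving monotone energy decay. The starting observation is that by~\eqref{eq:energy2} one has
\[
\tfrac12\|\varphibf\|_{\sR}^2+\tfrac14\!\int_{\calD}(\varphibf\circ\overline{\varphibf})K(\varphibf\circ\overline{\varphibf})^T\dx=\calE(\varphibf),
\]
so $\|\varphibf\|_{a_{\varphibf}}^2=\|\varphibf\|_{\sR}^2+\int_{\calD}(\varphibf\circ\overline{\varphibf})K(\varphibf\circ\overline{\varphibf})^T\dx\leq 4\calE(\varphibf)$. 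Hence if we pick $C_0>0$ with $C_0^2\geq 4\calE(\varphibf_0)$, the base case $\|\varphibf_0\|_{a_{\varphibf_0}}\leq C_0$ is immediate, and any step that decreases the energy will automatically propagate the uniform bound~(i) to the next iterate.

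For the inductive step, assume (i) at step~$k$. Set $\gbf_k=\grad_a\calE(\varphibf_k)\in T_{\varphibf_k}\OB$ and $\psibf_k=\varphibf_k-\tau_k\gbf_k$. Lemma~\ref{lm:estEnergy} gives $\calE(\varphibf_{k+1})=\calE\big(\calR_{\varphibf_k}(-\tau_k\gbf_k)\big)\leq\calE(\psibf_k)$, so it suffices to lower bound $\calE(\varphibf_k)-\calE(\psibf_k)$. Identity~\eqref{eq:diffE} splits this into a quadratic and a quartic contribution. For the quadratic part, I use the crucial identity $a_{\varphibf_k}(\varphibf_k,\gbf_k)=\|\gbf_k\|_{a_{\varphibf_k}}^2$, which follows because $\gbf_k=\calP^a_{\varphibf_k}\varphibf_k$, so $\varphibf_k-\gbf_k$ is $a_{\varphibf_k}$-orthogonal to $T_{\varphibf_k}\OB\ni\gbf_k$. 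This yields
\[
\tfrac12\bigl(a_{\varphibf_k}(\varphibf_k,\varphibf_k)-a_{\varphibf_k}(\psibf_k,\psibf_k)\bigr)=\tfrac{\tau_k(2-\tau_k)}{2}\|\gbf_k\|_{a_{\varphibf_k}}^2.
\]

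For the quartic part, writing $\varphibf_k\circ\overline{\varphibf_k}-\psibf_k\circ\overline{\psibf_k}=2\tau_k\re(\varphibf_k\circ\overline{\gbf_k})-\tau_k^2\,\gbf_k\circ\overline{\gbf_k}$ and expanding the $K$-bilinear form, every resulting integral is of the form $\int K(\ubf\circ\overline{\ubf})\cdot(\vbf\circ\overline{\vbf})$ with $\ubf,\vbf\in\{\varphibf_k,\gbf_k\}$. Assumption~\textbf{A3} (non-negativity and symmetry of $K$), H\"older's inequality, and the Sobolev embedding $H^1_0(\calD,\C)\hookrightarrow L^4(\calD,\C)$ with constant $C_4$ together with the norm equivalence between $\|\cdot\|_H$ and $\|\cdot\|_{\sR}$ (and hence $\|\cdot\|_{a_{\varphibf_k}}$ via coercivity with constant~$1$) bound every such integral by a universal multiple of $C_K\|\varphibf_k\|_{a_{\varphibf_k}}^{p_1}\|\gbf_k\|_{a_{\varphibf_k}}^{p_2}$ with $p_1+p_2=4$. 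Collecting all terms gives a bound $\leq \frac12 C_K\cdot\bigl(4\tau_k^2+4\tau_k^3+\tau_k^4\bigr)\|\varphibf_k\|_{a_{\varphibf_k}}^2\|\gbf_k\|_{a_{\varphibf_k}}^2\leq \frac{9}{2}C_K\tau_k^2\,C_0^2\,\|\gbf_k\|_{a_{\varphibf_k}}^2$, where I used $\tau_k\leq\tau_{\max}<1$ and (by Lemma~\ref{lm:estGradRetr}) $\|\gbf_k\|_{a_{\varphibf_k}}\leq\|\varphibf_k\|_{a_{\varphibf_k}}\leq C_0$. Combining with the quadratic contribution,
\[
\calE(\varphibf_k)-\calE(\psibf_k)\geq\tfrac{\tau_k}{2}\Bigl(2-\tau_k-\tfrac{9}{2}C_KC_0^2\tau_k\Bigr)\|\gbf_k\|_{a_{\varphibf_k}}^2,
\]
and the step size restriction $\tau_{\max}\leq(1+\tfrac{9}{2}C_KC_0^2)^{-1}$ ensures the factor in parentheses is $\geq 1$, which gives~(ii). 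Monotonicity of the energy then yields $\|\varphibf_{k+1}\|_{a_{\varphibf_{k+1}}}^2\leq 4\calE(\varphibf_{k+1})\leq 4\calE(\varphibf_0)\leq C_0^2$, closing the induction.

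The main obstacle is the bookkeeping of the quartic terms: one has to expand $(2\tau_k\re(\varphibf_k\circ\overline{\gbf_k})-\tau_k^2\gbf_k\circ\overline{\gbf_k})K(\cdots)^T$ into cross terms of orders $\tau_k^2,\tau_k^3,\tau_k^4$, apply the Sobolev embedding uniformly to each one without losing the constant $C_K=C_4^4\|K\|_2$, and carefully exploit the norm equivalence between $\|\cdot\|_{a_{\varphibf_k}}$ and the $H$-norm to recover powers of $C_0$ consistent with the claimed constant $\tfrac{9}{2}$. The rotation term is absorbed cleanly into this bookkeeping through the $(\V,\Omega)$-dependent $\sR$-inner product defined in \eqref{R-inner-product}, so that the structure of the proof is identical to the non-rotating two-component argument of \cite[Th.~13]{AHPS25}.
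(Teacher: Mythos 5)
Your proposal is correct and takes essentially the same route as the paper, which states the result without proof as being analogous to \cite[Th.~13]{AHPS25} but supplies exactly the ingredients you use: Lemma~\ref{lm:estGradRetr} for $\|\grad_a\calE(\varphibf_k)\|_{a_{\varphibf_k}}\le\|\varphibf_k\|_{a_{\varphibf_k}}$, Lemma~\ref{lm:estEnergy} for the identity \eqref{eq:diffE} and for the comparison $\calE(\varphibf_k-\tau_k\gbf_k)\ge\calE(\varphibf_{k+1})$, the projection identity $a_{\varphibf_k}(\varphibf_k,\gbf_k)=\|\gbf_k\|_{a_{\varphibf_k}}^2$, and the induction with $C_0^2=4\,\calE(\varphibf_0)$. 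One cosmetic slip: the quartic term in \eqref{eq:diffE} carries the prefactor $\tfrac14$, not $\tfrac12$, so the intermediate bound should read $\tfrac14 C_K\big(4\tau_k^2+4\tau_k^3+\tau_k^4\big)C_0^2\|\gbf_k\|_{a_{\varphibf_k}}^2\le\tfrac94 C_K\tau_k^2C_0^2\|\gbf_k\|_{a_{\varphibf_k}}^2$, which is precisely what your final display and the step-size restriction $\tau_{\max}\le\big(1+\tfrac92 C_KC_0^2\big)^{-1}$ require.
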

We now state a lower bound on the energy decay between successive iterates, showing that the decrease in energy is controlled from below by the squared $a_{\varphibf_k}$-norm of the difference of two consecutive iterates.
\begin{corollary}\label{cor:Energyerror_lowerbound}
There exists a constant $C(N, \tau_{\min}, \tau_{\max}, C_0)$ depending on $N, \tau_{\min}, \tau_{\max}$, and~$C_0$ as defined in Theorem~\textup{\ref{thm:EnergyDecay}} such that 
\begin{equation}\label{eq:estEnergy}
  \calE(\varphibf_k)-\calE(\varphibf_{k+1})  \geq C(N, \tau_{\min}, \tau_{\max}, C_0) \|\varphibf_{k}-\varphibf_{k+1} \|_{a_{\varphibf_k}}^2.
    \end{equation}
\end{corollary}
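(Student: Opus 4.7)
\textbf{Proof proposal for Corollary~\ref{cor:Energyerror_lowerbound}.} The plan is to combine the energy-decay lower bound from Theorem~\ref{thm:EnergyDecay}(ii) with an upper bound on the consecutive-iterate increment $\|\varphibf_k-\varphibf_{k+1}\|_{a_{\varphibf_k}}$ in terms of the Riemannian gradient $\|\grad_a\calE(\varphibf_k)\|_{a_{\varphibf_k}}$. The key observation is that, by the definition of eaRGD, $\varphibf_{k+1}=\calR_{\varphibf_k}(-\tau_k\,\grad_a\calE(\varphibf_k))$. Writing
\[
\varphibf_{k+1}-\varphibf_k = -\tau_k\,\grad_a\calE(\varphibf_k) \;+\; \big[\calR_{\varphibf_k}(-\tau_k\,\grad_a\calE(\varphibf_k)) - (\varphibf_k-\tau_k\,\grad_a\calE(\varphibf_k))\big]
\]
and applying the triangle inequality in the $a_{\varphibf_k}$-norm splits the task into controlling the tangent displacement (trivially bounded by $\tau_k\,\|\grad_a\calE(\varphibf_k)\|_{a_{\varphibf_k}}$) and the retraction correction.

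The retraction correction is handled by the estimate \eqref{eq:estR} of Lemma~\ref{lm:estGradRetr} applied with $\zbf=-\grad_a\calE(\varphibf_k)$ and $\tau=\tau_k$, which yields a bound of the form $\frac{\tau_k^2}{2}\|N^{-1}\|_2\,\|\grad_a\calE(\varphibf_k)\|_{\Lsp}^2\,\|\varphibf_k-\tau_k\,\grad_a\calE(\varphibf_k)\|_{a_{\varphibf_k}}$. To convert this into a bound involving only $\|\grad_a\calE(\varphibf_k)\|_{a_{\varphibf_k}}$, I would use the Sobolev embedding $\Hsp\hookrightarrow \Lsp$ together with the norm equivalence $c_1\|\cdot\|_{\Hsp}\le\|\cdot\|_{\sR}\le\|\cdot\|_{a_{\varphibf_k}}$ (the last inequality uses the coercivity constant $1$ from Proposition~\ref{prop:aphi}) to obtain $\|\grad_a\calE(\varphibf_k)\|_{\Lsp}\le C_{\rm emb}\,\|\grad_a\calE(\varphibf_k)\|_{a_{\varphibf_k}}$ for a constant $C_{\rm emb}$ depending only on $\calD$ and $c_1$.

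The remaining $a_{\varphibf_k}$-norm $\|\varphibf_k-\tau_k\,\grad_a\calE(\varphibf_k)\|_{a_{\varphibf_k}}$ is bounded by $\|\varphibf_k\|_{a_{\varphibf_k}}+\tau_k\,\|\grad_a\calE(\varphibf_k)\|_{a_{\varphibf_k}}\le (1+\tau_{\max})C_0$ using Theorem~\ref{thm:EnergyDecay}(i) and the inequality $\|\grad_a\calE(\varphibf_k)\|_{a_{\varphibf_k}}\le\|\varphibf_k\|_{a_{\varphibf_k}}\le C_0$ from Lemma~\ref{lm:estGradRetr}. Factoring $\|\grad_a\calE(\varphibf_k)\|_{a_{\varphibf_k}}$ out of the retraction term and using once more $\|\grad_a\calE(\varphibf_k)\|_{a_{\varphibf_k}}\le C_0$ to absorb the remaining factor, I obtain a linear estimate
\[
\|\varphibf_{k+1}-\varphibf_k\|_{a_{\varphibf_k}}\le M(N,\tau_{\max},C_0)\,\|\grad_a\calE(\varphibf_k)\|_{a_{\varphibf_k}}
\]
with $M=\tau_{\max}+\tfrac{1}{2}\tau_{\max}^2\|N^{-1}\|_2 C_{\rm emb}^2(1+\tau_{\max})C_0^2$.

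Squaring this inequality and combining with Theorem~\ref{thm:EnergyDecay}(ii), which gives $\calE(\varphibf_k)-\calE(\varphibf_{k+1})\ge \tfrac{1}{2}\tau_{\min}\,\|\grad_a\calE(\varphibf_k)\|_{a_{\varphibf_k}}^2$, yields the claim with $C(N,\tau_{\min},\tau_{\max},C_0)=\tau_{\min}/(2M^2)$. The only step requiring a little care is the explicit identification of $C_{\rm emb}$ and its absorption into a constant depending on $N$, $\tau_{\min}$, $\tau_{\max}$, and $C_0$; everything else is a direct chaining of the already-established bounds. I do not anticipate any genuine obstacle, since both the retraction-error estimate and the a~priori $a_{\varphibf_k}$-norm bound on the iterates are provided by the preceding lemmas.
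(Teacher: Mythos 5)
Your proposal is correct and takes essentially the same route as the paper: the identical decomposition of $\varphibf_{k+1}-\varphibf_k$ into the tangent step plus the retraction correction, bounded via Lemma~\ref{lm:estGradRetr} together with the uniform bound and energy-decay estimate of Theorem~\ref{thm:EnergyDecay}, then squared and combined to give $C=\tau_{\min}/(2M^2)$. The only (harmless) difference is that you explicitly track the embedding constant $C_{\rm emb}$ when converting $\|\grad_a\calE(\varphibf_k)\|_{\Lsp}$ into $\|\grad_a\calE(\varphibf_k)\|_{a_{\varphibf_k}}$, a passage the paper's proof performs implicitly.
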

     
\begin{proof}
 To see this, we first express the difference of $\varphibf_{k+1}$ and $\varphibf_k$ as 
\[
\varphibf_{k+1}-\varphibf_{k} = \calR_{\varphibf_k}(-\tau_k \gbf_k) - (\varphibf_k -\tau_k \gbf_{k}) - \tau_k \gbf_k, 
\]
where $\gbf_k=\grad_a\calE(\varphibf_k)$. Then using Lemma~\ref{lm:estGradRetr} and the relation $\|\varphibf_k\|_{a_{\varphibf_k}} \leq C_0$, we can bound the above difference as
\begin{align*}
    \| \varphibf_{k+1}-\varphibf_{k}\|_{a_{\varphibf_k}} 
    &\leq \frac{\tau_k^2}{2} \|N^{-1}\|_{2} \|\gbf_k\|_{a_{\varphibf_k}}^2 \|\varphibf_k - \tau_k \gbf_k \|_{a_{\varphibf_k}} + \tau_k \|\gbf_k\|_{a_{\varphibf_k}} \\
    &\leq \tau_k \|\gbf_k\|_{a_{\varphibf_k}} \big( 1 + \tau_k \|N^{-1}\|_{2}  \|\gbf_k\|_{a_{\varphibf_k}} \|\varphibf_k - \tau_k \gbf_k \|_{a_{\varphibf_k}} \big) \leq \tau_{\max} ( 1 + C_N)\|\gbf_k\|_{a_{\varphibf_k}},
\end{align*}
with $C_N = \tau_{\max} \|N^{-1}\|_{2}(1+\tau_{\max}) C_0^2$. The last inequality follows from the definition of $\gbf_k$ and the orthogonal projection property. This bound together with that in Theorem~\ref{thm:EnergyDecay} (ii) implies the estimate~\eqref{eq:estEnergy}.
\end{proof}

We are now ready to prove the global convergence result in Theorem~\ref{thm:global_convergence}.

{\it Proof of Theorem~\textup{\ref{thm:global_convergence}}.}

(i) As $\tau_{\max}$ is strictly smaller than $(1+\tfrac{9}{2} C_KC_0^2)^{-1}$, the assumptions of Theorem~\ref{thm:EnergyDecay} are fulfilled. Then the existence of a~limit $\calE_\infty = \lim_{k\to\infty}\calE(\varphibf_k)$ follows from the energy decay and the boundedness of $\,\calE$ from below. 

(ii) By Theorem~\ref{thm:EnergyDecay}(i), the sequence of iterations $\{ \varphibf_k \}_{k=0}^{\infty}$ is uniformly bounded in $\Hsp$-norm. Then the Rellich-Kondrachov theorem guarantees the existence of a~subsequence $\{ \varphibf_{k_l}\}_{l=0}^{\infty}$ that converges to $\varphibf_* \in \OB$ weakly in $\Hsp$ and strongly in $[L^4(\calD,\C)]^p$ and $L$.

Our goal is now to show that $\calA_{\varphibf_{k_l}}^{-1}\varphibf_{k_l}$ converges to $\calA_{\varphibf_*}^{-1}\varphibf_*$ strongly in $\Hsp$. First, for any $k \geq 0$, we obtain that
\begin{equation*}
  c_1\|\calA_{\varphibf_{k}}^{-1}\varphibf_{k}\|_{\Hsp}^2 
  \leq \|\calA_{\varphibf_k}^{-1}\varphibf_k\|_{a_{\varphibf_{k}}}^2 
  = (\varphibf_k,\calA_{\varphibf_k}^{-1}\varphibf_k)_\Lsp \leq \|\varphibf_k\|_\Lsp\|\calA_{\varphibf_k}^{-1}\varphibf_k\|_\Lsp 
  \leq C_2 \trace N \|\calA_{\varphibf_k}^{-1}\varphibf_k\|_\Hsp,
\end{equation*}
 where $C_2$ comes from the Sobolev embedding $\Hsp\hookrightarrow \Lsp$. 
 Therefore, the sequence $\{\calA_{\varphibf_k}^{-1}\varphibf_k\}_{k=0}^\infty$ is uniformly bounded in $\Hsp$-norm. Further, for any $\vbf \in \Hsp$, we have  
\begin{equation*}
\big| a_{\varphibf_*}(\calA_{\varphibf_{k_l}}^{-1}\varphibf_{k_l} - \calA_{\varphibf_*}^{-1}\varphibf_*,\vbf) \big| \leq \big| a_{\varphibf_{k_l}}(\calA_{\varphibf_{k_l}}^{-1}\varphibf_{k_l},\vbf) - a_{\varphibf_*}(\calA_{\varphibf_{k_l}}^{-1}\varphibf_{k_l},\vbf) \big| + \big|(\varphibf_{k_l} - \varphibf_*, \vbf)_\Lsp \big|. 
\end{equation*}
The first term converges to zero as $l\to\infty$ due to \eqref{eq:boundedness} and the boundedness of $\{\calA_{\varphibf_k}^{-1}\varphibf_k\}_{k=0}^\infty$. The second term tends to zero as $l\to\infty$ due to the weak convergence of $\varphibf_{k_l}$ to $\varphibf_*$ in $\Hsp$ and therefore in $\Lsp$. This implies the weak convergence of $\calA_{\varphibf_{k_l}}^{-1}\varphibf_{k_l}$ to $\calA_{\varphibf_*}^{-1}\varphibf_*$ in $\Hsp$ and, as a consequence, the strong convergence in $\Lsp$. Additionally, we can estimate 
\begin{align*}
\big|\|\calA_{\varphibf_{k_l}}^{-1}\varphibf_{k_l}\|_{a_{\varphibf_*}}^2 - \|\calA_{\varphibf_*}^{-1}\varphibf_*\|_{a_{\varphibf_*}}^2 \big| \leq & \big|\|\calA_{\varphibf_{k_l}}^{-1}\varphibf_{k_l}\|_{a_{\varphibf_*}}^2 - \|\calA_{\varphibf_{k_l}}^{-1}\varphibf_{k_l}\|_{a_{\varphibf_{k_l}}}^2 \big| \\
& + \big| (\varphibf_{k_l}, \calA_{\varphibf_{k_l}}^{-1}\varphibf_{k_l} - \calA_{\varphibf_*}^{-1}\varphibf_*)_\Lsp \big| + \big| (\varphibf_{k_l} - \varphibf_*,\calA_{\varphibf_*}^{-1}\varphibf_*)_\Lsp\big|, 
\end{align*}
 where all three terms go to zero as $l\to\infty$ due to the corresponding strong convergences in~$\Lsp$ and boundedness. Therefore, $\calA_{\varphibf_{k_l}}^{-1}\varphibf_{k_l}$ converges strongly in $\|\cdot\|_{a_{\varphibf_*}}$ 
 to $\calA_{\varphibf_*}^{-1}\varphibf_*$. Moreover, for $\Lambda_{k_l}= N \out{\varphibf_{k_l}}{\calA^{-1}_{\varphibf_{k_l}} \varphibf_{k_l} }^{-1}$ and $\Lambda_* = N \out{\varphibf_*}{\calA^{-1}_{\varphibf_*} \varphibf_*}^{-1}$, this also implies that $\lim_{l\to\infty}\Lambda_{k_l} = \Lambda_*$.

Introducing $\gbf_{k_l} = \grad_a\calE(\varphibf_{k_l}) = \varphibf_{k_l} - \calA_{\varphibf_{k_l}}^{-1}\varphibf_{k_l}\Lambda_{k_l}$ and $\gbf_* = \grad_a\calE(\varphibf_*) = \varphibf_* - \calA_{\varphibf_*}^{-1}\varphibf_*\Lambda_*$, we notice that $\gbf_{k_l}$ converges weakly in $\Hsp$ to $\gbf_*$. As we know by Theorem~\ref{thm:EnergyDecay}(ii) and the convergence of $\,\calE(\varphibf_k)$ that 
$0 = \lim_{l\to\infty}\|\gbf_{k_l}\|_\Hsp \geq \|\gbf_*\|_\Hsp$,
where the lower bound follows from the weak lower semicontinuity of the norm,
$\gbf_{k_l}$ converges strongly in $\Hsp$ to $\gbf_* = 0$. 
Therefore, $\varphibf_{k_l} = \gbf_{k_l} + \calA_{\varphibf_{k_l}}^{-1}\varphibf_{k_l}\Lambda_{k_l}$ 
converges strongly as well and, due to the continuity of $\,\calE$, we  have $\calE_\infty=\lim_{k\to\infty} \calE(\varphibf_{k_l}) = \calE(\varphibf_*)$.

Finally, applying $\calA_{\varphibf_*}$ to both sides of 
$0 = \gbf_* = \varphibf_* - \calA_{\varphibf_*}^{-1}\varphibf_* N\out{\varphibf_*}{\calA_{\varphibf_*}^{-1}\varphibf_*}^{-1}$, we obtain that $\calA_{\varphibf_*}\varphibf_* = \varphibf_* N\out{\varphibf_*}{\calA_{\varphibf_*}^{-1}\varphibf_*}^{-1}$ and $\coout{\calA_{\varphibf_*}\varphibf_*}{\varphibf_*}N^{-1}=N \out{\varphibf_*}{\calA^{-1}_{\varphibf_*} \varphibf_*}^{-1}=\Lambda_*$. Thus, $\varphibf_*$ is indeed a constraint critical point of $\,\calE$ with the Lagrange multiplier $\Lambda_*$.

(iii) Since the limit energy $\calE_\infty$ is unique, all accumulation points of $\{\varphibf_k\}_{k=0}^\infty$ have the same energy level. Now assume that one such an~accumulation point, say $\varphibf_*$, is a locally quasi-unique ground state to \eqref{eq:min}. Then all other accumulation points must be ground states. To account for the phase shifts, we work on the quotient manifold $\,\Seg$, the space of equivalence classes in $\OB$, and later on translate our result back to the oblique manifold $\OB$. To this end, we define the distance between the equivalence classes $[\varphibf]$ and $[\varthetabf]$ on $\,\Seg$ by
\begin{align*}
\dist([\varphibf],[\varthetabf]) := \inf_{(\widehat{\varphibf},\widehat{\varthetabf}) \in [\varphibf] \times [\varthetabf] } \| \widehat{\varphibf} - \widehat{\varthetabf}\|_{\Hsp}. 
\end{align*}
As $[\varphibf_*]$ is a strict local minimizer to \eqref{eq:min_quotient}, there exists a~$\delta > 0$ such that $\dist([\varphibf_*], [\varthetabf_*]) > \delta$ for every other ground state $[\varthetabf_*] \in \Seg$. Hence, for any $0 < \epsilon < \delta$, there are only finitely many $\varphibf_k$ such that $\frac{\epsilon}{2} \leq \dist([\varphibf_*], [\varphibf_k]) \leq \epsilon$. Otherwise, we could select a weakly converging subsequence among these elements $\varphibf_k$ and repeat the same arguments as in (ii) to prove that this subsequence is strongly converging in $\Hsp$ to a ground state $\varphibf_*^\prime$ with $\dist([\varphibf_*],[\varphibf_*^\prime]) \leq \epsilon < \delta$. Further, by Corollary~\ref{cor:Energyerror_lowerbound}, we obtain that $\dist([\varphibf_k],[\varphibf_{k+1}]) \leq \|\varphibf_k - \varphibf_{k+1}\|_\Hsp \rightarrow 0$ as $k \to \infty$. Hence, there are only finitely many $\varphibf_k$ such that $\dist([\varphibf_k],[\varphibf_*]) \geq \epsilon$. This implies that $[\varphibf_k]$ converges to $[\varphibf_*]$. With the diamagnetic inequality \cite[Th.~7.21]{LieL01}, we finally obtain that
\[
\| \, |\varphibf_k| - |\varphibf_*| \, \|_{\Hsp} \le 
\inf_{ \Theta_1,\Theta_2 \in \DpS } \| \varphibf_k\Theta_1 - \varphibf_*\Theta_2 \|_{\Hsp}
= \dist([\varphibf_k],[\varphibf_*]) \rightarrow 0 \quad \text{as } \;k \to \infty.
\]
This completes the proof.  \hfill\proofbox


\bibliographystyle{siamplain}
\bibliography{ref}

@Preamble{"\def\cprime{$'$}"}

@Book{AbrMR88,
    author = {Abraham, R. and Marsden, J.E. and Ratiu, T.},
    title = {Manifolds, Tensor Analysis, and Applications},
    publisher = {Springer Science+Business Media},
    address = {New York, NY},
    year = {1988},
}

@article{AHYY26,
    author = {Ai, Y. and Henning, P. and Yadav, M. and Yuan, S.},
    title = {Riemannian conjugate {S}obolev gradients and their application to compute ground states of {BEC}s},
    journal = {J. Comput. Appl. Math.},
    volume = {473},
    eid = {116866},
    year = {2026},
    doi = {10.1016/j.cam.2025.116866},
}

@article{AHPS25,
    author = {Altmann, R. and Hermann, M. and Peterseim, D. and Stykel, T.},
    title = {Riemannian optimization methods for ground states of multicomponent {B}ose--{E}instein condensates},       
    year = {2025},
    journal = {IMA J. Numer. Anal.},
    eid = {draf046},
    number = {},
    pages = {}, 
}

@article{AltPS22,
    author = {Altmann, R. and Peterseim, D. and Stykel, T. },
    title = {Energy-adaptive {R}iemannian optimization on the {S}tiefel manifold}, 
    year = {2022},
    volume = {56}, 
    number = {5}, 
    journal = {ESAIM Math. Model. Numer. Anal.},
    fjournal = {ESAIM: Mathematical Modelling and Numerical Analysis}, 
    pages = {1629--1653},
    doi = {10.1051/m2an/2022036},
}

@article{AltPS24,
    title = {Riemannian {N}ewton methods for energy minimization problems of {K}ohn--{S}ham type}, 
    author = {Altmann, R. and Peterseim, D. and Stykel, T. },
    year = {2024},
    volume = {101}, 
    number = {}, 
    journal = {J. Sci. Comput.},
    eid = {6},
    doi = {10.1007/s10915-024-02612-3},
}

@article{AnsM25,
    title = {Adaptive gradient descent on {R}iemannian manifolds with nonnegative curvature},
    author = {Ansari-{\"O}nnestam, A. and Malitsky, Y.},
    journal = {arXiv preprint arXiv:2504.16724},
    year = {2025},
    doi = {10.48550/arXiv.2504.16724},
}

@article{AntD14,
    author = {Antoine, X. and Duboscq, R.},
    title = {{GPEL}ab, a {M}atlab toolbox to solve {G}ross–{P}itaevskii equations~{I}: {C}omputation of stationary solutions},
    journal= {Comput. Phys. Commun.},
    fjournal = {Computer Physics Communications},
    volume = {185},
    number = {11},
    pages = {2969-2991},
    year = {2014},
    doi = {10.1016/j.cpc.2014.06.026},
}

@article{AntLT17,
    author = {Antoine, X. and Levitt, A. and Tang, Q.},
    title = {Efficient spectral computation of the stationary states of rotating {B}ose–{E}instein condensates by preconditioned nonlinear conjugate gradient methods},
    journal = {J. Comput. Phys.},
    volume = {343},
    pages = {92-109},
    year = {2017},
    doi = {10.1016/j.jcp.2017.04.040},
}

@article{Bao04,
    author = {Bao, W.},
    title = {Ground States and Dynamics of Multicomponent {B}ose--{E}instein Condensates},
    journal = {Multiscale Model. Simul.},
    fjournal = {Multiscale Modeling \& Simulation},
    volume = {2},
    number = {2},
    pages = {210--236},
    year = {2004},
    doi = {10.1137/030600209},
}

@article{BaoC13,
    author = {Bao, W. and Cai, Y.},
    title = {Mathematical theory and numerical methods for {B}ose--{E}instein condensation},
    fjournal = {Kinetic and Related Models},
    journal = {Kinet. Relat. Mod.},
    volume = {6},
    number = {1},
    pages = {1--135},
    year = {2013},
    doi = {10.3934/krm.2013.6.1},
}

@article{BaoD04,
    author = {Bao, W. and Du, Q.},
    title = {Computing the ground state solution of {B}ose--{E}instein condensates by a~normalized gradient flow},
    journal = {SIAM J. Sci. Comput.},
    fjournal = {SIAM Journal on Scientific Computing},
    volume = {25},
    year = {2004},
    number = {5},
    pages = {1674--1697},
    issn = {1064-8275},
    coden = {SJOCE3},
    mrclass = {82C80 (35Q55 65M06 65M70)},
    mrnumber = {2087331 (2005f:82117)},
    doi = {10.1137/S1064827503422956},
}

@article{BWM05,
    author = {Bao, W. and Wang, H. and Markowich, P. A.},
    title = {Ground, symmetric and central vortex states in rotating {B}ose-{E}instein condensates},
    journal = {Commun. Math. Sci.},
    fjournal = {Communications in Mathematical Sciences},
    volume = {3},
    year = {2005},
    number = {1},
    pages = {57--88},
    issn = {1539-6746},
    mrclass = {82B10 (35P30 35Q55 65M06)},
    mrnumber = {2132826 (2006b:82013)},
    mrreviewer = {Michele Correggi},
    url = {http://projecteuclid.org/euclid.cms/1111095641}
}

@article {BilM14,
    author = {Biliotti, L. and Mercuri, F.},
    title = {Properly discontinuous actions on {H}ilbert manifolds}, 
    journal = {Bull. Braz. Math. Soc., New Series},
    volume = {45},
    year = {2014},
    pages = {433-452},
    doi = {10.1007/s00574-014-0057-7},
}

@Book{Boumal23,
    author    = {Boumal, N.},
    title     = {An Introduction to Optimization on Smooth Manifolds},
    publisher = {Cambridge University Press},
    address = {Cambridge},
    year      = {2023},
}

@article{CalORT09,
    author = {Caliari, M. and Ostermann, A. and Rainer, S. and Thalhammer, M.},
    title = {A minimisation approach for computing the ground state of {G}ross--{P}itaevskii systems},
    journal = {J. Comput. Phys.},
    fjournal = {Journal of Computational Physics},
    volume = {228},
    year = {2009},
    number = {2},
    pages = {349--360},
    issn = {0021-9991},
    doi = {10.1016/j.jcp.2008.09.018},
}

@article {CDLX23,
    author = {Chen, H. and Dong, G. and Liu, W. and Xie, Z.},
    title = {Second-order flows for computing the ground states of rotating {B}ose-{E}instein condensates},
    journal = {J. Comput. Phys.},
    fjournal = {Journal of Computational Physics},
    volume = {475},
    year = {2023},
    eid = {111872},
    issn = {0021-9991},
    mrclass = {82C10 (65M06 70H03)},
    mrnumber = {4530846},
    doi = {10.1016/j.jcp.2022.111872},
}

@article{ChenLLZ24,
    author = {Chen, Z. and Lu, J. and Lu, Y. and Zhang, X.},
    title = {On the Convergence of {S}obolev Gradient Flow for the {G}ross--{P}itaevskii Eigenvalue Problem},
    journal = {SIAM J. Numer. Anal.},
    volume = {62},
    number = {2},
    pages = {667-691},
    year = {2024},
    doi = {10.1137/23M1552553},
}

@book{Con10,
    author = {Conway, J.B.},
    title = {A Course in Functional Analysis},
    series = {Graduate Texts in Mathematics},
    volume = {96},
    edition = {2nd},
    publisher = {Springer Science+Business Media},
    address = {New York, NY},
    year = {2010},    
}

@article{DanK10,
    author = {Danaila, I. and Kazemi, P.},
    title = {A new {S}obolev gradient method for direct minimization of the {G}ross–{P}itaevskii energy with rotation},
    journal = {SIAM J. Sci. Comput.},
    fjournal = {SIAM Journal on Scientific Computing},
    volume = {32},
    year = {2010},
    number = {5},
    pages = {2447-2467},
    doi = {10.1137/100782115},
}

@article{DanP17,
    author = {Danaila, I. and Protas, B.},
    title = {Computation of ground states of the {G}ross--{P}itaevskii functional via {R}iemannian optimization},
    journal = {SIAM J. Sci. Comput.},
    fjournal = {SIAM Journal on Scientific Computing},
    volume = {39},
    year = {2017},
    number = {6},
    pages = {B1102--B1129},
    issn = {1064-8275},
    doi = {10.1137/17M1121974},
}

@article {DoeH23,
    title = {Uniform {$L^\infty$}-bounds for energy-conserving higher-order time integrators for the {G}ross-{P}itaevskii equation with rotation}, 
    author = {D\"oding, C. and Henning, P.},
    journal = {IMA J. Numer. Anal.},
    fjournal = {IMA Journal of Numerical Analysis},
    volume = {44},
    year = {2023},
    number = {5},
    pages = {2892--2935},
    doi = {10.1093/imanum/drad081},
}

@article{FenT25,
    title = {On preconditioned {R}iemannian gradient methods for minimizing the {G}ross-{P}itaevskii energy functional: algorithms, global convergence and optimal local convergence rate},
    author = {Feng, Z. and Tang, Q.},
    journal = {arXiv preprint arXiv:2510.13516},
    year = {2025},
    doi = {10.48550/arXiv.2510.13516},
}

@article{GaoPY25,
    author = {Gao, B. and Peng, R. and Yuan, Y.},
    title = {Optimization on product manifolds under a preconditioned metric},
    journal = {SIAM J. Matrix Anal. Appl.},
    volume = {46},
    number = {3},
    pages = {1816--1845},
    year = {2025},
    doi = {10.1137/24M1643773},
}

@article{HHSW24,
    title = {Gradient flow finite element discretisations with energy-based $hp$-adaptivity for the {G}ross-{P}itaevskii equation with angular momentum rotation}, 
    author = {Heid, P. and Houston, P. and Stamm, B. and Wihler, T.P.},
    year = {2024},
    journal = {{ArXiv e-print 2412.17680}},
    doi = {10.48550/arXiv.2412.17680},
}

@article{Hen23,
    author = {Henning, P.},
    title = {The dependency of spectral gaps on the convergence of the inverse iteration for a~nonlinear eigenvector problem}, 
    journal = {Math. Models Methods Appl. Sci.},
    fjournal = {Mathematical Models and Methods in Applied Sciences},
    volume = {33},
    number = {7},
    pages = {1517--1544},
    year = {2023},
    doi = {10.1142/S0218202523500343},
}

@article{HenP20,
    author = {Henning, P. and Peterseim, D.},
    title = {Sobolev gradient flow for the {G}ross--{P}itaevskii eigenvalue problem: global convergence and computational efficiency},
    journal = {SIAM J. Numer. Anal.},
    volume = {58},
    year = {2020},
    number = {3},
    pages = {1744--1772},
    doi = {10.1137/18M1230463},
}

@article{HenY25,
    author = {Henning, P. and Yadav, M.},
    title = {Convergence of a {R}iemannian gradient method for the {G}ross--{P}itaevskii energy functional in a rotating frame},
    journal = {ESAIM Math. Model. Numer. Anal.},
    volume = {59},
    number = {},
    pages = {1145-1175},
    year = {2025},
    doi = {10.1051/m2an/2025018},
}

@article{HuaY24,
    author = {Huang, P. and Yang, Q.},
    title = {Newton-based alternating methods for the ground state of a class of multicomponent {B}ose–{E}instein condensates},
    journal = {SIAM J. Optim.},
    volume = {34},
    number = {3},
    pages = {3136-3162},
    year = {2024},
    doi = {10.1137/23M1580346},
}

@article{KaE10,
    author = {Kazemi, P. and Eckart, M.},
    title = {Minimizing the {G}ross--{P}itaevskii energy functional with the {S}obolev gradient -- analytical and numerical results},
    journal = {Int. J. Comput. Methods},
    fjournal = {International Journal of Computational Methods},
    volume = {7},
    year = {2010},
    number = {3},
    pages = {453--475},
    issn = {0219-8762}, 
    doi = {10.1142/S0219876210002301},
}

@Book{Lee13,
    author    = {Lee, J.M.},
    title     = {Introduction to Smooth Manifolds},
    edition   = {2nd},
    publisher = {Springer Science+Business Media},
    address   = {New York, NY},
    year      = {2013},
}

@book{LieL01,
    author = {Lieb, E.H. and Loss, M.},
    title = {Analysis},
    series = {Graduate Studies in Mathematics},
    volume = {14},
    edition = {2nd},
    publisher = {American Mathematical Society},
    address = {Providence},
    year = {2001},
    doi = {10.1090/gsm/014},
}

@inproceedings{MalM20,
    author = {Malitsky, Y. and Mishchenko, K.},
    title = {Adaptive gradient descent without descent},
    year = {2020},
    booktitle = {Proceedings of the 37th International Conference on Machine Learning},
    eid = {622},
    numpages = {11},
    pages = {6702--6712},
    url = {https://dl.acm.org/doi/10.5555/3524938.3525560},
    doi = {},
}

@article{MisS16,
    author = {Mishra, B. and Sepulchre, R.},
    title = {Riemannian Preconditioning},
    journal = {SIAM J. Optim.},
    volume = {26},
    number = {1},
    pages = {635--660},
    year = {2016},
    doi = {10.1137/140970860},
}

@article{MyaBGCW97,
    title = {Production of two overlapping {B}ose-{E}instein condensates by sympathetic cooling},
    author = {Myatt, C. J. and Burt, E. A. and Ghrist, R. W. and Cornell, E. A. and Wieman, C. E.},
    journal = {Phys. Rev. Lett.},
    volume = {78},
    issue = {4},
    pages = {586--589},
    year = {1997},
    doi = {10.1103/PhysRevLett.78.586},
}

@book{Ost66,
    author = {A. M. Ostrowski},
    title = {Solution of Equations and Systems of Equations},
    series = {Pure and Applied Mathematics},
    volume = {9},
    edition = {2nd},
    publisher = {Academic Press},
    address = {New York-London},
    year = {1966},
}

@Book{SautS11,
    author = {Sauter, S.A. and Schwab, C.},
    title = {Boundary Element Methods},
    publisher = {Springer-Verlag},
    address = {Berlin, Heidelberg},
    year = {2011},
}

@article{Shi81,
    author = {Shih, M. H.},
    fjournal = {Japan Academy. Proceedings. Series A. Mathematical Sciences},
    issn = {0386-2194},
    journal = {Proc. Japan Acad. Ser. A Math. Sci.},
    mrclass = {58C15 (47H15 65J15 90C48)},
    mrnumber = {618084},
    mrreviewer = {Sadayuki Yamamuro},
    number = {3},
    pages = {168--170},
    title = {A further generalization of the {O}strowski theorem in {B}anach spaces},
    doi = {10.3792/pjaa.57.168},
    volume = {57},
    year = {1981},
    Bdsk-Url-1 = {http://projecteuclid.org/euclid.pja/1195516492}
}

@incollection{Spe15,
    author = {Spence, E.A.},
    title = {Overview of Variational Formulations for Linear Elliptic {PDE}s},
    booktitle = {Unified Transform for Boundary Value Problems: Applications and Advances},
    editor  = {Fokas, A. S. and Pelloni, B.},
    pages = {93-159},
    publisher = {SIAM},
    address = {Philadelphia, PA},
    year = {2015},
    doi = {10.1137/1.9781611973822.ch6},
}

@article{SteISMCK98,
    title = {Spin domains in ground-state {B}ose-{E}instein condensates},
    author = {Stenger, J. and Inouye, S. and Stamper-Kurn, D. M. and Miesner, H.-J. and Chikkatur, A. P. and Ketterle, W.},
    journal = {Nature},
    volume = {396},
    issue = {},
    pages = {345–348},
    year = {1998},
    doi = {10.1038/24567},
}

@article{ZhaDTDCZ14,
    author = {Zhang, X.-F. and Du, Z.-J. and Tan, R.-B. and Dong, R.-F. and Chang, H. and Zhang, S.-G.},
    title = {Vortices in a rotating two-component {B}ose--{E}instein condensate with tunable interactions and harmonic potential},
    journal = {Annals Phys.},
    volume = {346},
    pages = {154-163},
    year = {2014},
    doi = {10.1016/j.aop.2014.04.015},
}
\end{document}